\title[Measure solutions of the Boltzmann equation, part I]{On Measure
  Solutions of the Boltzmann Equation, part I: Moment Production and
  Stability Estimates}
\author{Xuguang Lu and Cl\'{e}ment Mouhot}
\def\signxl{\bigskip \begin{center} {\sc Xuguang Lu\par\vspace{3mm}
Tsinghua University\par
Department of Mathematical Sciences\par
Beijing 100084, P.R.,
CHINA\par\vspace{3mm}
e-mail:} \tt{xglu@math.tsinghua.edu.cn} \end{center}}
\def\signcm{\bigskip \begin{center} {\sc
Cl\'ement Mouhot\par\vspace{3mm}
University of Cambridge\par
DPMMS, Centre for Mathematical Sciences\par
Wilberforce Road,
Cambridge CB3 0WA,
UK\par\vspace{3mm}
e-mail:} \tt{C.Mouhot@dpmms.cam.ac.uk} \end{center}}
\begin{document}
\maketitle                            

\newcommand{\p}{\partial}
\newcommand{\og}{\omega}
\newcommand{\Og}{\Omega}
\newcommand{\dt}{\delta}
\newcommand{\Dt}{\Delta}
\newcommand{\ld}{\lambda}
\newcommand{\Ld}{\Lambda}
\newcommand{\Gm}{\Gamma}
\newcommand{\gm}{\gamma}
\newcommand{\vp}{\varphi}
\newcommand{\vep}{\varepsilon}
\newcommand{\ep}{\epsilon}
\newcommand{\vh}{\varrho}
\newcommand{\vap}{\varphi}
\newcommand{\kp}{\eta}
\newcommand{\Sg}{\Sigma}
\newcommand{\fa}{\forall}
\newcommand{\fr}{\frac}
\newcommand{\sg}{\sigma}
\newcommand{\ept}{\emptyset}
\newcommand{\btd}{\nabla}
\newcommand{\btu}{\bigtriangleup}
\newcommand{\tg}{\triangle}
\newcommand{\la}{\langle}
\newcommand{\ra}{\rangle}
\newcommand{\Th}{{\cal T}^h}
\newcommand{\wt}{\widetilde}
\newcommand{\ul}{\underline}
\newcommand{\Ups}{\Upsilon}
\newcommand{\be}{\begin{equation}}
\newcommand{\ee}{\end{equation}}
\newcommand{\ba}{\begin{array}}
\newcommand{\ea}{\end{array}}
\newcommand{\bea}{\begin{eqnarray}}
\newcommand{\eea}{\end{eqnarray}}
\newcommand{\beas}{\begin{eqnarray*}}
\newcommand{\eeas}{\end{eqnarray*}}
\newcommand{\dpm}{\displaystyle }
\newcommand{\intt}{\int\!\!\!\!\int}
\newcommand{\inttt}{\int\!\!\!\!\int\!\!\!\!\int}
\newcommand{\intttt}{\int\!\!\!\!\int\!\!\!\!\int\!\!\!\!\int}

\theoremstyle{theorem}
\newtheorem{theorem}{Theorem}
\newtheorem{lemma}[theorem]{Lemma}
\newtheorem{conjecture}[theorem]{Conjecture}
\newtheorem{corollary}[theorem]{Corollary}
\newtheorem{proposition}[theorem]{Proposition}
\theoremstyle{definition}
\newtheorem{definition}[theorem]{Definition}

\def\theThm{{\arabic{section}.\arabic{theorem}}}
\numberwithin{equation}{section}
\numberwithin{theorem}{section}

\theoremstyle{remark}
\newtheorem{remark}[theorem]{Remark}
\newtheorem{remarks}[theorem]{Remarks}
\newtheorem{examples}[theorem]{Examples}
\newtheorem{example}[theorem]{Example}


\begin{abstract}
  The spatially homogeneous Boltzmann equation with hard potentials is
  considered for measure valued initial data having finite mass and
  energy. We prove the existence of \emph{weak measure solutions},
  with and without angular cutoff on the collision kernel; the proof
  in particular makes use of an approximation argument based on the
  \emph{Mehler transform}. Moment production estimates in the usual
  form and in the exponential form are obtained for these
  solutions. Finally for the Grad angular cutoff, we also establish
  uniqueness and strong stability estimate on these solutions.
\end{abstract}

\bigskip

{\bf Mathematics Subject Classification (2000)}: 35Q Equations of
mathematical physics and other areas of application [See also 35J05,
35J10, 35K05, 35L05], 76P05 Rarefied gas flows, Boltzmann equation
[See also 82B40, 82C40, 82D05].

\bigskip

{\bf Keywords}: Boltzmann equation; spatially homogeneous; hard
potentials; hard spheres; long-range interactions; measure solution;
moment estimate; moment production; exponential moment; stability
estimate; Mehler transform.

\tableofcontents

\section {Introduction}
\label{sec1}

In this paper we study the spatially homogeneous Boltzmann equation
for hard interaction potentials with or without angular cutoff.
The initial data are assumed to be positive Borel measures having
finite moments up to order $2$. Our main results are the existence and
stability of measure solutions that have polynomial and exponential
moment production properties.

\subsection{The spatially homogeneous Boltzmann equation}

\subsubsection{The equation}
\label{sec:equation}

Before introducing the main results, let us recall the Boltzmann
equation for $L^1$-solutions and basic notations. The equation for the
space homogeneous solution takes the form
\begin{equation}\label{(B)}
\fr{\p }{\p t}f_t(v)=Q(f_t,f_t)(v)\,, \quad (v,t)\in
\mathbb{R}^N\times(0,\infty)\,,\quad N\ge 2
\end{equation}
with some given initial data $f_t(v)|_{t=0}=f_0(v)$
 and $Q$ is the collision integral defined by
\begin{equation}\label{(1.1)}
Q(f,f)(v)=\intt_{\mathbb{R}^N\times
\mathbb{S}^{N-1}}B(v-v_*,\sg)\Big(f(v')f(v_*')-f(v)f(v_*)\Big)
{\rm d}\sg {\rm d}v_*\,,
\end{equation}
where $v,v_*$ and $v', v_*'$ stand for velocities of two particles
respectively after and before their collision,
\begin{equation}\label{(1.v)} v'=\fr{v+v_*}{2}+\fr{|v-v_*|}{2}\sg\,,\quad
v_*'=\fr{v+v_*}{2}-\fr{|v-v_*|}{2}\sg\,,\quad \sg \in
\mathbb{S}^{N-1}\,.\end{equation} The above relation between $v,v_*$ and $v',
v_*'$ shows that the collision is elastic:
$$v'+v_*'=v+v_*\,,\quad |v'|^2+|v_*'|^2 = |v|^2 + |v_*|^2\,.$$

\subsubsection{The collision kernel}
\label{sec:collision-kernel} The collision kernel $B(z,\sg)$ under
consideration is assumed to be a function of
$(|z|,\fr{z}{|z|}\cdot\sg)$, i.e.
\begin{equation}\label{(1.B)} B(z,\sg)=\bar{B}(|z|,\cos\theta),\quad
\cos\theta=\fr{z}{|z|}\cdot\sg,\quad \theta\in[0,\pi]
\end{equation}
where  $(r,t)\mapsto \bar{B}(r,t)$ is a non-negative Borel function on
$[0,\infty)\times[-1,1]$ satisfying
\begin{equation}\label{(1.2)}\forall\, t \in (-1,1),\quad
r\mapsto \bar{B}(r, t)\,\,\,{\rm is\,\,\, continuous\,\,\,on}\,\,\,
[0,\infty),
\end{equation}
\begin{equation}\label{(1.3)}
 \bar B(r,t)\le (1+r^2)^{\gm/2} b(t),\quad 0<\gamma \le 2 .
\end{equation}

In this paper most of the results are concerned with the case
\begin{equation}\label{(1.4)}
B(z,\sg)=|z|^{\gm}b(\cos\theta),\quad 0<\gamma\le 2
\end{equation}
which corresponds to the so-called hard potential molecular
interactions.

The function $t\mapsto b(t)$ in \eqref{(1.3)}-\eqref{(1.4)} has some  weighted integrability. We shall
consider several options for the assumptions on $b(\cdot)$. Our strongest assumption
is that $b(\cdot)$ as a function of $\sigma$ is integrable on the sphere
$\mathbb{S}^{N-1}$, which means
\[
\int_{0}^{\pi}b(\cos\theta)\sin^{N-2}\theta\,{\rm d}\theta<\infty
\]
which is the Grad's angular cutoff.  However more singular situations can
be considered. The minimal assumption is that $b(\cos \theta) \sin^2 \theta$
is integrable on the sphere as a function of $\sigma$ (this
corresponds physically to an angular momentum), i.e.
\[
\int_{0}^{\pi}b(\cos\theta)\sin^{N}\theta\,{\rm d}\theta < \infty\,.
\]

In dimension $N=3$, it is well known that for the hard spheres model
the function $b(\cdot)$ is constant, whereas for hard potential models (without
angular cutoff), there is only weighted integrability:
\[
\int_{0}^{\pi}b(\cos\theta)\sin \theta\,{\rm d}\theta=\infty,\quad
\int_{0}^{\pi}b(\cos\theta)\sin^{2} \theta\,{\rm d}\theta<\infty\,.
\]
More precisely, given an interaction potential$\phi(r) = C \, r^{1-s}$ for
$C>0$ and $s>3$, we obtain the following formula
from the physics literature \cite{Ce88} in dimension $N=3$:
\begin{eqnarray*}&&
B(z,\sg) = |z|^{\gm} b(\cos\theta),\qquad  \gm=\frac{s-5}{s-1}\,;
\\
&&
b(\cos \theta) \sin\theta \sim C' \, \theta^{-1-\frac{2}{s-1}} \quad ( \theta\to 0^+ )
\end{eqnarray*}
for some constant $C'>0$, and hard potential interactions correspond
to $s>5$.

In this paper we consider the following different assumptions:
\begin{eqnarray*}
&{\bf (H0)} \quad &0<\gm\le 2\,,\quad A_2:=\left|\mathbb{S}^{N-2}\right|\int_{0}^{\pi}
b(\cos\theta)\sin^N\theta\,{\rm d}\theta<\infty, \\
&{\bf (H1)} \quad &0<\gm\le 2\,,\quad \int_{0}^{\pi}b(\cos\theta)\sin^N\theta\,(1+
|\log(\sin\theta)|){\rm d}\theta<\infty, \\
&{\bf (H2)} \quad &1<\gm<2\,,\quad \int_{0}^{\pi}b(\cos\theta)\sin^{N-2\nu}\theta\, {\rm
d}\theta<\infty\,,\quad \nu=2-2/\gm \in (0,1),\\
&{\bf (H3)} \quad &\gm=2\,,\,\,\, \exists \, p\in (1,\infty)\,\,\,\mbox{ s.t.
} \,\,\, \int_{0}^{\pi}[b(\cos\theta)]^{p}\sin^{N-2}\theta\,{\rm d}\theta
<\infty,\\
&{\bf (H4)} \quad & 0<\gm\le 2\,,\quad A_0:=\left|\mathbb{S}^{N-2}\right|\int_{0}^{\pi}
b(\cos\theta)\sin^{N-2}\theta\,{\rm d}\theta<\infty.
\end{eqnarray*}
Observe that ${\bf (H3)}|_b \Rightarrow {\bf (H4)}|_b \Rightarrow {\bf
  (H2)}|_b \Rightarrow {\bf (H1)}|_b \Rightarrow {\bf (H0)}|_b$, where
for instance ${\bf (H 3)}|_b$ denotes the assumption with respect to
$b(\cdot)$ in ${\bf (H3)}$. Note also that ${\bf (H3)}|_b$ and ${\bf
  (H4)}|_b$ corresponds to the angular cutoff case (short-range
interactions), whereas ${\bf(H0)}|_b, {\bf(H1)}|_b$ and ${\bf
  (H2)}|_b$ allow for non-locally integrable functions $b(\cdot)$ on
the sphere, i.e. non-cutoff cases (long-range interactions).

\subsubsection{Dual form of the collision operator}
\label{sec:dual-form-collision}

For any ${\bf n}\in\mathbb{S}^{N-1}$, let
\[\mathbb{S}^{N-2}({\bf n})=\{\og\in \mathbb{S}^{N-1}\,\,|\,\,\og\cdot {\bf n}=0\,\}\,
\quad (N\ge 3)\]
and in dimension $N=2$ let
\[
\mathbb{S}^{0}({\bf n})=\{ -{\bf n}^{\bot} \,,\,{\bf
n}^{\bot}\}\,\quad \mbox{where } {\bf n}^{\bot}\in \mathbb{S}^1 \mbox{
satisfies } {\bf n}^{\bot}\cdot {\bf n}=0\,.
\]
Then for any $g\in L^1(\mathbb{S}^{N-1})$ or $g\ge 0$ (measurable) on
$\mathbb{S}^{N-1}$ we have
$$\int_{\mathbb{S}^{N-1}}g(\sg){\rm d}\sg=\int_{0}^{\pi}\sin^{N-2}\theta
\left(\int_{\mathbb{S}^{N-2}({\bf n})}g(\cos\theta {\bf n}+\sin\theta\,\og){\rm
d}\og\right) {\rm d}\theta$$ where ${\rm d}\og$ is the Lebesgue spherical
measure on $\mathbb{S}^{N-2}({\bf n})$ and in case $N=2$ we define
$$\int_{\mathbb{S}^{0}({\bf n})}g(\og) {\rm d}\og=g(-{\bf n}^{\bot})+g({\bf
n}^{\bot})\,.$$ Let $|\mathbb{S}^{N-2}({\bf n})|=\int_{\mathbb{S}^{N-2}({\bf
n})}{\rm d}\og$, etc. Then $|\mathbb{S}^{N-2}({\bf n})|=|\mathbb{S}^{N-2}|$ for
$N\ge 3$, $|\mathbb{S}^{0}({\bf n})| =|\mathbb{S}^{0}|=2$ for $N=2$.

By classical calculation one has
\begin{multline}\label{(1.5)}\langle
 Q(f,g),\vp\rangle:=
\int_{\mathbb{R}^N}Q(f,g)(v)\vp(v){\rm d}v =\fr{1}{2}\intt_{\mathbb{R}^N
\times \mathbb{R}^N}L_B[\Dt\vp](v,v_*) f(v)g(v_*){\rm d}v{\rm d}v_*
\end{multline}
where
\[
\Dt\vp := \Dt\vp(v,v_*, v', v_*')=\vp(v')+\vp(v_*')-\vp(v)-\vp(v_*)\,,
\]
\begin{equation}\label{(1.6)}
L_B[\Delta \vp](v,v_*) :=
\int_{0}^{\pi}\bar{B}(|v-v_*|,\cos\theta)\sin^{N-2}\theta
\left(\int_{\mathbb{S}^{N-2} ({\bf n})} \Delta \vp \,{\rm d}\og\right){\rm
d}\theta
\end{equation}
and $\sg=\cos\theta\,{\bf n}+\sin\theta\,\og\,, {\bf
  n}=(v-v_*)/|v-v_*|$ for $v\neq v_*$; ${\bf n}={\bf
  e}_1=(1,0,\dots,0)$ for $v=v_*.$

Observe that when assuming one of the assumptions {\bf (H0)}, {\bf
  (H1)}, {\bf (H2)} (non-cutoff cases), the collision
operator in the dual form \eqref{(1.5)} above is well-defined thanks to the
cancellations in the symmetric difference $\Dt\vp$ of $\vp\in
C^2(\mathbb{R}^N)$. Basic estimates on $\Dt\vp$ are as follows (see for
instance \cite[Lemma~3.2]{MR2546739}): For all $(v,v_*,\sg)\in \mathbb{R}^N
\times \mathbb{R}^N \times \mathbb{S}^{N-1}$ one has
\begin{equation}\label{(1.7)}
|\Dt\vp|\le \sqrt{2}\left(\max_{|\xi|\le
\sqrt{|v|^2+|v_*|^2}}|\nabla\vp(\xi)|\right)|v-v_*|\sin\theta\,;
\end{equation}
\begin{equation}\label{(1.8)}
\left|\int_{\mathbb{S}^{N-2}({\bf n})}\Dt\vp \,{\rm d}\og\right|\le |{\mathbb
S}^{N-2}|\left(\max_{|\xi|\le
\sqrt{|v|^2+|v_*|^2}}|H_{\vp}(\xi)|\right)|v-v_*|^{2}\sin^2\theta\,,
\end{equation}
where $\nabla\vp$, $H_{\vp}$ are gradient and Hessian matrix of $\vp$.
Consequently the Boltzmann equation \eqref{(B)} in a weak form can be written
\begin{equation}\label{(1.9)}
\int_{\mathbb{R}^N}\vp(v)f_t(v){\rm d}v= \int_{\mathbb{R}^N}\vp(v)f_0(v){\rm
d}v+\int_{0}^{t} \langle Q(f_{\tau}, f_{\tau}),\vp\rangle
{\rm d}\tau \,.
\end{equation}
From the estimate \eqref{(1.8)} it is easily seen that if $A_2<\infty$
(minimal assumption) then $L_B[\Dt\vp]$ is well-defined for all
$\vp\in C^2(\mathbb{R}^N)$.

In fact we shall prove in Proposition~\ref{prop2.1} (see
Section~\ref{sec2}) that $(v,v_*)\mapsto L_B[\Dt\vp](v,v_*)$ is also
continuous on $\mathbb{R}^N \times \mathbb{R}^N$. Furthermore if
\[
\int_{0}^{\pi}b(\cos\theta)\sin^{N-1}\theta\,{\rm d}\theta<\infty
\]
then from the estimate \eqref{(1.7)} one sees that
\[
L_B[|\Dt\vp|](v,v_*)=\int_{\mathbb{S}^{N-1}}B(v-v_*,\sg)|\Dt\vp|\,{\rm
  d}\sg<\infty
\]
so that $L_B$ coincides with the simpler formula
\begin{equation}\label{(1.10)}
  L_B[\Dt\vp](v,v_*)=\int_{\mathbb{S}^{N-1}}B(v-v_*,\sg)\Dt\vp\,{\rm
    d}\sg\,.
\end{equation}

The collision integral \eqref{(1.5)} and the equation \eqref{(1.9)}
for $L^1$-functions are naturally extended to finite Borel
measures. For every $0\le s<\infty$, let ${\mathcal
  B}_s(\mathbb{R}^N)=({\mathcal B}_s(\mathbb{R}^N),\|\cdot\|_s)$ be
the Banach space of real Borel measures on $\mathbb{R}^N$ having
finite total variations up to order $s$, i.e.
$$\|\mu\|_s:=\int_{\mathbb{R}^N}\langle v\rangle^s {\rm d}|\mu|(v)<\infty,\quad
\langle v\rangle:=(1+|v|^2)^{1/2}$$ where the positive Borel measure $|\mu|$
is the total variation of $\mu$. In particular
$\|\mu\|=\|\mu\|_0=|\mu|(\mathbb{R}^N)$ is simply the total variation of
$\mu$.  Let
$${\mathcal B}_s^+(\mathbb{R}^N)
=\left\{ \mu\in {\mathcal B}_s(\mathbb{R}^N)\,|\,\, \mu\ge 0\right\}\,.$$ In
accordance with \eqref{(1.5)} we now define for every $\mu,\nu\in{\mathcal
  B}_s(\mathbb{R}^N)$ and every suitable smooth function $\vp$
\begin{equation}\label{(1.11)}
 \langle Q(\mu,\nu),\,\vp\rangle:=\fr{1}{2}
\intt_{\mathbb{R}^N \times \mathbb{R}^N}L_B[\Dt\vp](v,v_*) {\rm
  d}\mu(v){\rm d}\nu(v_*)\,.
\end{equation}
Our test function space for defining measure weak solutions is chosen $C^2
_b(\mathbb{R}^N) $, where
$$
C^k_b(\mathbb{R}^N)=\left\{\vp\in C^k(\mathbb{R}^N)\,\Big |\,\,\sum_{|\alpha
|\le k}
  \sup_{v\in\mathbb{R}^N}|\p ^{\alpha}\vp(v)|<\infty\,\right\}\,.$$
Finally by analogy with ${\mathcal B}_s(\mathbb{R}^N)$ we introduce the class
$L^{\infty}_{-s}(\mathbb{R}^N)$ of {\it locally bounded} Borel functions such
that
$$\psi\in L^{\infty}_{-s}(\mathbb{R}^N)\,\Longleftrightarrow\,
\|\psi\|_{L^{\infty}_{-s}}:=\sup_{v\in\mathbb{R}^N}|\psi(v)|\langle
v\rangle^{-s}<\infty$$ and we define
$$L^{\infty}_{-s}\cap C^k(\mathbb{R}^N)=\left\{\vp\in C^k(\mathbb{R}^N)\,\Big|\,\,\sum_{|\alpha |\le k}
\|\p ^{\alpha}\vp\|_{L^{\infty}_{-s}}<\infty\,\right\}\,,\quad s\ge
0, \ k \in \mathbb{N}\,.$$

\subsection{Previous results and references}

Let us give a short (and non-exhaustive) overview of the main
previous results and references related to the subject of this paper.

\subsubsection{Cauchy theory for the spatially homogeneous Boltzmann
  equation for hard potentials with cutoff}
The first rigorous mathematical result is due to Carleman
\cite{MR1555365,Carleman} who proved existence and uniqueness of
solutions in $L^1 \cap L^\infty$ with pointwise moment bounds, for
hard spheres interactions. A general Cauchy theory was later
developed by Arkeryd \cite{MR0339665,MR0339666} who proved existence
and uniqueness of solutions in $L^1 \cap L \log L$ with $L^1$ moment
bounds. More recently optimal results were obtained by Mischler and
Wennberg \cite{MR1697562} (see also Lu \cite{MR1716814}), and we refer
to the references therein for a more extensive bibliography.  \medskip

\subsubsection{Cauchy theory for the spatially homogeneous Boltzmann
  equation for hard potentials without cutoff}
This theory is much more recent, and not complete at now. As far as
existence of solutions is concerned let us mention the seminal works
of Villani \cite{MR1650006} and then Alexandre and Villani
\cite{MR1857879}. As far as uniqueness of solutions is concerned (in
the general far from equilibrium regime), let us mention the works
\cite{MR1675367,MR2283785,MR2511651,MR2398952} based on Wasserstein
metrics and probabilistic tools, and the work \cite{MR2525118} based
on \textit{a priori} estimates. Finally let us mention the related
recent works in the {\em perturbative close-to-equilibrium regime}
(but without assuming spatial homogeneity) of Gressman and Strain
\cite{MR2629879} on the one hand, and Alexandre, Morimoto, Ukai, Xu,
Yang \cite{MR2677982} on other hand.  \medskip

\subsubsection{Polynomial moment bounds}
The first seminal result of the propagation of polynomial moments that
exists initially for ``variable hard spheres'' (hard potentials with
angular cutoff) is due to Elmroth \cite{MR684411} and makes use of
so-called ``Povzner's inequalities'' \cite{MR0142362}. Then
Desvillettes \cite{MR1233644} proved, for the same model, the
appearance of any polynomial as soon as a moment of order strictly
higher than $2$ exists initially (see also \cite{MR1264851}). Finally
optimal results were obtained in \cite{MR1697562} again.

\subsubsection{Exponential moment bounds}
The first seminal result of propagation of moments of exponential form is due
to Bobylev \cite{MR1478067}, still in the case of short-ranged interactions.
Significant improvements of these results were later obtained in
\cite{MR2096050}. Let us also mention the related result of propagation of
pointwise Maxwellian bound in \cite{MR2533928}. Inspired by the same
techniques, the appearance of exponential moments was first obtained by the
second author together with Mischler in \cite{MR2264623,Mcmp}, see
also the recent work \cite{ACGM}.

\subsection{Definitions of measure solutions}

Let us start with a notion of \emph{measure weak solutions}, where the
time evolution is defined in the integral sense.

\begin{definition}[Measure weak solutions]\label{def:measure-weak}
  Let $B(z,\sg)$ be given by  \eqref{(1.B)}-\eqref{(1.2)}-\eqref{(1.3)}
  with $\gm$ and $b(\cdot)$ satisfying {\bf (H0)}.
Let
  $F_0\in {\mathcal B}_2^{+}(\mathbb{R}^N)$ and $\{F_t\}_{t\ge
    0}\subset {\mathcal B}_2^{+}(\mathbb{R}^N)$. We say that
  $\{F_t\}_{t\ge 0}$, or simply $F_t$, is a \textbf{measure weak
    solution} of Eq.~\eqref{(B)} associated with the initial datum
  $F_0$, if it satisfies the following (i)-(ii):
  \begin{itemize}
  \item[(i)] $\sup\limits_{t\ge 0}\|F_t\|_2<\infty\,.$
\item[(ii)] For every $\vp\in C^2_b(\mathbb{R}^N)$,
\[
\left\{
\begin{array}{l}\displaystyle
\intt_{\mathbb{R}^N \times \mathbb{R}^N}|L_B[\Dt\vp](v,v_*)| {\rm
d}F_{t}(v){\rm d}F_{t}(v_*) <\infty\,,\quad \forall\, t>0 \vspace{0.3cm} \\
\displaystyle t\mapsto \langle Q(F_t, F_t),\,\vp\rangle \quad {\rm belongs
\,\,to}\quad C((0,\infty))\cap L^1_{{\rm loc}}([0,\infty)) \vspace{0.3cm} \\
\displaystyle \int_{\mathbb{R}^N}\vp(v){\rm d}F_t(v)=
\int_{\mathbb{R}^N}\vp(v){\rm d}F_0(v)+\int_{0}^{t}\langle Q(F_{\tau},
F_{\tau}),\,\vp\rangle {\rm d}\tau \quad \forall\, t\ge 0\,.$$
\end{array}
\right.
\]
\end{itemize}

Moreover a measure weak solution $F_t$ is called a
\textbf{conservative solution} if it conserves the mass, momentum and
energy, i.e.
$$\int_{\mathbb{R}^N}\left(\begin{array}{lll} 1
    \\
    v\\
    |v|^2\end{array}\right){\rm d}F_t(v)= \int_{\mathbb{R}^N}\left(\begin{array}{lll} 1
    \\
    v\\
    |v|^2\end{array}\right){\rm d}F_0(v)\qquad \forall\, t\ge 0\,.$$
\end{definition}

Note that every measure weak solution conserves the mass because the
constant $\vp=1$ belongs to $C^2_b(\mathbb{R}^N)$ and $\Dt\vp=0$. The
conservations of the momentum and energy are formally true since one
also has $\Dt\vp=0$ for $\vp=v_j$, $j=1,2,\dots, N$ and $\vp = |v|^2$,
but these $\vp$ do not belong to $C^2_b(\mathbb{R}^N)$. In fact under
the assumption {\bf (H1)}, one can follow the same argument in
\cite{MR1893976} to construct a weak solution of Eq.~\eqref{(B)} such
that the energy is increasing.

Now let us consider a stronger notion of \emph{measure strong
  solutions}
under the angular cutoff assumption {\bf (H4)}.  Let $B(z,\sg)$ be given by
\eqref{(1.B)}-\eqref{(1.2)}-\eqref{(1.3)} with $b(\cdot)$ satisfying $A_0<\infty$. Then we can
define bilinear operators (see Proposition \ref{prop1.4} below)
\[
Q^{\pm}: {\mathcal B}_{s+\gm}(\mathbb{R}^N)\times{\mathcal
  B}_{s+\gm}(\mathbb{R}^N)\to {\mathcal B}_s(\mathbb{R}^N)
\quad  (s\ge 0)
\]
and
\begin{equation}\label{(1.14)}
Q(\mu,\nu):=Q^{+}(\mu,\nu)-Q^{-}(\mu,\nu)
\end{equation}
through Riesz's representation theorem by
\begin{equation}\label{(1.15)}
\int_{\mathbb{R}^N}\psi(v){\rm d}Q^{+}(\mu,\nu)(v) =
\intt_{\mathbb{R}^N \times \mathbb{R}^N}L_B[\psi](v,v_*)
{\rm d}\mu(v){\rm d}\nu(v_*)\,,
\end{equation}
\begin{equation}\label{(1.16)}
\int_{\mathbb{R}^N}\psi(v){\rm d}Q^{-}(\mu,\nu)(v) = \intt_{\mathbb{R}^N
\times \mathbb{R}^N} A(v-v_*)\psi(v)d \mu(v){\rm d}\nu(v_*)
\end{equation}
for all $\psi\in L^{\infty}_{-s}\cap C(\mathbb{R}^N)$, where
\begin{equation}\label{(1.17)}
L_B[\psi](v,v_*)=\int_{\mathbb{S}^{N-1}}B(v-v_*,\sg) \psi(v')\,{\rm
d}\sg,\quad A(z) =\int_{{\mathbb S}^{N-1}}B(z,\sg){\rm d}\sg
\end{equation}
and recall that ${\bf n} =
(v-v_*)/|v-v_*|$ in $b({\bf n}\cdot \sg)$ is replaced
by a fixed unit vector ${\bf e}_1$ for $v=v_*$.

Recall that the norm $\|\mu\|_s$ of $\mu\in {\mathcal
  B}_s(\mathbb{R}^N)\,(s\ge 0)$ can be estimated in terms of compactly
smooth test functions: For all $k\ge 0$
\begin{equation}
\|\mu\|_s=\sup_{\vp\in C_c^{k}(\mathbb{R}^N),\, \|\vp\|_{L^{\infty}_{-s}}\le
1}\Big|\int_{\mathbb{R}^N}\vp{\rm
d}\mu\Big|\,.
\label{(5.1)}
\end{equation}

We are now ready for stating the definition of \emph{measure strong
solutions}, for which some time-differentiability is assumed in total
variation topology.

\begin{definition}[Measure strong solutions] Let $B(z,\sg)$ be given
  by \eqref{(1.B)}-\eqref{(1.2)}-\eqref{(1.3)} with $\gm$ and
  $b(\cdot)$ satisfying {\bf (H4)}. Let $F_0\in {\mathcal
    B}_2^{+}(\mathbb{R}^N)$ and $\{F_t\}_{t\ge 0}\subset {\mathcal
    B}_2^{+}(\mathbb{R}^N)$. We say that $F_t$ is a {\bf measure strong
  solution} of Eq.{\rm \eqref{(B)}} associated with the initial datum
  $F_{t}|_{t=0}=F_0$, if it satisfies the following (i)-(ii):
\begin{itemize}
\item[(i)] $\sup\limits_{t\ge 0}\|F_t\|_2<\infty\,.$

\item[(ii)] $t\mapsto F_t\in C([0,\infty);{\mathcal B}_2(\mathbb{R}^N))\cap
C^1([0,\infty);{\mathcal B}_0(\mathbb{R}^N))$ and
\begin{equation}\label{def-strong}
\fr{{\rm d}}{{\rm d}t}F_t=Q(F_t,F_t)\,,\quad
t\in[0,\infty)\,.
\end{equation}
\end{itemize}
\end{definition}

Note that from \eqref{(5.9)}-\eqref{(5.10)}-\eqref{(5.11)} in
Proposition~\ref{prop1.4}  the strong continuity of
\[
t\mapsto F_t\in C([0,\infty);{\mathcal B}_2(\mathbb{R}^N))
\]
implies the strong continuity $t\mapsto Q(F_t,F_t) \in C([0,\infty);{\mathcal
B}_0(\mathbb{R}^N))$, so that the differential equation \eqref{def-strong} is
equivalent to the integral equation
\begin{equation}\label{def-strong-bis}
F_t=F_0+\int_{0}^{t}Q(F_s, F_s){\rm d}s\,,\quad t\ge 0
\end{equation}
where the integral is taken in the Riemann sense or generally in the
Bochner sense.  Recall also that here the derivative $\fr{{\rm d}
}{{\rm d}t}\mu_t$ and integral $\int_{a}^{b}\nu_t{\rm d}t$ as measures
are defined by
$$\left(\fr{{\rm d} }{{\rm d}t}\mu_t\right)(E)=\fr{{\rm d}}{{\rm d}t} \mu_t(E)\,,\quad
\left(\int_{a}^{b}\nu_t {\rm d}t\right)(E)= \int_{a}^{b}\nu_t (E) {\rm d}t$$ for
all Borel sets $E\subset \mathbb{R}^N$.

Note also that if a strong measure solution $F_t$ is absolutely continuous
with respect to the Lebesgue measure for all $t\ge 0$, i.e. ${\rm
d}F_t(v)=f_t(v){\rm d}v$, then it is easily seen that $f_t$ (after
modification on a $v$-null set) is a mild solution of Eq.~\eqref{(B)}. That
is, $(t,v)\mapsto f_t(v)$ is nonnegative and Lebesgue measurable on
$[0,\infty)\times\mathbb{R}^N$ and for every $t\ge 0$, $v\mapsto f_t(v)$
belongs to $L^1_2(\mathbb{R}^N)$, $\sup_{t\ge 0}\|f_t\|_{L^1_2}<\infty$, and
there is a Lebesgue null set $Z_0\subset \mathbb{R}^N$ (which is independent
of $t$) such that
\begin{equation*}
\left\{
\begin{array}{l}\displaystyle
\int_{0}^{t}Q^{\pm}(f_{\tau},f_{\tau})(v){\rm d}\tau<\infty\quad
\forall\,t\in[0,\infty)\,,\quad \forall\, v\in \mathbb{R}^N\setminus
 Z_0\vspace{0.3cm} \\ \displaystyle
f_t(v)=f_0(v)+\int_{0}^{t}Q(f_{\tau},f_{\tau})(v){\rm d}\tau\,,\quad
\forall\,t\in[0,\infty) \,, \quad \forall\,v\in \mathbb{R}^N\setminus
 Z_0\,.
\end{array}
\right.
\end{equation*}
Here
$$L^1_s(\mathbb{R}^N)=\left\{\,f\in L^1(\mathbb{R}^N)\,|\,
  \|f\|_{L^1_s}:=\int_{\mathbb{R}^N}|f(v)|\langle v\rangle^s {\rm
    d}v<\infty\,\right\}\,,\quad s\ge 0\,.$$ From classical measure
theory \cite[Theorem 6.13, page 149]{MR0210528}: if ${\rm d}\mu(v)=f(v){\rm
d}v$ for $f\in L^1_s(\mathbb{R}^N)$, then ${\rm d}|\mu|(v)=|f(v)|{\rm d}v$ and
hence $\|\mu\|_s=\|f\|_{L^1_s}$.

For any positive measure $\mu\in {\mathcal B}_2^{+}(\mathbb{R}^N)$ we finally
introduce the following continuous function $r\mapsto\Psi_{\mu}(r)$ on
$[0,\infty)$:
\begin{equation}\label{(1.20)}
\Psi_{\mu}(r)=r+r^{1/3}+ \int_{|v|>r^{-1/3}}|v|^2{\rm d}\mu(v)\,,\quad
r>0 \quad \mbox{ with } \quad
\Psi_{\mu}(0)=0
\end{equation}
which quantifies the localization of the energy of $\mu$.

\subsection{Main results}
\label{sec:main-results}

Our first main result is the following

\begin{theorem}[Existence of solutions and moment production
  estimates without cutoff]\label{theo1}
  Suppose that $B(z,\sg)=|z|^{\gm}b(\cos\theta)$ satisfies {\bf
    (H1)}. Given any initial datum $F_0\in {\mathcal
    B}^{+}_2(\mathbb{R}^N)$ with $\|F_0\|_0\neq 0$, we have
\begin{itemize}
\item[(a)] The Eq.~\eqref{(B)} always has a conservative measure weak
  solution $F_t$ satisfying $F_t|_{t=0}=F_0$.

 \item[(b)] Let $F_t$ be a measure weak
  solution of Eq.~\eqref{(B)} associated with the initial datum $F_0$ satisfying
\begin{equation}\label{(1.E)}\|F_t\|_2\le
  \|F_0\|_2\quad\forall\, t>0;\quad
\sup\limits_{t\ge t_0}\|F_t\|_{s}<\infty \quad \forall \, t_0>0, \ \forall \,
s>2. \end{equation} Then $F_t$ is conservative, i.e. $F_t$ conserves the mass,
momentum, and energy.

\item[(c)] The Eq.~\eqref{(B)} always has a conservative measure weak
  solution $F_t$ with $F_t|_{t=0}=F_0$ which satisfies
the following moment production estimate:
\begin{equation}\label{(1.12)}
\|F_t\|_{s}\le {\mathcal K}_s(F_0) \left(1+\frac{1}{t}
\right)^{\fr{s-2}{\gm}}\quad \forall\,t>0\,,\quad \forall\,
s\ge 2
\end{equation}
where
\begin{equation}\label{(1.12*)}
{\mathcal K}_s(F_0)=\|F_0\|_2\left(2^{s+7}\fr{\|F_0\|_2}{\|F_0\|_0}
\left(1+\frac{1}{16\|F_0\|_2A_2\gm}\right)\right)^{\fr{s-2}{\gm}}\,.
\end{equation}

\item[(d)] If in addition either $0<\gm\le 1$ or one of the
  assumptions {\bf (H2)}, {\bf (H3)} is satisfied, then every solution
  $F_t$ in part (c) (or generally in part (b)) satisfies a moment
  production estimate of exponential form:
\begin{equation}\label{(1.13)}
 \int_{\mathbb{R}^N} e^{\alpha(t)\langle v\rangle ^{\gm}} {\rm d}F_t(v)\le
2\|F_0\|_0\qquad \forall\,t>0\,,
\end{equation}
where \begin{eqnarray*}&& \alpha(t)=2^{-s_0}
\fr{\|F_0\|_0}{\|F_0\|_2}(1-e^{-\beta t})\,,\quad \beta =16\|F_0\|_2A_2\gm>0
\end{eqnarray*} and $1<s_0<\infty$
depends only on $b(\cdot)$ and $\gm$.
\end{itemize}
\end{theorem}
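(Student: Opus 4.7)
The plan is to construct a solution by regularizing both the initial datum and the collision kernel, pass to the limit via weak-$*$ compactness, and then obtain the moment production estimates by Povzner-type calculations applied in the weak formulation.

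For part~(a), I would truncate the angular singularity of $b$ to $\theta\ge 1/n$, producing cutoff kernels $B_n$ satisfying \textbf{(H4)}, and in parallel regularize $F_0$ by its image $f_0^n$ under the \emph{Mehler transform} at parameter $1/n$; this semigroup produces a smooth $L^1$ density having the \emph{same} mass, momentum and energy as $F_0$ and converging weakly to $F_0$. The classical Cauchy theory for cutoff hard potentials yields a conservative $L^1$ solution $f_t^n$ of the regularized problem, with $\sup_t\|f_t^n\|_2=\|F_0\|_2$ by energy conservation. Estimate \eqref{(1.8)} provides uniform equicontinuity of $t\mapsto\langle f_t^n,\varphi\rangle$ for $\varphi\in C_b^2$, with constants depending only on $A_2$ and hence compatible with \textbf{(H1)}; tightness follows from the uniform second moment. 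The continuity statement of Proposition~\ref{prop2.1}, together with dominated convergence in $\sigma$ based on \textbf{(H1)}, lets one pass $n\to\infty$ inside the collision integral to obtain a conservative measure weak solution. For part~(b), apply the weak formulation to truncated test functions $\varphi_R(v)=\chi_R(v)v_j$ and $\chi_R(v)|v|^2$ with $\chi_R\in C_c^\infty$ equal to $1$ on $\{|v|\le R\}$; both lie in $C_b^2(\mathbb{R}^N)$. The symmetric difference $\Dt\varphi_R$ vanishes on $\{|v|,|v_*|\le R/2\}$ by elasticity, so only the tail $\{\max(|v|,|v_*|)>R/2\}$ contributes: for momentum this is controlled by $\|F_t\|_2$ and the $O(1/R)$ Hessian bound in \eqref{(1.8)}, while for energy one uses $\int_{|v|>R/2}|v|^2\,{\rm d}F_t\le (2/R)^{s-2}\|F_t\|_s$ together with the $s>2$ moment provided by \eqref{(1.E)}.

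Part~(c) is the technical core. The main tool is a Povzner-type pointwise bound for $\varphi_s(v)=\langle v\rangle^s$,
\[
L_B[\Dt\varphi_s](v,v_*)\le |v-v_*|^{\gm}\Bigl(-c_s\bigl(\langle v\rangle^s+\langle v_*\rangle^s\bigr)+K_s\bigl(\langle v\rangle\langle v_*\rangle^{s-1}+\langle v\rangle^{s-1}\langle v_*\rangle\bigr)\Bigr),
\]
with $s$-explicit constants. Inserting this into the dual identity \eqref{(1.11)} and using $|v-v_*|^{\gm}\ge\langle v\rangle^{\gm}-c\langle v_*\rangle^{\gm}$ yields
\[
\frac{\mathrm{d}}{\mathrm{d}t}\|F_t\|_s\le -A\,\|F_t\|_{s+\gm}\,\|F_0\|_0+B\,\|F_t\|_{s+\gm-1}\,\|F_0\|_2.
\]
Interpolating $\|F_t\|_{s+\gm-1}$ between $\|F_t\|_2$ and $\|F_t\|_{s+\gm}$, together with the Jensen lower bound $\|F_t\|_{s+\gm}\ge C\,\|F_t\|_s^{(s-2+\gm)/(s-2)}$ (coming from conservation of mass and energy), gives a Bernoulli-type differential inequality whose explicit solution yields \eqref{(1.12)} with the constant \eqref{(1.12*)}. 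The main obstacle is tracking the sharp $s$-dependence in the Povzner constants so that the ODE integrates with exponent $(s-2)/\gm$ in the prefactor; this rests on a careful angular integration on $\mathbb{S}^{N-2}({\bf n})$.

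Part~(d) then follows from a Bobylev-type combinatorial argument. Set $E_\alpha(t)=\sum_{n\ge 0}\frac{\alpha^n}{n!}\|F_t\|_{n\gm}$ and differentiate formally term by term, feeding in the level-$n$ version of the Povzner inequality from~(c). For $0<\gm\le 1$ the resulting series closes into a scalar differential inequality in $E_\alpha$ by elementary convexity; for $1<\gm\le 2$ the stronger angular assumption \textbf{(H2)} or \textbf{(H3)} is needed to keep the remainder series summable, with \textbf{(H3)} required in the endpoint $\gm=2$ where the Povzner gain degenerates. The time-dependent choice $\alpha(t)=2^{-s_0}(\|F_0\|_0/\|F_0\|_2)(1-e^{-\beta t})$ is calibrated so that $E_{\alpha(0)}(0)=\|F_0\|_0$ and that a Gr\"onwall argument produces $E_{\alpha(t)}(t)\le 2\|F_0\|_0$, i.e. \eqref{(1.13)}. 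The hardest single step overall is this combinatorial verification that the full remainder series has a positive radius of convergence uniformly in $t$, which is what forces the threshold behavior between $\gm\le 1$ and $\gm>1$ and the need for the integrability hypotheses \textbf{(H2)} and \textbf{(H3)}.
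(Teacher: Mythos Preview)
Your overall architecture matches the paper's: regularize the kernel and the datum, use known cutoff theory, pass to the limit by weak compactness, and derive moment estimates via Povzner plus a Bernoulli ODE. Parts~(b), (c), (d) are essentially on the right track, with the paper using an inductive comparison $Z_q(t)\le Y_q(t)$ on the normalized moments $Z_q=\|F_t\|_{\gamma q}/(\Gamma(q)\|F_0\|_0)$ rather than summing the exponential series directly, but the content is the same Bobylev mechanism you describe.

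There is, however, a genuine gap in your part~(a), and it is exactly the point where hypothesis \textbf{(H1)} (as opposed to the weaker \textbf{(H0)}) is actually used. You write that estimate \eqref{(1.8)} gives uniform equicontinuity of $t\mapsto\langle f_t^n,\varphi\rangle$ ``with constants depending only on $A_2$ and hence compatible with \textbf{(H1)}''. This is not correct: \eqref{(1.8)} yields
\[
|\langle Q(F^n_t,F^n_t),\varphi\rangle|\le C_\varphi A_2 \iint |v-v_*|^{2+\gm}\,{\rm d}F^n_t(v)\,{\rm d}F^n_t(v_*),
\]
which requires a $(2+\gm)$-moment of $F^n_t$. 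For the Mehler-regularized data this moment is finite for each $n$ but \emph{not uniformly in $n$} (since $F_0$ has only two moments); and if instead you appeal to moment production, you only get $\|F^n_t\|_{2+\gm}\le C(1+1/t)$, which is not integrable down to $t=0$. So your argument does not produce equicontinuity at $t=0$, and you cannot identify the limit's initial datum as $F_0$.

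The paper fixes this with a variable-exponent trick that is precisely where the logarithmic weight in \textbf{(H1)} enters. One writes, using \eqref{(1.8)} raised to the power $(2-\gm\lambda(\theta))/2$ with $\lambda(\theta)=(1+|\log\sin\theta|)^{-1}$,
\[
\Bigl|\int_{\mathbb{S}^{N-2}({\bf n})}\Dt\varphi\,{\rm d}\omega\Bigr|\le C_\varphi\,|v-v_*|^{2-\gm\lambda(\theta)}(\sin\theta)^{2-\gm\lambda(\theta)},
\]
so that after multiplying by $|v-v_*|^{\gm}b(\cos\theta)\sin^{N-2}\theta$ and integrating, the required moment of $F^n_t$ is only of order $2+\gm-\gm\lambda(\theta)$, and the uniform production estimate gives $\|F^n_t\|_{2+\gm-\gm\lambda(\theta)}\le C(1+1/t)^{1-\lambda(\theta)}$. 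Integrating in $t$ on $[0,\delta]$ then produces a factor $\delta^{\lambda(\theta)}/\lambda(\theta)=\delta^{\lambda(\theta)}(1+|\log\sin\theta|)$, and the $\theta$-integral is finite \emph{exactly} because of the extra $(1+|\log\sin\theta|)$ in \textbf{(H1)}. Without this device (or an equivalent one), the compactness step near $t=0$ fails; you should add it.
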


It is possible to deduce from the previous theorem some more
conventional moment estimates in exponential form where the constant
in the argument of the exponential moment remains time-dependent:
\begin{corollary}\label{corollary 1.4}
  Under the same assumptions on $B(z,\sg)$ and the initial datum $F_0$
  in Theorem~\ref{theo1}, there exists a conservative measure weak solution
  $F_t$ of Eq.~\eqref{(B)} such that for any $0<s<\gm$ and any $c>0$
  $$\int_{\mathbb{R}^N}
  e^{c\langle v\rangle^{s}} {\rm d}F_t(v)\le
  \left(e^{\alpha_s(t)}+2\right)\|F_0\|_0\qquad \forall\, t>0$$ where
$$\alpha_s(t)=c\left(\fr{c}{\alpha(t)}\right)^{\fr{s}{\gm-s}}\,.$$
\end{corollary}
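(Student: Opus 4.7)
The plan is to take the solution $F_t$ produced by Theorem~\ref{theo1}(d) and compare the two exponential weights $e^{c\langle v\rangle^s}$ and $e^{\alpha(t)\langle v\rangle^{\gamma}}$ by cutting velocity space at the threshold where they coincide. Since $s<\gm$, the second weight eventually dominates, so the estimate \eqref{(1.13)} controls the tail, while on the complementary bounded region the first weight is itself bounded and mass conservation handles the contribution.

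More precisely, I would first solve the equation $\alpha(t)\langle v\rangle^{\gm}=c\langle v\rangle^{s}$ in $\langle v\rangle$, obtaining the threshold
\[
R(t) := \left(\fr{c}{\alpha(t)}\right)^{1/(\gm-s)},
\]
and observe that $cR(t)^{s}=c\bigl(c/\alpha(t)\bigr)^{s/(\gm-s)}=\alpha_s(t)$ and equivalently $\alpha(t)R(t)^{\gm}=\alpha_s(t)$. Because $\gm>s>0$, for $\langle v\rangle\ge R(t)$ we have $c\langle v\rangle^{s}\le \alpha(t)\langle v\rangle^{\gm}$, and for $\langle v\rangle<R(t)$ we have $c\langle v\rangle^{s}<\alpha_s(t)$.

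I would then split the integral as
\[
\int_{\mathbb{R}^N} e^{c\langle v\rangle^{s}}{\rm d}F_t(v)=\int_{\langle v\rangle<R(t)}+\int_{\langle v\rangle\ge R(t)}.
\]
On the inner region, pointwise $e^{c\langle v\rangle^{s}}\le e^{\alpha_s(t)}$, and since $F_t$ is conservative we get $\int_{\langle v\rangle<R(t)}e^{c\langle v\rangle^{s}}{\rm d}F_t\le e^{\alpha_s(t)}\|F_0\|_0$. On the outer region, the pointwise inequality $e^{c\langle v\rangle^{s}}\le e^{\alpha(t)\langle v\rangle^{\gm}}$ combined with \eqref{(1.13)} yields $\int_{\langle v\rangle\ge R(t)}e^{c\langle v\rangle^{s}}{\rm d}F_t\le 2\|F_0\|_0$. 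Summing the two contributions gives the claimed bound $(e^{\alpha_s(t)}+2)\|F_0\|_0$.

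There is essentially no obstacle here: the only thing to check is the algebraic identity $cR(t)^{s}=\alpha(t)R(t)^{\gm}=\alpha_s(t)$ and the monotonicity argument ensuring that the crossover of the two exponents is unique, both of which are immediate from $s<\gm$. One must invoke Theorem~\ref{theo1}(d) to guarantee the existence of a solution for which \eqref{(1.13)} holds (and hence the existence statement of the corollary) and use mass conservation $\|F_t\|_0=\|F_0\|_0$ built into the notion of conservative solution.
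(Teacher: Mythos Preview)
Your proposal is correct and essentially identical to the paper's own proof: both split the integral at the threshold where $c\langle v\rangle^s=\alpha(t)\langle v\rangle^{\gm}$, bound the inner part by $e^{\alpha_s(t)}\|F_0\|_0$ via mass conservation, and bound the outer part by $2\|F_0\|_0$ via \eqref{(1.13)}. The only cosmetic difference is that the paper writes the splitting condition as $c\langle v\rangle^s\lessgtr\alpha_s(t)$ rather than $\langle v\rangle\lessgtr R(t)$, which is the same set.
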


\begin{proof}[Proof of Corollary~\ref{corollary 1.4}]  The proof of this Corollary is quite short and we can present
it here. As a consequence of Theorem~\ref{theo1} there exists a conservative
measure weak solution $F_t$ of Eq.~\eqref{(B)} such that $F_t$ satisfies
\eqref{(1.13)}. For any $t>0$, by definition of $\alpha_s(t)$ and $0<s<\gm$ we
have
$$c\langle v\rangle^{s}>\alpha_s(t)\,\,\,\Longrightarrow\,\,\,
c\langle v\rangle^{s}=\left(\fr{\alpha_s(t)}{c\langle
v\rangle^s}\right)^{\fr{\gm-s}{s}}\alpha(t)\langle
v\rangle^{\gm}<\alpha(t)\langle v\rangle^{\gm}\,.$$ Thus
\begin{eqnarray*}
  && \int_{\mathbb{R}^N}e^{c\langle v\rangle^{s}} {\rm
    d}F_t(v) =\int_{\{c\langle v\rangle^s\le \alpha_s(t)\}}
    e^{c\langle v\rangle^{s}} {\rm d}F_t(v)+
  \int_{\{c\langle v\rangle^s> \alpha_s(t)\}}e^{c\langle v\rangle^{s}}
  {\rm d}F_t(v)
  \\
  &&\le e^{\alpha_s(t)}\|F_0\|_0+ \int_{\{c\langle v\rangle^s>
    \alpha_s(t)\}}e^{\alpha(t)\langle v\rangle^{\gm}} {\rm d}F_t(v)\le
    e^{\alpha_s(t)}\|F_0\|_0
  +2\|F_0\|_0\,.
\end{eqnarray*}
\end{proof}

Our second main result of this paper is

\begin{theorem}[Uniqueness and stability estimates for locally
  integrable $b(\cdot)$]\label{theo2}
  Let $B(z,\sg)=|z|^{\gm}b(\cos\theta)$ satisfy {\bf (H4)}. Given any
  initial datum $F_0\in {\mathcal B}^{+}_2(\mathbb{R}^N)$ with $\|F_0\|_0\neq 0$, we have

\begin{itemize}

\item[(a)] Every conservative measure weak solution of Eq.~\eqref{(B)}
  is a strong solution, while every measure strong solution of
  Eq.~\eqref{(B)} is a measure weak solution.

\item[(b)] Let $F_t$ be a measure strong solution of Eq.~\eqref{(B)}
  with the initial datum $F_0$ satisfying $\|F_t\|_2\le \|F_0\|_2$ for all
  $t\ge 0.$ Then $F_t$ in fact conserves the mass, momentum and energy.

\item[(c)]  There exists a unique
conservative measure strong solution $F_t$ of Eq.~\eqref{(B)} such that
$F_t|_{t=0}=F_0$. Therefore $F_t$ satisfies the
moment production estimates in Theorem \ref{theo1} .

\item[(d)] Let $F_t$ be the unique conservative measure strong solutions of Eq.~\eqref{(B)}
 with the initial datum $F_0$ and
 let $G_t$ be a conservative measure strong solutions of
Eq.~\eqref{(B)} on the time interval $[\tau, \infty)$ with an initial datum
$G_t|_{t=\tau}=G_\tau\in{\mathcal B}^{+}_2(\mathbb{R}^N)$ for some $\tau\ge
0$. Then:
\begin{itemize}
\item If $\tau=0$, then
\begin{equation}\label{(1.21)}
\|F_t-G_t\|_{2}\le  \Psi_{F_0}(\|F_0-G_0\|_2)e ^{C(1+t)},\quad t\ge 0
\end{equation}
where $\Psi_{F_0}$ is given by \eqref{(1.20)}, $C={\mathcal R}(\gm, A_0,
A_{2}\,\|F_0\|_0, \|F_0\|_2)$ is an explicit positive continuous function on
$(\mathbb{R}_{>0})^5$.

\item If $\tau>0$, then
\begin{equation}\label{(1.21*)}
\|F_t-G_t\|_{2}\le\|F_\tau-G_\tau\|_2 e^{c_{\tau}(t-\tau)},\,\quad t\in [\tau,
\infty)
\end{equation}
where $c_{\tau}= 4A_0({\mathcal K}_{2+\gm}(F_0)+\|F_0\|_2)(1+\fr{1}{\tau})$,
${\mathcal K}_{2+\gm}(F_0)$ is given in (\ref{(1.12*)}) with $s=2+\gm$.
\end{itemize}

\item[(e)] If $F_0$ is absolutely continuous with respect to the
  Lebesgue measure, i.e. ${\rm d}F_0(v)=f_0(v){\rm d}v$ with $0\le
  f_0\in L^1_2(\mathbb{R}^N)$, then the unique conservative measure
  strong solution $F_t$ with the initial datum $F_0$ is also
  absolutely continuous with respect to the Lebesgue measure: ${\rm
    d}F_t(v)=f_t(v){\rm d}v$ for all $t\ge 0$, and $f_t$ is the unique
  conservative mild solution of Eq.~\eqref{(B)} with the initial datum
  $f_0$.

\item[(f)] If $F_0$ is not a Dirac mass and let $F_t$ be the unique
  measure strong solution of Eq.~\eqref{(B)} with the initial datum
  $F_0$, then there is a sequence $\{f^n_t\}$ of conservative
  $L^1$-solutions of Eq.~\eqref{(B)} with initial data $0\le f^n_0\in
  L^1_2(\mathbb{R}^N)$ satisfying
\begin{equation}\label{(1.22)}
  \int_{\mathbb{R}^N} \left( \begin{array}{c} 1 \\ v \\
      |v|^2 \end{array} \right) f^n_0(v) {\rm d}v=\int_{\mathbb{R}^N}
  \left( \begin{array}{c} 1 \\ v \\
      |v|^2 \end{array} \right)  {\rm
    d}F_0(v)\,,\quad  n=1,2,\dots
\end{equation}
such that
\begin{equation}\label{(1.23)}
\lim_{n\to\infty}\int_{\mathbb{R}^N}\vp(v) f^n_t(v){\rm d}v
=\int_{\mathbb{R}^N}\vp(v){\rm d}F_t(v)\qquad \forall\,\vp\in C_b(\mathbb{R}^N),\quad
\forall\,t\ge 0\,.
\end{equation}
\end{itemize}
\end{theorem}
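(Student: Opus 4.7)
The plan is to exploit the cutoff assumption (H4) to reduce parts (a)--(c) to Theorem 1 and the classical $L^1$ theory, and then concentrate on the quantitative stability estimate (d), from which uniqueness in (c) will follow at once.

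For part (a), given a conservative measure weak solution $F_t$, I would first establish polynomial moment production ($\|F_t\|_s < \infty$ for all $t>0$ and $s>2$) by a Povzner-type argument under cutoff, paralleling the estimates in Theorem 1(c); this bridges the gap between being merely conservative and satisfying the production hypothesis (1.E). Proposition 1.4 then gives continuity of $\tau \mapsto Q(F_\tau, F_\tau)$ into $\mathcal{B}_0(\mathbb{R}^N)$. Extending the weak formulation (1.9) from $\varphi \in C^2_b$ to $\varphi \in L^\infty_{-2}\cap C$ by density, together with the dual characterization (5.1) of $\|\cdot\|_2$, yields the Bochner identity $F_t - F_0 = \int_0^t Q(F_\tau,F_\tau)\,d\tau$ in $\mathcal{B}_0$, hence the strong form. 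The converse direction is immediate by testing the strong equation against $\varphi \in C^2_b$. Part (b) follows by testing against the collision invariants $v_j, |v|^2 \in L^\infty_{-2}\cap C^2$ and using $\Delta\varphi = 0$, the integrability being granted by moment production. Part (c) is then immediate: existence by Theorem 1(a) combined with part (a), and uniqueness by applying part (d) at $\tau=0$ to two solutions sharing $F_0$, since $\Psi_{F_0}(0)=0$.

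The heart of the proof is part (d). For $\tau>0$, set $H_t := F_t - G_t$ and use bilinearity to write
$$Q(F_t,F_t) - Q(G_t,G_t) = Q(H_t, F_t) + Q(G_t, H_t).$$
Testing $\int \varphi\,dH_t$ against $\varphi\in L^\infty_{-2}\cap C$ with $\|\varphi\|_{L^\infty_{-2}}\le 1$, the dual formulas (1.15)--(1.17) together with the bound $|L_B[\psi]|+A \le C_\varphi\langle v-v_*\rangle^\gamma \le C'\langle v\rangle^\gamma\langle v_*\rangle^\gamma$ yield the differential inequality $\frac{d}{dt}\|H_t\|_2 \le c_\tau\|H_t\|_2$, where $c_\tau$ is controlled by $\|F_t\|_{2+\gamma}$ (uniformly bounded on $[\tau,\infty)$ by (1.12) with $s=2+\gamma$) and the conserved $\|G_t\|_2$; Gronwall gives (1.21*). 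For $\tau=0$ this argument fails because $\|F_t\|_{2+\gamma}\to\infty$ as $t\to 0^+$. The idea is to truncate at the scale $R = r^{-1/3}$ with $r := \|F_0-G_0\|_2$, splitting each measure into a bounded part on $\{|v|\le R\}$ and a tail. The bounded part closes by a Gronwall estimate with constants depending only on $R$ and the initial mass/energy, contributing a term of order $r$ times an exponential factor in $R^\gamma t$. The tail is controlled by $\int_{|v|>R}|v|^2\,d(F_t+G_t)$; testing (1.9) against a smooth truncation of $|v|^2$ and using the cutoff shows that this tail at time $t$ is dominated by the initial tail $\int_{|v|>R}|v|^2\,dF_0 + \int_{|v|>R}|v|^2\,dG_0$ up to an additive constant of order $R^\gamma t$. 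Balancing the cutoff radius through $R = r^{-1/3}$ produces the three contributions $r$, $r^{1/3}$ (a residual cross-term) and the initial energy tail, which together constitute $\Psi_{F_0}(r)$.

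Part (e) follows by combining the classical Arkeryd--Mischler--Wennberg theory, which supplies a unique conservative mild $L^1$-solution $f_t$ with datum $f_0$, with the observation that $\mu_t(dv) := f_t(v)\,dv$ is a measure weak solution and hence by part (a) a measure strong solution; uniqueness in (c) forces $F_t = f_t\,dv$. For part (f), since $F_0$ is not a Dirac mass one has the strict inequality $\int|v|^2\,dF_0 \cdot \|F_0\|_0 > |\int v\,dF_0|^2$, which permits the construction of $f^n_0\in L^1_2$ with exact moment matching (1.22) by standard mollification plus a small mean/variance correction, and with $f^n_0\,dv\rightharpoonup F_0$ weakly in $C_b$. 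By the moment production of Theorem 1 applied uniformly in $n$, the family $\{f^n_t\,dv\}$ has uniform $s$-moment bounds for every $s>2$ and $t\ge t_0>0$, hence is weakly precompact on compact subsets of $t>0$; passing to the limit in the weak form (1.9) shows any subsequential limit is a measure weak solution with datum $F_0$, which by uniqueness must equal $F_t$, giving (1.23). The principal technical obstacle throughout is the $\tau=0$ stability in (d): the quantity $\Psi_{F_0}(r)$ has to be extracted by the delicate truncation analysis sketched above, and this is what drives the entire uniqueness and continuous-dependence theory.
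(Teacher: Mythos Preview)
Your approach to part (d) has a genuine gap that propagates to (b) and (c). The bilinear splitting $Q(F,F)-Q(G,G)=Q(H,F)+Q(G,H)$ followed by testing against $\varphi$ with $\|\varphi\|_{L^\infty_{-2}}\le 1$ does \emph{not} close: bounding $|L_B[\varphi]|$ via $\langle v'\rangle^2\le\langle v\rangle^2+\langle v_*\rangle^2$ and $|v-v_*|^\gamma\lesssim\langle v\rangle^\gamma+\langle v_*\rangle^\gamma$ produces terms of the form $\|H_t\|_{2+\gamma}\|F_t\|_0$ and $\|G_t\|_{2+\gamma}\|H_t\|_0$, neither of which is dominated by $c_\tau\|H_t\|_2$. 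Only $F_t$ carries the moment production estimate; $G_t$ is merely conservative. The paper circumvents this with a sign--decomposition argument (Lemmas~5.1 and~5.4): one differentiates $\int\langle v\rangle_n^2\,d(H_t)^+$ via the sign function $\kappa_t$, and inside $Q^+$ uses the collision invariant identity $\langle v'\rangle^2+\langle v_*'\rangle^2=\langle v\rangle^2+\langle v_*\rangle^2$ to dominate $L_B[\varphi\kappa](v,v_*)+L_B[\varphi\kappa](v_*,v)$ by $A(v-v_*)(\langle v\rangle^2+\langle v_*\rangle^2)$, which then \emph{cancels} against the corresponding piece of $Q^-$. What survives involves only $\|F_t\|_{2+\gamma}$, $\|F_t\|_2$, $\|H_t\|_0$ and $\|H_t\|_\gamma$ --- no moment of $G_t$ beyond order $2$ and no $\|H_t\|_{2+\gamma}$. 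This asymmetric estimate is the missing idea. For $\tau=0$ the paper additionally runs a separate Gronwall on $\|H_t\|_0$ (which needs only $\|F_t+G_t\|_\gamma\le C_0$), then feeds it into the $\|H_t\|_2$ inequality through $\int_r^t s^{-1}\|H_s\|_0\,ds\lesssim r|\log r|+\int_0^t\|H_s\|_2(1+|\log s|)\,ds$, giving an integrable weight; your proposed exponential in $R^\gamma t$ with $R=r^{-1/3}$ would blow up as $r\to 0$.

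There is also a circularity in your plan for part (a): running Povzner to get moment production requires differentiating $\|F_t\|_{2p}$, i.e.\ testing against $\langle v\rangle^{2p}\notin C^2_b$, which already presupposes $F_t\in\mathcal{B}_{2p+\gamma}$. The paper's route is direct and needs no higher moments: under (H4), Proposition~1.4 with $s=0$ gives $\|Q^\pm(F_t,F_t)\|_0\le 4A_0\|F_0\|_2^2$ from conservation alone, whence $t\mapsto F_t$ is Lipschitz in $\|\cdot\|_0$; continuity in $\|\cdot\|_2$ then follows from the identity $\|F_t-F_{t_0}\|_2=2\int\langle v\rangle^2\,d(F_{t_0}-F_t)^+$ together with $(F_{t_0}-F_t)^+\le F_{t_0}$, and the strong equation is recovered via the duality~(5.1).
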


\begin{remark}The trivial case $\|F_0\|_0=0$, i.e. $F_0=0$, is
  excluded from the above theorems since $F_0=0$ implies that
  $F_t\equiv 0$ is the unique conservative measure solution of
  Eq.~\eqref{(B)}.
\end{remark}

\begin{remark} An application of the estimate \eqref{(1.21*)} for solutions
with different initial times  will be seen in our next paper concerning the
rate of convergence to equilibrium.
\end{remark}

\begin{remark} In the second part of this work we shall prove the
  exponential convergence to equilibrium (for bounded angular function
  $b(\cdot)$): $\|F_t-M\|_0\le Ce^{-ct}$ where $M$ is the
  Maxwellian (Gaussian) with the same mass, momentum and energy as
  $F_0$ (assuming that $F_0$ is not a single Dirac mass and
  $\|F_0\|_0\neq 0$), $C, c>0$ are constants depending only on
  $N$, $b(\cdot)$, $\gm$ and the mass, momentum and energy of
  $F_0$. This result will allow us to improve the stability estimate
  \eqref{(1.21)} to be uniform in time:
  \[\sup_{t\ge
    0}\|F_t-G_t\|_{2}\le {\wt \Psi}_{F_0}(\|F_0-G_0\|_2)
  \]
  for some explicit continuous function ${\wt \Psi}_{F_0}(r)$ on
  $[0,\infty)$ satisfying ${\wt \Psi}_{F_0}(0)=0$.
\end{remark}

\subsection{Strategy and plan of the paper}
\label{sec:strategy-plan-paper}

We shall first in Section~\ref{sec2} prove some continuity and Lipschitz
estimates on the collision operator $Q$ in (weighted) total variation
topology. In Section~\ref{sec3} we shall prove moment estimates, first on the
kernel $L_B$ and then on the collision operator $Q$, plus several technical
lemmas on fractional binomial expansions, on the beta function and on some ODE
estimates. After these two sections which remain purely at the level of
functional inequalities, we shall start considering the time evolution problem
and tackle the proof of the first main Theorem~\ref{theo1} in
Section~\ref{sec4}: the main step in the construction of weak measure
solutions is based on an approximation argument with the help of the Mehler
transform, and the moment estimates on the solutions will be proved with the
help of the functional results in the previous section. Finally in
Section~\ref{sec5} we shall prove the second main Theorem~\ref{theo2} by
carefully revisiting the uniqueness estimates known for functions in the case
of measures.

\section{Regularity estimates on the collision operator}
\label{sec2}

We shall prove in this section some continuity and Lipschitz estimates on the
collision operator in the (weighted) total variation topology. It will be
useful for defining measure weak solutions of Eq.~\eqref{(B)} as we mentioned
in Section~\ref{sec1}, but also for proving weak convergence of approximate
solutions, which leads to the existence of measure weak solutions. We start
with a preliminary useful representation of the collision velocities.

\subsection{Representations of $\langle v'\rangle^2, \langle v_*'\rangle^2$}

We first begin this section with a preliminary technical computation.

For any $v,v_*\in\mathbb{R}^N$, let us define
$${\bf h} = \fr{v+v_*}{|v+v_*|}\quad {\rm for}\quad v+v_*\neq
0\,;\quad {\bf h}={\bf e}_1=(1,0,...,0)\quad {\rm for}\quad v+v_*=0$$ and
recall that ${\bf n}= (v-v_*)/|v-v_*|$ when $v \not= v_*$ and ${\bf n}={\bf
e}_1$ else. By  \eqref{(1.v)} we have
\begin{equation}\label{(1.24)}
\left\{
\begin{array}{l} \displaystyle
\langle v'\rangle^2 := 1+|v'|^2 = \fr{\langle v\rangle ^2+\langle
v_*\rangle^2}{2} +
\fr{|v+v_*||v-v_*|}{2}({\bf h}\cdot \sg) \vspace{0.3cm} \\
\displaystyle \langle v_*'\rangle^2 :=  1+ |v'_*|^2 = \fr{\langle v\rangle
^2+\langle v_*\rangle^2}{2} - \fr{|v+v_*||v-v_*|}{2}({\bf h}\cdot \sg) \,.
\end{array}
\right.
\end{equation}
Let us also define the unit vector
\[
{\bf j}=\fr{{\bf
    h}-({\bf h}\cdot{\bf n}){\bf n}} {\sqrt{1-({\bf h}\cdot{\bf
      n})^2}} \mbox{ for } |{\bf h}\cdot{\bf n}|<1 \mbox{ and } {\bf
  j}= {\bf e}_1 \mbox{ for } |{\bf h}\cdot{\bf n}|=1 \,.
\]
Then with the change of variables
$\sg=\cos\theta {\bf n}+\sin\theta
\,\og\,,\,\og\in\mathbb{S}^{N-2}({\bf n})$, we have $${\bf h}\cdot \sg
= ({\bf h}\cdot {\bf n})\cos\theta + \sqrt{1-({\bf h}\cdot{\bf
    n})^2}\,\sin\theta\,({\bf j}\cdot\og)\,, \quad
\og\in\mathbb{S}^{N-2}({\bf n})$$
so that we get another representation:
\begin{equation}\label{(1.25)}
\left\{
\begin{array}{l} \displaystyle
\langle v'\rangle^2=\langle v\rangle ^2\cos^2\theta/2+\langle
v_*\rangle^2\sin^2\theta/2 + \sqrt{|v|^2|v_*|^2-(v\cdot v_*)^2}\,
\sin\theta\,({\bf j}\cdot\og) \vspace{0.3cm}\\ \displaystyle
\langle v_*'\rangle^2=\langle v\rangle ^2\sin^2\theta/2+\langle
v_*\rangle^2\cos^2\theta/2 - \sqrt{|v|^2|v_*|^2-(v\cdot
v_*)^2}\, \sin\theta\,({\bf j}\cdot\og)\,.
\end{array}
\right.
\end{equation}

\subsection{Continuity estimate on the collision operator}

\begin{proposition}[Continuity of the collision operator]\label{prop2.1}
  Let $B(z,\sg)$ be given by \eqref{(1.B)}-\eqref{(1.2)}-\eqref{(1.3)}
  with $b(\cdot)$ satisfying {\bf (H0)}.  Then
\begin{itemize}
\item[(I)] The function $(v,v_*)\mapsto L_B[\Dt\vp](v,v_*)$ is continuous on
  $\mathbb{R}^N \times \mathbb{R}^N$ for all $\vp \in
  C^2(\mathbb{R}^N)$.

\item[(II)] Let $B_n(z,\sg)=\bar{B}_n(|z|,\cos\theta)$
satisfy \eqref{(1.2)} and
\begin{equation}\label{(2.B1)}
\bar{B}_n(r,t)\nearrow \bar{B}(r,t)\quad (n\to\infty)\quad
\forall\,(r,t)\in[0,\infty)\times(-1,1).
\end{equation}
  Then for any
$\vp\in C^2(\mathbb{R}^N)$ and any $0<R<\infty$
\begin{equation}\label{(2.B2)}
\sup_{|v|+|v_*|\le R}|L_{B_n}[\Dt\vp](v,v_*)-L_{B}[\Dt\vp](v,v_*) |\to 0\quad
(n\to\infty).
\end{equation}
Moreover let $\vp_n\in C^2(\mathbb{R}^N)$ satisfy
\begin{equation}\label{(2.B3)}
\lim_{n\to\infty}\vp_n(v)= \vp(v)\,\,\,\forall\, v \in\mathbb{R}^N\,;\quad
\sup_{n\ge 1}\sup_{|v|\le R}\sum_{|\alpha|\le 2}|{\p
^{\alpha}}\vp_n(v)|<\infty\quad \forall\,R<\infty \,.
\end{equation}
Then
\begin{equation}\label{(2.B4)}
L_{B_n}[\Dt\vp_n](v,v_*)\to L_{B}[\Dt\vp](v,v_*)\quad (n\to\infty)\quad
\forall\, (v,v_*)\in\mathbb{R}^N\times\mathbb{R}^N\,.
\end{equation}
\end{itemize}
\end{proposition}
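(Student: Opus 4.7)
The whole proof rests on two ingredients: the integrability $b(\cos\theta)\sin^N\theta\in L^1((0,\pi))$ given by {\bf (H0)}, and the cancellation estimate \eqref{(1.8)} which bounds the inner $\og$-integral by $C\sin^2\theta$ on compact sets in $(v,v_*)$. Together they supply the dominants needed for all the convergence arguments below, and the four statements reduce to dominated convergence, monotone convergence, and Dini's theorem.

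\textbf{Proof of (I).} I would split according to whether we approach a diagonal point. If $v_0=v_{*,0}$, then \eqref{(1.v)} gives $v=v_*\Rightarrow v'=v_*'=v$, so $\Dt\vp\equiv 0$ and $L_B[\Dt\vp](v_0,v_{*,0})=0$; continuity at the diagonal then follows directly from
\[
|L_B[\Dt\vp](v,v_*)|\le C_{\vp}|v-v_*|^2 (1+|v-v_*|^2)^{\gm/2} A_2 \longrightarrow 0 \quad (|v-v_*|\to 0),
\]
using \eqref{(1.3)}, \eqref{(1.8)} and {\bf (H0)}. At an off-diagonal point $v_0\neq v_{*,0}$, the unit vector ${\bf n}(v,v_*)=(v-v_*)/|v-v_*|$ varies continuously in a neighbourhood, but the surface $\mathbb{S}^{N-2}({\bf n})$ moves, and the main technical step is to pull $\og\in\mathbb{S}^{N-2}({\bf n})$ back to a fixed $\og_0\in\mathbb{S}^{N-2}({\bf n}_0)$ through a local continuous family of rotations $R({\bf n},{\bf n}_0)$ (for instance the planar rotation in $\mathrm{span}({\bf n}_0,{\bf n})$ sending ${\bf n}_0$ to ${\bf n}$, with $R({\bf n}_0,{\bf n}_0)=I$). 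After this pullback, the integrand $\Dt\vp(v,v_*,v',v_*')$ becomes jointly continuous in $(v,v_*,\theta,\og_0)$ over a fixed compact surface, so dominated convergence over $\og_0$ (with the pointwise dominant from \eqref{(1.7)}) shows that $(v,v_*)\mapsto I(v,v_*,\theta):=\int_{\mathbb{S}^{N-2}({\bf n})}\Dt\vp\,{\rm d}\og$ is continuous for each fixed $\theta\in(0,\pi)$. A final dominated convergence in $\theta$, with the uniform majorant $\bar B(|v-v_*|,\cos\theta)\sin^{N-2}\theta\,|I|\le C\,b(\cos\theta)\sin^N\theta$ drawn from \eqref{(1.3)} and \eqref{(1.8)}, delivers continuity of $L_B[\Dt\vp]$.

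\textbf{Proof of \eqref{(2.B2)}.} On $\{|v|+|v_*|\le R\}$ the same bound on $|I|$ yields
\[
|L_{B_n}[\Dt\vp]-L_B[\Dt\vp]|(v,v_*) \le C_R \int_0^\pi [\bar B-\bar B_n](|v-v_*|,\cos\theta)\sin^N\theta\,{\rm d}\theta =: \Phi_n(|v-v_*|),
\]
and the map $\Phi_n:[0,2R]\to[0,\infty)$ is continuous in $r$ (by \eqref{(1.2)} and dominated convergence), decreasing in $n$ (since $\bar B_n\nearrow\bar B$), and pointwise convergent to $0$ by monotone convergence. I would then invoke Dini's theorem on the compact interval $[0,2R]$ to upgrade this to uniform convergence on the target set.

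\textbf{Proof of \eqref{(2.B4)}.} For a fixed point $(v,v_*)$ the surface $\mathbb{S}^{N-2}({\bf n})$ is fixed, and the statement reduces to dominated convergence in $\theta$. The inner integral $I_n(\theta):=\int_{\mathbb{S}^{N-2}({\bf n})}\Dt\vp_n\,{\rm d}\og$ converges pointwise in $\theta$ to $I(\theta)$ by dominated convergence on that fixed surface, using the dominant $C|v-v_*|\sin\theta$ from \eqref{(1.7)} and the locally uniform $C^1$ control in \eqref{(2.B3)}. Combined with $\bar B_n\nearrow\bar B$, the whole integrand $\bar B_n\sin^{N-2}\theta\,I_n(\theta)$ converges pointwise and is majorised by $C\,b(\cos\theta)\sin^N\theta\in L^1((0,\pi))$ (via \eqref{(1.8)} applied to the sequence $\{\vp_n\}$ and \eqref{(1.3)}), closing the argument. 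The one genuinely delicate point I anticipate is the continuous local trivialisation of the moving surfaces $\mathbb{S}^{N-2}({\bf n})$ in Part (I); once that is in place, every other step is a short application of Dini, monotone, or dominated convergence driven by \eqref{(1.8)} and {\bf (H0)}.
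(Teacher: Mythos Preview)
Your proof is correct; the arguments for \eqref{(2.B2)} and \eqref{(2.B4)} are essentially the same as the paper's (Dini on $r\mapsto\int_0^\pi(\bar B-\bar B_n)\sin^N\theta\,{\rm d}\theta$, and dominated convergence driven by \eqref{(1.8)} and {\bf (H0)}).

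For Part~(I), however, the paper takes a different and somewhat slicker route that sidesteps the ``delicate point'' you anticipate. Instead of trivialising the moving surfaces $\mathbb{S}^{N-2}({\bf n})$ via rotations, the paper truncates the kernel: write $B=B\wedge R+(B-R)^+$. The bounded piece $L_{B\wedge R}[\Dt\vp]$ can be written as a single integral over the \emph{fixed} sphere $\mathbb{S}^{N-1}$ in the variable $\sg$ (no cancellation is needed when the kernel is bounded), and since $v',v_*'$ depend continuously on $(v,v_*,\sg)$ the continuity of this term follows by elementary dominated convergence, with no case split at the diagonal and no rotation machinery. The tail $L_{(B-R)^+}[\Dt\vp]$ is controlled uniformly near $(v_0,v_{*,0})$ by \eqref{(1.8)} and $\int_0^\pi(C_\gm b(\cos\theta)-R)^+\sin^N\theta\,{\rm d}\theta\to 0$ as $R\to\infty$. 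Your rotation argument is perfectly valid and arguably more self-contained, but the truncation trick buys a shorter proof that uniformly handles diagonal and off-diagonal points.
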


\begin{proof}[Proof of Proposition~\ref{prop2.1}] Let us write
\begin{equation}\label{(2.B5)}
L_{B}[\Dt\vp](v,v_*)=\int_{0}^{\pi}\bar{B}(|v-v_*|,\cos\theta)
\sin^{N}\theta\, L[\Dt\vp](v,v_*,\theta){\rm d}\theta
\end{equation}
where \begin{eqnarray*}&&
L[\Dt\vp](v,v_*,\theta)=\fr{1}{\sin^2\theta}\int_{\mathbb{S}^{N-2}({\bf
n})}\Dt\vp \,{\rm d}\og,\quad 0<\theta<\pi.\end{eqnarray*} Recalling
\eqref{(1.8)} we have
\begin{equation}\label{(2.B6)}\sup_{0<\theta<\pi}|L[\Dt\vp](v,v_*,\theta)|\le
|\mathbb{S}^{N-2}|\left(\max_{|\xi|\le \sqrt{|v|^2+|v_*|^2}}|H_{\vp}(\xi)|\right)|v-v_*|^{2}.
\end{equation}
\medskip

\noindent
{\bf Part~(I).}   For any $0<R<\infty$, consider decomposition
$$B(z,\sg)=B (z,\sg)\wedge R+(B(z,\sg)-R )^{+}$$
where $x\wedge y=\min\{x,y\}, (x-y)^+=\max\{x-y,0\}$. We have
\begin{eqnarray*}&& L_{B}[\Dt\vp](v,v_*)=L_{B\wedge
R}[\Dt\vp](v,v_*)+L_{(B-R)^{+}}[\Dt\vp](v,v_*),\\
&& L_{B\wedge R}[\Dt\vp](v,v_*) =\int_{\mathbb{S}^{N-1}}[B(v-v_*,\sg)\wedge
R]\Dt\vp\,{\rm d}\sg.\end{eqnarray*} Fix any
$(v_0,{v_*}_0)\in\mathbb{R}^N\times\mathbb{R}^N$. Applying
\eqref{(2.B5)}-\eqref{(2.B6)} to $L_{(B-R)^{+}}[\Dt\vp]$ and recalling the
assumption \eqref{(1.3)} we have
$$
\sup_{|v-v_0|^2+|v_*-{v_*}_0|^2\le 1}|L_{(B-R)^{+}}[\Dt\vp](v,v_*)|\le
C_{\vp}\int_{0}^{\pi}\Big(C_{\gm}b(\cos\theta) - R\Big)^{+}\sin^{N}\theta\,{\rm
d}\theta=: I_{\vp,\gm}(R)$$ where  $C_{\vp}, C_{\gm}$ are finite constants
depending only on $\vp,\gm, {v_0},{v_*}_0$. Therefore
\begin{multline}\label{(2.B7)}
 |L_B[\Dt\vp](v,v_*)- L_B[\Dt\vp](v_0,{v_*}_0)|
\\
\le |L_{B\wedge R}[\Dt\vp](v,v_*)- L_{B\wedge
R}[\Dt\vp](v_0,{v_*}_0)|+I_{\vp,\gm}(R)\quad
\forall\,|v-v_0|^2+|v_*-{v_*}_0|^2\le 1. \end{multline} Let
$(\Dt\vp)_0=\vp({v_0}')+\vp({v_*}_0')-\vp(v_0)-\vp({v_*}_0).$ Applying
\eqref{(2.B5)} to $L_{B\wedge R}[\Dt\vp]$ and using the assumption
\eqref{(1.2)} we have
 \begin{eqnarray*}&& |L_{B\wedge
R}[\Dt\vp](v,v_*)- L_{B\wedge
R}[\Dt\vp](v_0,{v_*}_0)|\\
&&\le C_{\vp}|\mathbb{
S}^{N-2}|\int_{0}^{\pi}\Big|\bar{B}(|v-v_*|,\cos\theta)\wedge
R-\bar{B}(|v_0-{v_*}_0|,\cos\theta)\wedge R\Big|\sin^{N-2}\theta\,{\rm d}\theta\\
&&+R\int_{\mathbb{S}^{N-1}}\Big|\Dt\vp- (\Dt\vp)_0\Big|\,{\rm d}\sg \to 0\quad
{\rm as}\quad (v,v_*)\to (v_0,{v_*}_0).\end{eqnarray*}  Also by assumption
$\int_{0}^{\pi}b(\cos\theta)\sin^{N}\theta\,{\rm d}\theta <\infty$ we have
$I_{\vp,\gm}(R)\to 0$ as $R\to+\infty$. Thus from \eqref{(2.B7)}, by first
letting $(v,v_*)\to (v_0,{v_*}_0)$ and then letting $R\to+\infty$, we obtain
$$\limsup_{(v,v_*)\to
(v_0,{v_*}_0)}|L_B[\Dt\vp](v,v_*)- L_B[\Dt\vp](v_0,{v_*}_0)|=0\,.$$
\medskip

\noindent
{\bf Part~(II).} By  assumption \eqref{(2.B1)} and \eqref{(1.3)} we have
\[
\bar{B}_n(r,\cos\theta)\le
\bar{B}_{n+1}(r,\cos\theta)\le\bar{B}(r,\cos\theta)\le
(1+r^2)^{\gm/2}b(\cos\theta)
\]
which together with \eqref{(1.2)} implies that the functions
$$r\mapsto \int_{0}^{\pi}\bar{B}_n(r,\cos\theta)\sin^N\theta\,{\rm
d}\theta,\quad r\mapsto \int_{0}^{\pi}\bar{B}(r,\cos\theta)\sin^N\theta\,{\rm
d}\theta$$ are all continuous on $[0,\infty)$. Thus by first using
\eqref{(2.B1)} and dominated convergence and then using Dini's theorem  we
conclude that  for any $0<R<\infty$
$$\int_{0}^{\pi}\Big(\bar{B}(r,\cos\theta)-\bar{B}_n(r,\cos\theta)\Big)\sin^N\theta\,{\rm
d}\theta\to 0\,\,\, (n\to 0) \,\,\,{\rm uniformly \,\,\,in}\,\,\, r\in[0,R].$$
Therefore applying \eqref{(2.B5)}-\eqref{(2.B6)} to $L_{B-B_n}[\Dt\vp]$ we
have, for any $0<R<\infty$,
 \begin{eqnarray*}&& \sup_{|v|+|v_*|\le R}|L_{B}[\Dt\vp](v,v_*)-
L_{B_n}[\Dt\vp](v,v_*)| =\sup_{|v|+|v_*|\le R}|L_{B-B_n}[\Dt\vp](v,v_*)|\\
&&\le C_{\vp,R}\sup_{r\in[0, R]}\int_{0}^{\pi}\Big(\bar{B}(r,\cos\theta)-
\bar{B_n}(r,\cos\theta)\Big)\sin^{N}\theta\, {\rm d}\theta\to 0\quad
(n\to\infty)\end{eqnarray*} where $C_{\vp,R}=\sup\limits_{|\xi|\le R
}|H_{\vp}(\xi)| R^2$.

Finally for any $(v,v_*) \in{\mathbb{R}^N\times\mathbb{R}^N}$, using
\eqref{(2.B3)} and denoting $r=|v-v_*|$ we have by dominated convergence that
\begin{eqnarray*}&& \left|L_{B}[\Dt\vp](v,v_*)-
L_{B_n}[\Dt\vp_n](v,v_*)\right| \le \left|L_{B}[\Dt(\vp-\vp_n)](v,v_*)\right|
+ \left|L_{B-B_n}[\Dt\vp_n](v,v_*)\right|
\\
&&\le \int_{0}^{\pi}\bar{B}(r,\cos\theta) \sin^{N}\theta\,
\left|L[\Dt(\vp-\vp_n)](v,v_*,\theta)\right|{\rm d}\theta\\
&& + \,C \int_{0}^{\pi}\left(\bar{B}(r,\cos\theta)-
  \bar{B_n}(r,\cos\theta)\right)\sin^{N}\theta\,{\rm
  d}\theta\longrightarrow 0\quad (n\to\infty)\end{eqnarray*}
which concludes the proof.
\end{proof}

\subsection{A continuity estimate for product measures}
\label{sec:cont-estim-prod}

We shall now prove a continuity property for product measures which will prove
useful for the construction of weak measure solutions.

\begin{proposition}[A continuity property of product
  measures]\label{prop2.2}
  Let $0\le s_j<\infty$, $\{\mu_j^n\}_{n=1}^{\infty}\subset {\mathcal
    B}_{s_j}^{+}(\mathbb{R}^{N_j})\,,\,\mu_{j}\in {\mathcal
    B}_{0}^{+}(\mathbb{R}^{N_j})$ satisfy
\begin{equation}\label{(2.5)}
\sup\limits_{n\ge 1}\|\mu_j^n\|_{s_j}<\infty,\, \quad j=1, 2,\dots,
k\,;
\end{equation}
\begin{equation}\label{(2.6)}
\lim_{n\to\infty}\int_{\mathbb{R}^{N_j}}\vp_j{\rm d}\mu_j^n
=\int_{\mathbb R^{N_j}}\vp_j{\rm d}\mu_j\,,\quad \forall\,\vp_j\in
C_c^{\infty}(\mathbb{R}^{N_j}), \, \quad j=1, 2,\dots, k \,.
\end{equation}
Then
\begin{equation}\label{(2.7)}
  \mu_j\in {\mathcal
    B}_{s_j}^{+}(\mathbb{R}^{N_j}),\quad\|\mu_j\|_{s_j}\le
  \liminf_{n\to\infty}\|\mu_j^n\|_{s_j},\quad
  j=1, 2,\dots, k\,.
\end{equation}
Moreover if $\Psi_n, \Psi\in C(\mathbb{R}^{N_1}\times
\mathbb{R}^{N_2}\times\cdots \times \mathbb{R}^{N_k})$ satisfy
\begin{equation}\label{(2.8)}
\lim_{|{\bf x}|\to \infty}\sup_{n\ge 1}\fr{|\Psi_n({\bf
x})|}{\sum_{j=1}^{k}\langle x_j\rangle^{s_j}} =0\,,\quad
\lim_{n\to\infty}\sup_{|{\bf x}|\le R} |\Psi_n({\bf x})-\Psi({\bf
x})|=0
\end{equation}
for all $0<R<\infty$, where ${\bf x}=(x_1,x_2, \dots,x_k) \in
\prod_{j=1}^k\mathbb{R}^{N_j} $, then
\begin{equation}\label{(2.9)}
\lim_{n\to\infty}\int_{\prod_{j=1}^k\mathbb{R}^{N_j}} \Psi_n {\rm
d}(\mu_1^n\otimes\mu_2^n\otimes\cdots\otimes\mu_k^n) =
\int_{\prod_{j=1}^k\mathbb{R}^{N_j}}\Psi {\rm
d}(\mu_1\otimes\mu_2\otimes\cdots\otimes\mu_k)\,.
\end{equation}
\end{proposition}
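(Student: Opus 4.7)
For part \eqref{(2.7)}, my plan is to test the weak convergence \eqref{(2.6)} against the family of compactly supported smooth functions $\chi_R(x)\langle x\rangle^{s_j}$, where $\chi_R\in C_c^\infty(\mathbb{R}^{N_j})$ is a cutoff with $0\le\chi_R\le 1$ and $\chi_R\equiv 1$ on $\{|x|\le R\}$. Each such function lies in $C_c^\infty(\mathbb{R}^{N_j})$, so
\[
\int_{\mathbb{R}^{N_j}}\chi_R(x)\langle x\rangle^{s_j}\,{\rm d}\mu_j
=\lim_{n\to\infty}\int_{\mathbb{R}^{N_j}}\chi_R(x)\langle x\rangle^{s_j}\,{\rm d}\mu_j^n
\le \liminf_{n\to\infty}\|\mu_j^n\|_{s_j}\,.
\]
Letting $R\nearrow\infty$ and applying monotone convergence on the left (legitimate because $\mu_j\ge 0$) yields $\mu_j\in{\mathcal B}_{s_j}^+(\mathbb{R}^{N_j})$ together with the bound \eqref{(2.7)}.

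For part \eqref{(2.9)}, the plan is a truncation scheme combined with tensor product approximation. Fix $\epsilon>0$ and use the first clause of \eqref{(2.8)} to pick $R>0$ such that $|\Psi_n(\mathbf{x})|\le\epsilon\sum_j\langle x_j\rangle^{s_j}$ on $\{|\mathbf{x}|>R\}$ uniformly in $n$; since $\Psi$ is a pointwise limit of $\Psi_n$ (uniform convergence on compacts implies pointwise), $\Psi$ obeys the same estimate. Take $\chi_R\in C_c^\infty(\prod_j\mathbb{R}^{N_j})$ a cutoff supported in $\{|\mathbf{x}|\le 2R\}$ and equal to $1$ on $\{|\mathbf{x}|\le R\}$, and split $\Psi_n=\chi_R\Psi_n+(1-\chi_R)\Psi_n$. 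Thanks to the product structure of $\mu_1^n\otimes\cdots\otimes\mu_k^n$, the tail contribution is bounded by
\[
\int(1-\chi_R)|\Psi_n|\,{\rm d}\big(\mu_1^n\otimes\cdots\otimes\mu_k^n\big)\le \epsilon\sum_{j=1}^k\|\mu_j^n\|_{s_j}\prod_{i\ne j}\|\mu_i^n\|_0\le C\epsilon,
\]
uniformly in $n$ by \eqref{(2.5)}, and the analogous bound holds at the limit using \eqref{(2.7)}. In the bulk part, I first replace $\Psi_n$ by $\Psi$ inside the integral, producing an error controlled by $\|\Psi_n-\Psi\|_{L^\infty(\{|\mathbf{x}|\le 2R\})}\prod_j\|\mu_j^n\|_0$, which vanishes as $n\to\infty$ by the second clause of \eqref{(2.8)}.

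The remaining step is the main obstacle: establishing
\[
\int_{\prod_j\mathbb{R}^{N_j}}\chi_R\Psi\,{\rm d}\big(\mu_1^n\otimes\cdots\otimes\mu_k^n\big)\longrightarrow \int_{\prod_j\mathbb{R}^{N_j}}\chi_R\Psi\,{\rm d}\big(\mu_1\otimes\cdots\otimes\mu_k\big)
\]
for the fixed continuous compactly supported function $\chi_R\Psi$ on the product space, since the hypothesis \eqref{(2.6)} only gives marginal weak convergence against $C_c^\infty$ test functions. My plan here is to invoke the Stone--Weierstrass theorem on a large compact cube containing $\operatorname{supp}(\chi_R)$: the algebra generated by tensor products $\prod_j\psi_j$ with $\psi_j\in C_c^\infty(\mathbb{R}^{N_j})$, when restricted to that cube, separates points and contains constants (by choosing $\psi_j\equiv 1$ on the relevant slice), so for any $\delta>0$ some finite sum $\sum_{\ell=1}^L\prod_j\psi_j^{(\ell)}$ approximates $\chi_R\Psi$ within $\delta$ in sup norm. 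Each such tensor product integrates by Fubini as $\prod_j\int\psi_j^{(\ell)}\,{\rm d}\mu_j^n$, and each factor converges by \eqref{(2.6)}; the sup-norm approximation error contributes at most $\delta\prod_j(\|\mu_j^n\|_0+\|\mu_j\|_0)$, which is uniformly bounded. Sending $n\to\infty$, then $\delta\to 0$, then $\epsilon\to 0$ yields \eqref{(2.9)}.
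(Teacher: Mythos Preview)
Your approach is essentially the same as the paper's: tail/bulk truncation using \eqref{(2.8)} and \eqref{(2.5)}, replacement of $\Psi_n$ by $\Psi$ on the bulk, and approximation of the compactly supported bulk integrand by finite sums of tensor products so that \eqref{(2.6)} can be applied factor by factor. The paper uses Weierstrass polynomial approximation where you invoke Stone--Weierstrass, but the mechanism is identical.

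There is, however, one technical gap in your final step. You apply Stone--Weierstrass \emph{on the cube} to get $\bigl|\chi_R\Psi-\sum_\ell\prod_j\psi_j^{(\ell)}\bigr|<\delta$ there, but you then bound the integrated error by $\delta\prod_j\|\mu_j^n\|_0$ as if this were a global sup-norm bound. Since your $\psi_j^{(\ell)}\in C_c^\infty(\mathbb{R}^{N_j})$ are not required to be supported inside the slices of the cube, the tensor sum $\sum_\ell\prod_j\psi_j^{(\ell)}$ need not vanish outside the cube, and you have no control on it there. The paper avoids this by taking the cutoff in \emph{product form} $\prod_j\chi_j^R(x_j)$ from the outset: after approximating $\Psi$ by a polynomial $P$ on $\{|\mathbf{x}|\le 2kR\}$, the function $P\prod_j\chi_j^R$ is automatically a finite sum of $C_c^\infty$ tensor products, and the difference $(\Psi-P)\prod_j\chi_j^R$ is globally bounded by $\varepsilon$ because $\prod_j\chi_j^R$ is supported inside the cube. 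You can repair your argument the same way---multiply your Stone--Weierstrass approximant by a product cutoff that is identically $1$ on $\operatorname{supp}\chi_R$---or alternatively invoke the locally compact version of Stone--Weierstrass for $C_0\bigl(\prod_j\mathbb{R}^{N_j}\bigr)$, which gives the $\delta$-approximation directly in global sup norm.
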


\begin{proof}[Proof of Proposition~\ref{prop2.2}] First \eqref{(2.7)}
  easily follows from Fatou's Lemma.  Let us prove
  \eqref{(2.9)}. Let
\[
M=\sup_{n\ge 1}\left\{\|\mu_1^n\|_{s_1}\,,\,\|\mu_2^n\|_{s_2}\,,\,\dots\,\,,
\|\mu_k^n\|_{s_k}\right\},
\]
$$
\nu^n=\mu_1^n\otimes\mu_2^n\otimes\cdots\otimes\mu_k^n\,,\quad
\nu=\mu_1\otimes\mu_2\otimes\cdots\otimes\mu_k\,. $$ By assumption on
$\Psi_n,\Psi$, for any $\vep>0$ there exist $R\ge 1, n_{\vep}\ge 1$ such that
\begin{equation}\label{(2.10)}
\left|\Psi_n({\bf x})\right|\,,\,\,|\Psi({\bf
  x})|<\vep\sum_{j=1}^{k}\langle x_j\rangle^{s_j} ,\quad \forall \,|{\bf x}|>
R,\quad \forall\, n\ge n_{\vep};
\end{equation}
\begin{equation}\label{(2.11)}
\left|\Psi_n({\bf x})-\Psi({\bf x})\right|<\vep,\qquad
\forall \, |{\bf x}|\le 2kR\,,\quad \forall\, n\ge n_{\vep}.
\end{equation}
On the other hand, by polynomial approximation, there exists a
polynomial $P({\bf x})$ such that
\begin{equation}\label{(2.12)}
\left|\Psi({\bf  x})-P({\bf x})\right|<\vep\qquad \forall\,|{\bf x}|\le 2kR\,.
\end{equation}

Choose $\chi_{j}^R\in C_{c}^{\infty}(\mathbb{R}^{N_j})$ satisfying $0\le
\chi_{j}^R(x_j)\le 1$ on $\mathbb{R}^{N_j}$ and $\chi_{j}^R(x_j)=1$ for
$|x_j|\le R$ and $\chi_{j}^R(x_j)=0$ for $|x_j|\ge 2R$.  If we write $P({\bf
x})=\sum_{i=1}^{m}\prod_{j=1}^{k}P_{i,j}(x_j) $ where $m\in \mathbb{N}$ and
$P_{i,j}(x_j)$ are polynomials in $x_j$, then
$$P({\bf x})\prod_{j=1}^{k}\chi_{j}^R(x_j)=
\sum_{i=1}^{m}\prod_{j=1}^{k}\vp_{i,j}(x_j)
$$
where $\vp_{i,j}(x_j)=P_{i,j}(x_j)\chi_{j}^R(x_j)$. Then consider the
decomposition:
\begin{eqnarray*}
  && \int_{\prod_{j=1}^k \mathbb{R}^{N_j}}\Psi_n{\rm d}\nu^n- \int_{\prod_{j=1}^k\mathbb{R}^{N_j}}\Psi{\rm d}\nu=
  \int_{\prod_{j=1}^k\mathbb{R}^{N_j}}\Psi_n\left(1-{\prod}_{j=1}^{k}\chi_{j}^R\right)
  {\rm d}\nu^n \\
  &&+ \int_{\prod_{j=1}^k\mathbb{R}^{N_j}}\left(\Psi_n-\Psi\right)
  {\prod}_{j=1}^{k}\chi_{j}^R{\rm
    d}\nu^n+\int_{\prod_{j=1}^k\mathbb{R}^{N_j}}
  (\Psi-P){\prod}_{j=1}^{k}\chi_{j}^R{\rm d}\nu^n \\
  &&+
  \left[\sum_{i=1}^{m}\prod_{j=1}^{k}\int_{\prod_{j=1}^k\mathbb{R}^{N_j}}
 \vp_{i,j}{\rm d}\mu^n_j-\sum_{i=1}^{m}\prod_{j=1}^{k}
    \int_{\prod_{j=1}^k\mathbb{R}^{N_j}}\vp_{i,j}{\rm d}\mu_j \right]
  \\
  &&+\int_{\prod_{j=1}^k\mathbb{R}^{N_j}}(P-\Psi) {\prod}_{j=1}^{k}\chi_{j}^R{\rm
    d}\nu +\int_{\prod_{j=1}^k{\bf
      R}^{N_j}}\Psi\Big({\prod}_{j=1}^{k}\chi_{j}^R-1\Big) {\rm d}\nu
  \\
  &&:= I_{n,1}+ I_{n,2}+I_{n,3}+I_{n,4}+I_{5}+I_{6}.
\end{eqnarray*}
Since $1-{\prod}_{j=1}^{k}\chi_{j}^R(x_j) =0$ for all $|{\bf x}|\le
R$, and ${\prod}_{j=1}^{k}\chi_{j}^R(x_j)=0$ for all $|{\bf
  x}|>2kR$, it follows from
\eqref{(2.10)}-\eqref{(2.11)}-\eqref{(2.12)} that for all $n\ge
n_{\vep}$ \begin{eqnarray*}&& |I_{n,1}|+|I_6|\le 2\vep\int_{|{\bf x}|>R}
\sum_{j=1}^{k}\langle x_j\rangle^{s_j}{\rm d}\nu^n \le 2\vep kM^k\,,\\
&& |I_{n,2}|+|I_{n,3}|+|I_5|\le 2\vep\int_{|{\bf x}|\le 2kR}{\rm
  d}\nu^n + \vep\int_{|{\bf x}|\le 2kR}{\rm d}\nu \le 3\vep
M^k\,.\end{eqnarray*}

 For
$I_{n,4}$, since  $\vp_{i,j}\in C_{c}^{\infty}(\mathbb{R}^{N_j})$, it follows
from the assumption of the lemma that \begin{eqnarray*}&& I_{n,4}=
\sum_{i=1}^{m}\left({\prod}_{j=1}^{k}\int_{\mathbb{R}^{N_j}}\vp_{i,j} {\rm
d}\mu_{j}^n -{\prod}_{j=1}^{k}\int_{\mathbb{R}^{N_j}}\vp_{i,j} {\rm
d}\mu_j\right)\to 0 \quad (n\to\infty).\end{eqnarray*} Therefore
$$\limsup_{n\to\infty}\left|\int_{\prod_{j=1}^{k}\mathbb{R}^{N_j}}
\Psi_n{\rm d}\nu^n- \int_{\prod_{j=1}^{k}\mathbb{R}^{N_j}}\Psi{\rm
d}\nu\right| \le 5kM^k\vep\,.$$ This proves \eqref{(2.9)} by letting $\vep\to
0^+$.
\end{proof}

\subsection{Weighted Lipschitz regularity of the collision operator}
\label{sec:lipsch-regul-coll}

Let us prove some (weighted) Lipschitz properties on the collision
operator acting on Borel measures, in the (weighted) total variation
topology.

\begin{proposition}[A weighted Lipschitz property on the collision
  operator]
\label{prop1.4}  Let $B(z,\sg)$ be given by
  \eqref{(1.B)}-\eqref{(1.2)}-\eqref{(1.3)} with $b(\cdot)$
  satisfying {\bf (H4)}. Then
\[
Q^{\pm}: {\mathcal B}_{s+\gm}(\mathbb{R}^N)\times {\mathcal
B}_{s+\gm}(\mathbb{R}^N)\to {\mathcal B}_{s}(\mathbb{R}^N)\quad (s\ge 0)
\] are bounded
and
\begin{equation}\label{(5.9)}
\left\|Q^{\pm}(\mu,\nu)\right\|_s\le
2^{(s+\gm)/2}A_0\left(\|\mu\|_{s+\gm}\|\nu\|_0+\|\mu\|_0
\|\nu\|_{s+\gm}\right)\,,
\end{equation}
\begin{equation}\label{(5.10)}
\left\|Q^{\pm}(\mu,\mu)-Q^{\pm}(\nu,\nu)\right\|_s \le 2^{(s+\gm)/2}A_0
\left(\|\mu+\nu\|_{s+\gm}\|\mu-\nu\|_{0}+\|\mu+
\nu\|_{0}\|\mu-\nu\|_{s+\gm}\right)
\end{equation}
and hence
\begin{equation}\label{(5.11)}
  \left\|Q(\mu,\mu)-Q(\nu,\nu)\right\|_{0}\le
  2^{1+(s+\gm)/2}A_0 \left(\|\mu+\nu\|_{\gm}\|\mu-\nu\|_{0}+\|\mu+\nu\|_{0}\|\mu-\nu\|_{\gm}\right)\,.
\end{equation}

Finally for all $\mu\in {\mathcal B}_{\gm}(\mathbb{R}^N)$ and all $\vp\in
  C_b^2(\mathbb{R}^N)$, there holds
\begin{equation}\label{(5.12)}
\left\langle Q(\mu,\mu),\,\vp \right\rangle=\int_{\mathbb{R}^N}\vp {\rm
d}Q(\mu,\mu)
\end{equation}
where the left-hand side of \eqref{(5.12)} is defined in \eqref{(1.11)}.
\end{proposition}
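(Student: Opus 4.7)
The plan is to reduce all four assertions to a single pointwise weighted bound on $L_B[\psi](v,v_*)$ and on $A(v-v_*)\psi(v)$, and then invoke the Riesz--Markov theorem together with the dual characterization \eqref{(5.1)} of the weighted total variation norm.

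First I would prove the key pointwise estimate: for $\psi\in L^{\infty}_{-s}\cap C(\mathbb{R}^N)$ with $\|\psi\|_{L^{\infty}_{-s}}\le 1$, energy conservation gives $\langle v'\rangle^2\le \langle v\rangle^2+\langle v_*\rangle^2$; combining this with the kernel majorant $B(v-v_*,\sg)\le \langle v-v_*\rangle^\gm b(\cos\theta)$, the bound $\langle v-v_*\rangle\le 2\max(\langle v\rangle,\langle v_*\rangle)$, and the elementary inequality $(a^2+b^2)^{r/2}\le 2^{r/2}(a^r+b^r)$ for $r\ge 0$, one derives
\begin{equation*}
|L_B[\psi](v,v_*)|\le 2^{(s+\gm)/2}A_0\bigl(\langle v\rangle^{s+\gm}+\langle v_*\rangle^{s+\gm}\bigr),
\end{equation*}
and an analogous, simpler bound for $A(v-v_*)|\psi(v)|$. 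After decomposing $\mu,\nu$ into Jordan components, Riesz--Markov produces $Q^{\pm}(\mu,\nu)\in \mathcal{B}_s(\mathbb{R}^N)$ through \eqref{(1.15)}--\eqref{(1.16)}, and the dual formula \eqref{(5.1)} combined with the pointwise bound gives \eqref{(5.9)} at once.

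The Lipschitz estimate \eqref{(5.10)} then follows from bilinearity via the telescoping identity
\begin{equation*}
Q^{\pm}(\mu,\mu)-Q^{\pm}(\nu,\nu)=Q^{\pm}(\mu-\nu,\mu)+Q^{\pm}(\nu,\mu-\nu),
\end{equation*}
applied termwise with \eqref{(5.9)}; the sum of the two bounds regroups, using the Jordan decomposition of $\mu-\nu$ and $\|\mu\|_{s+\gm}+\|\nu\|_{s+\gm}=\|\mu+\nu\|_{s+\gm}$ for nonnegative measures, into exactly the right-hand side of \eqref{(5.10)}. The estimate \eqref{(5.11)} is then an immediate consequence of \eqref{(5.10)} with $s=0$, together with $Q=Q^+-Q^-$ and $\|Q\|_0\le \|Q^+\|_0+\|Q^-\|_0$.

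For the consistency relation \eqref{(5.12)}, I would expand $\Dt\vp=\vp(v')+\vp(v_*')-\vp(v)-\vp(v_*)$ inside \eqref{(1.11)}. The substitution $\sg\to-\sg$ swaps $v'\leftrightarrow v_*'$ while leaving $B$ invariant under the standard physical symmetry $b(\cos\theta)=b(-\cos\theta)$, so $L_B[\vp(v_*')]=L_B[\vp(v')]$; and the identity $L_B[\vp(v)](v,v_*)=A(v-v_*)\vp(v)$, combined with the $v\leftrightarrow v_*$ symmetry of $d\mu\otimes d\mu$ and evenness of $A$, yields $\tfrac{1}{2}\iint L_B[\vp(v)+\vp(v_*)]\,d\mu\,d\mu=\iint A(v-v_*)\vp(v)\,d\mu\,d\mu$. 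Summing gain and loss pieces reproduces $\int\vp\,dQ^+(\mu,\mu)-\int\vp\,dQ^-(\mu,\mu)=\int\vp\,dQ(\mu,\mu)$, which is \eqref{(5.12)}; integrability is guaranteed by the first step specialized to $\vp\in C^2_b\subset L^{\infty}_0$ together with $\mu\in\mathcal{B}_\gm$. The main technical hurdle is tracking the sharp constant $2^{(s+\gm)/2}$ in the first step: a naive combination of $\langle v-v_*\rangle\le\sqrt 2\,\langle v\rangle\langle v_*\rangle$ with $\langle v'\rangle^2\le \langle v\rangle^2+\langle v_*\rangle^2$ overcounts a factor $2^{\gm/2}$, so one must work in the WLOG region $\langle v\rangle\ge\langle v_*\rangle$, where $\langle v-v_*\rangle\le 2\langle v\rangle$ and $\langle v'\rangle\le\sqrt 2\,\langle v\rangle$ combine directly.
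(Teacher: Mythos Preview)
Your overall strategy matches the paper's proof closely, but there are two concrete gaps.

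\textbf{On \eqref{(5.10)}.} Your telescoping identity
\[
Q^{\pm}(\mu,\mu)-Q^{\pm}(\nu,\nu)=Q^{\pm}(\mu-\nu,\mu)+Q^{\pm}(\nu,\mu-\nu),
\]
combined with \eqref{(5.9)}, produces a bound involving $\|\mu\|_{s+\gm}+\|\nu\|_{s+\gm}$ and $\|\mu\|_0+\|\nu\|_0$, not $\|\mu+\nu\|_{s+\gm}$ and $\|\mu+\nu\|_0$. Your remedy, namely $\|\mu\|+\|\nu\|=\|\mu+\nu\|$, holds only for nonnegative $\mu,\nu$, whereas the proposition is stated for arbitrary signed measures in $\mathcal B_{s+\gm}(\mathbb{R}^N)$. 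The paper instead uses the \emph{symmetrized} polarization
\[
Q^{\pm}(\mu,\mu)-Q^{\pm}(\nu,\nu)=\tfrac{1}{2}Q^{\pm}(\mu+\nu,\mu-\nu)+\tfrac{1}{2}Q^{\pm}(\mu-\nu,\mu+\nu),
\]
after which \eqref{(5.9)} applied to each half gives exactly \eqref{(5.10)} with no sign restriction and no regrouping needed.

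\textbf{On \eqref{(5.12)}.} You invoke the ``standard physical symmetry'' $b(\cos\theta)=b(-\cos\theta)$ to make $B$ invariant under $\sg\to-\sg$ alone. This evenness is \emph{not} among the hypotheses. The paper avoids it by performing the combined substitution $\sg\to-\sg$ \emph{and} $v\leftrightarrow v_*$: under this pair, $\frac{v-v_*}{|v-v_*|}\cdot\sg$ is unchanged (two sign flips cancel), so $B(v-v_*,\sg)$ is automatically invariant, while $v'$ is sent to $v_*'$. Together with the symmetry of ${\rm d}\mu\otimes{\rm d}\mu$ this yields
\[
\int_{\mathbb{R}^N}\vp\,{\rm d}Q^{+}(\mu,\mu)=\tfrac{1}{2}\iint\!\!\int_{\mathbb{S}^{N-1}}B(v-v_*,\sg)\bigl(\vp(v')+\vp(v_*')\bigr)\,{\rm d}\sg\,{\rm d}\mu(v)\,{\rm d}\mu(v_*)
\]
without any extra assumption on $b$.

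A minor remark on the constant: your concern about overcounting $2^{\gm/2}$ is unnecessary. The paper combines $\langle v'\rangle^s\le(\langle v\rangle^2+\langle v_*\rangle^2)^{s/2}$ with $(1+|v-v_*|^2)^{\gm/2}\le 2^{\gm/2}(\langle v\rangle^2+\langle v_*\rangle^2)^{\gm/2}$ directly, and then uses $(x+y)^{(s+\gm)/2}\le 2^{\max((s+\gm)/2-1,\,0)}(x^{(s+\gm)/2}+y^{(s+\gm)/2})$; since $\gm\le 2$, the resulting exponent $\gm/2+\max((s+\gm)/2-1,0)$ is always $\le (s+\gm)/2$. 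No WLOG splitting is needed.
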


\begin{proof}[Proof of Proposition \ref{prop1.4}] By elementary
  inequalities
$$
 \langle v'\rangle^s\le (\langle v\rangle^2+\langle v_*\rangle^2)^{s/2},\quad
  \left(1+|v-v_*|^2\right)^{\gm/2}\le 2^{\gm/2}\left(\langle
  v\rangle^2+\langle
v_*\rangle^2\right)^{\gm/2}
$$ and the assumption on $B$ we have for any $\vp\in C_c(\mathbb{R}^N)$ with
$\|\vp\|_{L^{\infty}_{-s}}\le 1$
$$|\vp(v')| B(v-v_*,\sg)\le \langle
v'\rangle^s\left(1+|v-v_*|^2\right)^{\gm/2}b(\cos\theta)
 \le 2^{(s+\gm)/2}(\langle v\rangle ^{s+\gm}+\langle v_*\rangle ^{s+\gm})b(\cos\theta)$$
and hence
$$
\intt_{\mathbb{R}^N \times \mathbb{R}^N} L_B\left[|\vp|\right](v,v_*){\rm
d}(|\mu|\otimes|\nu|)\le A_02^{(s+\gm)/2}
\left(\|\mu\|_{s+\gm}\|\nu\|_{0}+\|\mu\|_0\|\nu\|_{s+\gm}\right)\,,
$$
$$
\intt_{\mathbb{R}^N \times \mathbb{R}^N}A(v-v_*)|\vp(v)|{\rm d}(|\mu|\otimes
|\nu|) \le A_0 2^{(s+\gm)/2}
\left(\|\mu\|_{s+\gm}\|\nu\|_{0}+\|\mu\|_0\|\nu\|_{s+\gm}\right)\,.
$$
These imply \eqref{(5.9)}. The inequality \eqref{(5.10)} follows from
\eqref{(5.9)} and the following identities: \begin{eqnarray*}&&
Q^{\pm}(\mu,\mu)-Q^{\pm}(\nu,\nu)=\fr{1}{2}Q^{\pm}(\mu+\nu, \mu-\nu)
+\fr{1}{2}Q^{\pm}(\mu-\nu, \mu+\nu) \,.\end{eqnarray*} Next recall
$B(v-v_*,\sg)=\bar{B}(|v-v_*|,\fr{v-v_*}{|v-v_*|}\cdot\sg)$. By changing
variables $\sg\to -\sg$, $v\leftrightarrow v_*$ and using Fubini's theorem we
have $$\int_{\mathbb{R}^N}\vp{\rm
d}Q^{+}(\mu,\mu)=\fr{1}{2}\intt_{\mathbb{R}^N\times\mathbb{R}^N}
\Big(\int_{{\mathbb{S}^{N-1}}}B(v-v_*,\sg) \Big( \vp(v')+\vp(v_*')\Big)
d\sg\Big){\rm d}\mu(v){\rm d}\mu(v_*).$$ A similar symmetry for
$\int_{\mathbb{R}^N}\vp{\rm d}Q^{-}(\mu,\mu)$ is obvious. The difference of
the two is equal to $\langle Q(\mu,\mu),\vp\rangle$. This proves
\eqref{(5.12)}.
\end{proof}

\section {Moment estimates on the collision operator}
\label{sec3}

In this section we shall prove several inequalities on the moments of the
collision operator which will be useful for the moment estimates of the weak
measure solutions we shall construct.

\subsection{Analytical toolbox}
\label{sec:analytical-toolbox}

Let us first collect and prove some useful analytical results.

\begin{lemma}[Fractional binomial expansion] \label{lem3.1}
  Let $p\ge 1$ and $k_p=[(p+1)/2]$ the integer part of
  $(p+1)/2$. Then for all $x,y\ge 0$
  \begin{equation*}
\sum_{k=0}^{k_p-1}\left(\begin{array}{ll} p
      \\
      k\end{array}\right)\left( x^ky^{p-k}+x^{p-k}y^k\right)\le
  (x+y)^p\le \sum_{k=0}^{k_p}\left(\begin{array}{ll} p
      \\
      k\end{array}\right)\left( x^ky^{p-k}+x^{p-k}y^k\right)
\end{equation*}
  where
$$\left(\begin{array}{ll} p
\\
k\end{array}\right)=\fr{p(p-1)\cdots (p-k+1)}{k!}\,,\quad k\ge
1\,;\quad\left(\begin{array}{ll} p
\\
0\end{array}\right)=1\,.$$
\end{lemma}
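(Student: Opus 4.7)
By the homogeneity of the inequalities in $(x,y)$ I may take $y = 1$, $x = t \ge 0$, and reduce to
\begin{equation*}
 \sum_{k=0}^{k_p - 1}\binom{p}{k}(t^k + t^{p-k}) \le (1+t)^p \le \sum_{k=0}^{k_p}\binom{p}{k}(t^k + t^{p-k}).
\end{equation*}
Setting $\Phi_m(t) := (1+t)^p - \sum_{k=0}^{m}\binom{p}{k}(t^k + t^{p-k})$ for $m \in \{k_p - 1,\, k_p\}$, the substitution $t \mapsto 1/t$ yields the functional identity $\Phi_m(t) = t^p\,\Phi_m(1/t)$ for all $t > 0$. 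Thus $\Phi_m(t)$ and $\Phi_m(1/t)$ have the same sign, so it suffices to prove both inequalities on $t \in [0,1]$, or equivalently under $x \le y$.

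For integer $p$, the result follows from the classical binomial theorem combined with the symmetry $\binom{p}{k} = \binom{p}{p-k}$: averaging the two expansions of $(x+y)^p$ gives
\begin{equation*}
 2(x+y)^p = \sum_{k=0}^p \binom{p}{k}(x^k y^{p-k} + x^{p-k}y^k).
\end{equation*}
For $p = 2m$ (so $k_p = m$), isolating the central index yields $(x+y)^p = \sum_{k=0}^{k_p - 1}\binom{p}{k}(x^k y^{p-k} + x^{p-k}y^k) + \binom{p}{m}(xy)^m$, so the lower bound falls short of $(x+y)^p$ by $\binom{p}{m}(xy)^m \ge 0$ while the upper exceeds it by the same quantity. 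For $p = 2m+1$ (so $k_p = m+1$) the lower bound is already an equality, and the upper exceeds $(x+y)^p$ by $\binom{p}{m+1}(xy)^m(x+y) \ge 0$.

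For non-integer $p \ge 1$, I would instead use Newton's binomial series $(1+t)^p = \sum_{k=0}^\infty \binom{p}{k} t^k$, which is absolutely convergent on $[0,1]$ since $p > 0$. Because $k_p \le p$, all of $\binom{p}{0}, \dots, \binom{p}{k_p}$ are strictly positive; they remain positive for $k \le \lfloor p \rfloor + 1$ and thereafter alternate in sign. Rearranging, the upper bound reduces to
\begin{equation*}
 \sum_{k=0}^{k_p}\binom{p}{k} t^{p-k} \ge \sum_{k=k_p + 1}^\infty \binom{p}{k} t^k, \quad t \in [0,1],
\end{equation*}
and analogously for the lower bound. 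The main obstacle — and the heart of the argument in this case — is carrying out this comparison: the specific choice $k_p = [(p+1)/2]$ is what balances the exponents $p-k$ (in the finite sum on the left) against $k$ (in the infinite tail on the right) so that a termwise domination on $[0,1]$ combined with a pairwise grouping of the alternating tail beyond $k = \lfloor p \rfloor + 2$ closes the estimate, using the recursion $\binom{p}{k+1} = \binom{p}{k}(p-k)/(k+1)$ to control the relative sizes of consecutive coefficients.
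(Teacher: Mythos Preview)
The paper does not actually prove this lemma: its entire proof reads ``We refer to \cite[Lemma~2]{MR2096050} for the proof.'' So there is no in-paper argument to compare against; your attempt goes well beyond what the authors provide.

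Your reductions are correct and standard: the homogeneity reduction to $y=1$, $x=t$, and the functional identity $\Phi_m(t)=t^p\Phi_m(1/t)$ allowing restriction to $t\in[0,1]$ are exactly the right first moves (and match the approach in the cited reference). The integer case is complete and correct.

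For non-integer $p$, however, you have not given a proof --- only an outline. You correctly identify that the problem reduces to comparing the finite sum $\sum_{k=0}^{k_p}\binom{p}{k}t^{p-k}$ against the tail $\sum_{k>k_p}\binom{p}{k}t^k$ on $[0,1]$, and you correctly note that the tail beyond $k=\lfloor p\rfloor+1$ is alternating with decreasing magnitudes (so it is $\le 0$, which disposes of it for the upper bound). But the remaining comparison of $\sum_{k=0}^{k_p}\binom{p}{k}t^{p-k}$ with $\sum_{k=k_p+1}^{\lfloor p\rfloor+1}\binom{p}{k}t^k$ is asserted (``termwise domination \dots closes the estimate'') rather than carried out. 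This step is not automatic: one must actually exhibit a matching of indices and verify both the coefficient inequality $\binom{p}{j}\ge\binom{p}{k}$ and the exponent inequality $p-j\le k$ simultaneously, and the lower bound requires a separate (reversed) argument where the alternating tail now helps rather than hurts. None of this is written down. As it stands, the non-integer case is a plausible plan, not a proof; to complete it you would need to execute the pairing explicitly, which is precisely what the cited reference does.
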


\begin{proof}[Proof of Lemma~\ref{lem3.1}] We refer to
  \cite[Lemma~2]{MR2096050} for the proof.
\end{proof}

Let $p\ge 1$ and $n\in\{1, 2,\dots, [p]\}$. Then using Taylor's
formula for the function $x\mapsto (1+x)^p$ one has
\[\sum_{k=0}^{n}\left(\begin{array}{ll} p
    \\
    k\end{array}\right)x^k\le (1+x)^p\,\quad \forall\, x\ge 0\,.
\]
In particular
\begin{equation}\label{(3.1)}
\sum_{k=0}^{n}\left(\begin{array}{ll} p
\\
k\end{array}\right)\le 2^p \,,\quad 1\le n\le p\,.
\end{equation}

Let $\Gm(x),{\rm B}(x,y)$ be the gamma and beta functions:
$$\Gm(x)=\int_{0}^{\infty} t^{x-1} e^{-t}{\rm d}t\,,\quad x>0\,;\quad
{\rm B}(x,y)=\int_{0}^{1} t^{x-1}(1-t)^{y-1}{\rm d}t\,,\quad
x,y>0\,.$$ It is well known that
\begin{equation}\label{(3.2)}
\Gm(x)\Gm(y)=\Gm(x+y){\rm B}(x,y)\,,\quad \forall \, x, y>0\,.
\end{equation}
Other relations that we shall also use are: For any
integer $k \ge 1$ and for any real number $p\ge k$ we have
\begin{equation}\label{(3.3)}
\left(\begin{array}{ll} p
\\
k\end{array}\right)=\fr{\Gm(p+1)}{\Gm(p-k+1)\Gm(k+1)}\,.
\end{equation}
And
\begin{equation}\label{(3.4)}
{\rm B}(x+1,y)+{\rm B}(x,y+1)={\rm B}(x,y)\,,\quad x,y>0\,.
\end{equation}

\begin{lemma}[A stationary phase result] \label{lem3.2} Let $0< \alpha, R<\infty$, $g\in
  C([0,R])$ and $S\in C^1([0,R])$ such that
$$
S(0)=0, \quad S'(x)<0 \qquad \forall\,  x \in [0,R)\,.
$$
Then for any $\lambda \ge 1$ we have
$$\int_{0}^{R}x^{\alpha-1}g(x)e^{\ld S(x)} {\rm d}x=\Gm(\alpha)
\left(\fr{1}{-\ld S'(0)}\right)^{\alpha}\Big(g(0)+o(1)\Big)$$ where $o(1)\to
0$ as $\ld\to\infty$.
\end{lemma}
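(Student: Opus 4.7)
The plan is to treat this as a standard Watson--Laplace asymptotic: for large $\lambda$ the integrand is concentrated in a small neighborhood of $x=0$, where $S$ may be linearized as $S(x)\approx S'(0)x$. Set $c:=-S'(0)>0$. For any fixed $\epsilon\in(0,c)$, the continuity of $g$ at $0$ and the differentiability of $S$ at $0$ allow me to choose $\delta\in(0,R)$ such that on $[0,\delta]$
$$|g(x)-g(0)|\le\epsilon\quad\text{and}\quad |S(x)-S'(0)x|\le\epsilon x,$$
the latter giving the two-sided linearization $-(c+\epsilon)x\le S(x)\le-(c-\epsilon)x$. I would then split $\int_0^R = \int_0^\delta + \int_\delta^R$ and treat each piece separately.

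For the tail, the hypothesis $S'<0$ on $[0,R)$ (together with $S\in C^1$) forces $S$ to be strictly decreasing on $[0,R]$, so $\sup_{[\delta,R]}S = S(\delta)<0$ and
$$\left|\int_\delta^R x^{\alpha-1}g(x)e^{\lambda S(x)}{\rm d}x\right|\le \frac{R^\alpha}{\alpha}\max_{[0,R]}|g|\,e^{\lambda S(\delta)}=o(\lambda^{-\alpha})$$
as $\lambda\to\infty$, exponentially fast. For the bulk, introduce $J(\lambda):=\int_0^\delta x^{\alpha-1}e^{\lambda S(x)}{\rm d}x$ and insert the sandwich on $S$; the substitutions $u=\lambda(c\pm\epsilon)x$ yield
$$\frac{1}{(\lambda(c+\epsilon))^\alpha}\int_0^{\lambda(c+\epsilon)\delta}u^{\alpha-1}e^{-u}{\rm d}u \le J(\lambda) \le \frac{\Gamma(\alpha)}{(\lambda(c-\epsilon))^\alpha}.$$
Since $\int_0^{M}u^{\alpha-1}e^{-u}{\rm d}u\to\Gamma(\alpha)$ as $M\to\infty$, this gives $\Gamma(\alpha)/(c+\epsilon)^\alpha \le \liminf_\lambda \lambda^\alpha J(\lambda) \le \limsup_\lambda \lambda^\alpha J(\lambda) \le \Gamma(\alpha)/(c-\epsilon)^\alpha$. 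Replacing $g(x)$ by $g(0)$ under the bulk integral produces a residual bounded by $\epsilon J(\lambda)$ in absolute value.

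Putting everything together,
$$\lambda^\alpha \int_0^R x^{\alpha-1}g(x)e^{\lambda S(x)}{\rm d}x = g(0)\,\lambda^\alpha J(\lambda) + O(\epsilon)\,\lambda^\alpha J(\lambda) + o(1),$$
so sending first $\lambda\to\infty$ and then $\epsilon\to 0^+$ extracts the claimed asymptotic $\Gamma(\alpha)(-\lambda S'(0))^{-\alpha}(g(0)+o(1))$. There is no genuine conceptual obstacle here; the argument is pure bookkeeping. The only mild care required is to compare $J(\lambda)$ directly with the explicit Gamma integral rather than using a crude sup-bound on $e^{\lambda S(x)}$ over $[0,\delta]$, which would lose the essential $\lambda^{-\alpha}$ rate.
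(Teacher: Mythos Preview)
Your argument is correct and is precisely the standard Watson--Laplace method that the paper has in mind; the paper itself omits the proof entirely, calling it ``classical stationary phase type of analysis.'' Your bookkeeping (the $\epsilon$-sandwich on $S$ near the origin, the exponential tail bound using $S(\delta)<0$, and the comparison of $J(\lambda)$ with the incomplete Gamma integral) is exactly what is required, so there is nothing to add.
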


\begin{proof}[Proof of Lemma~\ref{lem3.2}] This is
classical stationary phase type of analysis, we
  omit the proof for the sake of conciseness of this paper.
\end{proof}

\begin{lemma}[An estimate on the beta function]\label{lem3.3}
Let $p\ge 3$ and $k_p=[(p+1)/2]$. Then
\begin{equation}\label{(3.5)}
\sum_{k=1}^{k_p}\left(\begin{array}{ll} p
    \\
    k\end{array}\right){\rm B}(k,p-k)\le 4\log p\,.
\end{equation}
More generally for any
$a>1$ we have
\begin{equation}\label{(3.6)}
\sum_{k=1}^{k_p}\left(\begin{array}{ll} p
\\
k\end{array}\right){\rm B}(ak,a(p-k))\le C_a (ap)^{1-a}\,,
\end{equation}
\begin{equation}\label{(3.7)}
\sum_{k=0}^{k_p-1}\left(\begin{array}{ll} p
-2\\
\,\,\,\,k\end{array}\right){\rm B}(a(k+1),a(p-k-1)) \le C_a (ap)^{-a}
\end{equation}
where $0<C_a<\infty$ only depends on $a$.
\end{lemma}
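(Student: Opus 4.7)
The three bounds are proved by related techniques combining the integral representation ${\rm B}(x,y)=\int_0^1 t^{x-1}(1-t)^{y-1}{\rm d}t$ and the identity ${\rm B}(x,y)=\Gamma(x)\Gamma(y)/\Gamma(x+y)$. For (3.5) the computation is exact: since $\Gamma(k)=(k-1)!$ for $k\in\mathbb{N}$ and $\Gamma(p+1)=p\Gamma(p)$, one obtains $\binom{p}{k}{\rm B}(k,p-k)=p/[k(p-k)]=1/k+1/(p-k)$; summing from $k=1$ to $k_p$ with $\sum_{k=1}^{k_p}\frac{1}{k}\le 1+\log k_p$ and $\sum_{k=1}^{k_p}\frac{1}{p-k}\le 2$ (valid for $p\ge 3$ since $p-k_p\ge (p-1)/2$) gives a total $\le 4\log p$.

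For (3.6), my plan is to interchange sum and integral. Writing $\phi(t):=t^a+(1-t)^a$, the identity
\[
\phi(t)^p-t^{ap}-(1-t)^{ap}=\sum_{k=1}^{p-1}\binom{p}{k}t^{ak}(1-t)^{a(p-k)}
\]
(holding for integer $p$, and extended to real $p$ by the generalized binomial series combined with an alternating-tail argument for $k>p$) yields
\[
\sum_{k=1}^{k_p}\binom{p}{k}{\rm B}(ak,a(p-k))\le\int_0^1\frac{\phi(t)^p-t^{ap}-(1-t)^{ap}}{t(1-t)}\,{\rm d}t.
\]
By symmetry $t\leftrightarrow 1-t$ it suffices to bound the integral on $[0,1/2]$. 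Convexity of $\phi$ (for $a>1$) gives $\phi(t)\le 1-c_a t$ on this interval with $c_a:=2(1-2^{1-a})>0$, and the mean value theorem yields $\phi(t)^p-(1-t)^{ap}\le p\,t^a\,\phi(t)^{p-1}\le p\,t^a\,e^{-c_a(p-1)t}$. The integral then reduces to $2p\int_0^{1/2}t^{a-1}e^{-c_a(p-1)t}\,{\rm d}t\le 2p\,\Gamma(a)(c_a(p-1))^{-a}$, which is bounded by $C_a(ap)^{1-a}$ for $p\ge 3$.

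The proof of (3.7) follows the same scheme: after reindexing $j=k+1$, the identity $\sum_{j=1}^\infty\binom{p-2}{j-1}t^{aj}(1-t)^{a(p-j)}=t^a(1-t)^a\phi(t)^{p-2}$ yields $\sum_{k=0}^{k_p-1}\binom{p-2}{k}{\rm B}(a(k+1),a(p-k-1))\le\int_0^1 t^{a-1}(1-t)^{a-1}\phi(t)^{p-2}\,{\rm d}t$, bounded by $C_a(ap)^{-a}$ via the same convexity estimate $\phi(t)\le 1-c_at$ and symmetry. The main technical subtlety in both (3.6) and (3.7) is controlling the tail $k>p$ of the generalized binomial expansion when $p$ is non-integer: the alternating-series criterion applies because the ratio $|a_{k+1}|/|a_k|=((k-p)/(k+1))(t/(1-t))^a<1$ on $t\in[0,1/2]$ and $k\ge\lceil p\rceil$, ensuring the tail is non-negative and legitimating the integral bound. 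The constant $C_a\sim c_a^{-a}$ blows up as $a\to 1^+$ (reflecting the degeneration of the decay of $\phi$), but remains finite for each fixed $a>1$, as the lemma requires.
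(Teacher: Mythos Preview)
Your argument for \eqref{(3.5)} is correct and essentially identical to the paper's.

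For \eqref{(3.6)} and \eqref{(3.7)} your approach shares the paper's integral-representation idea, and your convexity bound $\phi(t)\le 1-c_a t$ on $[0,1/2]$ (the chord of the convex function) is a genuinely nice simplification: it replaces the paper's stationary-phase Lemma~\ref{lem3.2} by an elementary exponential estimate, and the subsequent bound $\phi(t)^p-(1-t)^{ap}\le p\,t^a\,\phi(t)^{p-1}\le p\,t^a e^{-c_a(p-1)t}$ is correct and yields the right order $(ap)^{1-a}$.

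However, there is a real gap in the step where you claim
\[
\sum_{k=1}^{k_p}\binom{p}{k}{\rm B}(ak,a(p-k))\le\int_0^1\frac{\phi(t)^p-t^{ap}-(1-t)^{ap}}{t(1-t)}\,{\rm d}t
\]
for non-integer $p$. Your alternating-tail argument, as you state it, only applies on $[0,1/2]$ (the generalized binomial series $\sum_k\binom{p}{k}t^{ak}(1-t)^{a(p-k)}$ diverges on $(1/2,1)$), and even there it only yields that the tail $\sum_{k\ge\lceil p\rceil}$ is non-negative. This gives
\[
\sum_{k=1}^{k_p}\binom{p}{k}t^{ak}(1-t)^{a(p-k)}\le \phi(t)^p-(1-t)^{ap}\qquad (t\in[0,1/2]),
\]
\emph{without} the extra $-t^{ap}$; and it says nothing about the integrand on $(1/2,1)$, where the terms $t^{ak}(1-t)^{a(p-k)}$ do not appear in the convergent expansion of $\phi(t)^p$. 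Consequently the displayed integral inequality is not justified by your argument. (In fact the symmetrized pointwise inequality $\sum_{k=1}^{k_p}\binom{p}{k}[x^ky^{p-k}+x^{p-k}y^k]\le (x+y)^p-x^p-y^p$ is \emph{false} in general: by Lemma~\ref{lem3.1} the reverse inequality holds.)

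The paper deals with exactly this obstruction by splitting off the boundary term $k=k_p$: it uses Lemma~\ref{lem3.1} (the two-sided fractional Povzner inequality, valid for all $x,y\ge 0$ and all real $p\ge 1$) to bound the symmetrized sum $\sum_{k=1}^{k_p-1}$ by $\phi(t)^p-t^{ap}-(1-t)^{ap}$, and estimates the single remaining term $\binom{p}{k_p}{\rm B}(ak_p,a(p-k_p))$ directly via Stirling's formula, obtaining a contribution of order $(ap)^{-1}2^{-(a-1)p}$ which is negligible. Your convexity estimate then handles the integral part perfectly well. The same split (with $k_p-1$ replaced by $k_{p-2}-1$) is needed for \eqref{(3.7)}. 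With this correction your proof goes through and is somewhat more elementary than the paper's.
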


\begin{proof}[Proof of Lemma~\ref{lem3.3}] Since $p\ge 3$ we have
  \begin{eqnarray*}&&\sum_{k=1}^{k_p}\left(\begin{array}{ll} p
      \\
      k\end{array}\right){\rm B}(k,p-k)
  =\sum_{k=1}^{k_p}\fr{p}{k(p-k)}=\sum_{k=1}^{k_p}\Big(
  \fr{1}{k}+\fr{1}{p-k}\Big) \le 2\sum_{k=1}^{k_p} \fr{1}{k}\le 4\log
  p \,.\end{eqnarray*} Now suppose $a>1$. Let \begin{eqnarray*}&&
  \sum_{k=1}^{k_p}\left(\begin{array}{ll} p
      \\
      k\end{array}\right){\rm B}(ak,a(p-k)) =I_a(p)+I_a(p, k_p)\end{eqnarray*}
  where
$$ I_a(p)=\sum_{k=1}^{k_p-1}\left(\begin{array}{ll} p
      \\
      k\end{array}\right){\rm B}(ak,a(p-k)),\quad
I_a(p, k_p)=\left(\begin{array}{ll} p
    \\
    k_p\end{array}\right){\rm B}(ak_p,a(p-k_p)).$$
For the first term $I_a(p)$ we use the symmetry (w.r.t $x=1/2$) and Lemma
\ref{lem3.1} to get \begin{eqnarray*}&&
  I_a(p)= \fr{1}{2}
  \int_{0}^{1}\fr{1}{x(1-x)}\Big\{\sum_{k=1}^{k_p-1}\left(\begin{array}{ll}
      p
      \\
      k\end{array}\right)\Big(x^{ak}(1-x)^{a(p-k)}+
  x^{a(p-k)}(1-x)^{ak}\Big)\Big\} {\rm d}x \\
  &&\qquad \,\,\,\le \fr{1}{2} \int_{0}^{1}\fr{1}{x(1-x)}\Big\{
  \Big(x^a+(1-x)^a\Big)^{p}-x^{ap}-(1-x)^{ap}\Big\} {\rm d}x
  \\
  &&\qquad \,\,\, = \int_{0}^{1/2}\fr{1}{x(1-x)}\Big\{
  \Big(x^a+(1-x)^a\Big)^{p}-x^{ap}-(1-x)^{ap}\Big\} {\rm
    d}x\,. \end{eqnarray*} Omitting the negative term $-x^{ap}$ we
have
$$( x^a+(1-x)^a)^{p}-x^{ap}-(1-x)^{ap} \le
p( x^a+(1-x)^a)^{p-1} x^{a}$$ so that
$$ I_a(p)\le p\int_{0}^{1/2}x^{a-1} g_1(x) e^{pS(x)}{\rm d}x $$ where
$g_1(x)=(1-x)^{-1}( x^a+(1-x)^a)^{-1}$ and $S(x)=\log(x^a+(1-x)^a)$,
$x\in[0,1/2]$. Since $g_1(0)=1, S(0)=0$ and
$$ S'(0)=-a\,,\quad S'(x)=\fr{a\Big(x^{a-1}-(1-x)^{a-1}\Big)}{ x^a+(1-x)^a}<0
\qquad \forall\, x\in[0,1/2) $$  (because $a>1$) it follows from Lemma
\ref{lem3.2} that for all $p\ge 3$ \begin{eqnarray*}&& I_a(p)\le
C_ap\Gm(a)\left(\fr{1}{pa}\right)^{a}=
 C_a{(ap)}^{1-a}\,.\end{eqnarray*}
For the second term $I_a(p, k_p)$ we use Stirling's formula
$$\Gm(x)=\left(\fr{x}{e}\right)^{x}
\sqrt{\fr{2\pi}{x}}\, e^{\fr{\theta_x}{12 x}} \,,\quad
\Gm(x+1)=x\Gm(x)=\left(\fr{x}{e}\right)^{x} \sqrt{2\pi x}\,
e^{\fr{\theta_x}{12 x}}\,,\quad x\ge 1$$ ($0<\theta_x<1$) to compute
\begin{eqnarray}\label{(3.8)} && \quad I_a(p, k_p)=
\fr{\Gm(p+1)}{\Gm(k_p+1)\Gm(p-k_p+1)}\cdot\fr{\Gm(ak_p
)\Gm(a(p-k_p))}{\Gm(ap)}
\\ \nonumber &&
\le e^{1/4}\fr{\sqrt{a}}{ap}\left(\fr{k_p}{p}\right)^{(a-1)k_p}
\left(\fr{p-k_p}{p}\right)^{(a-1)(p-k_p)}\,
\left(\fr{p}{k_p}\right)\left(\fr{p}{p-k_p}\right) \le C_a
\fr{1}{ap}\left(\fr{1}{2}\right)^{(a-1)p} \,.
\end{eqnarray}
Here in the last inequality we used the simple estimates
\[
\fr{p-1}{2}\le p-k_p \le \fr{p+1}{2}
\]
for $p\ge 3$.  This proves \eqref{(3.6)} because $a>1$.

In order to prove \eqref{(3.7)} we consider again a decomposition
\begin{eqnarray*}
&& \sum_{k=0}^{k_p-1}\left(\begin{array}{ll} p
    -2\\
    \,\,\,\,k\end{array}\right){\rm B}(a(k+1),a(p-k-1)) =J_a(p)+J_a(p,
k_p)
\end{eqnarray*}
where for the first term $J_a(p)$ we use that
$k_p-2=[(p-1)/2]-1=k_{p-2}-1$ and Lemma \ref{lem3.1} to get
\begin{eqnarray*}
&& J_a(p):=\sum_{k=0}^{k_{p}-2}\left(\begin{array}{ll} p
    -2\\
    \,\,\,\,k\end{array}\right){\rm B}(a(k+1),a(p-k-1)) \\
&&= \fr{1}{2}\int_{0}^{1}x^{a-1}(1-x)^{a-1}
\sum_{k=0}^{k_{p-2}-1}\left(\begin{array}{ll} p
    -2 \\
    \,\,\,\,k\end{array}\right)\Big(
x^{ak}(1-x)^{a(p-2-k)}+x^{a(p-2-k)}(1-x)^{ak} \Big){\rm d}x\\
&& \le\fr{1}{2}\int_{0}^{1}x^{a-1}(1-x)^{a-1}\Big(x^a +(1-x)^a\Big)^{p-2}{\rm
d}x =\int_{0}^{1/2} x^{a-1} g_2(x) e^{p
  S(x)}{\rm d}x
\end{eqnarray*}
with $g_2(x)=(1-x)^{a-1}( x^a+(1-x)^a)^{-2}.$ Since $a>1$, it follows
from Lemma \ref{lem3.2} that
$$J_a(p)\le C_a\left(\fr{1}{
ap}\right)^{a}\,.$$ For the second term $J_a(p, k_p)$ we use \eqref{(3.8)} to
get
\begin{eqnarray*}
&& J_a(p,k_p):= \left(\begin{array}{ll} p
-2\\
k_p-1\end{array}\right){\rm B}(ak_p,a(p-k_p)) = \fr{(p-k_p)k_p}{p(p-1)}I_a(p,
k_p) \le C_a \fr{1}{ap} \left(\fr{1}{2}\right)^{(a-1)p}\,.
\end{eqnarray*}
Since $a>1$,
this proves the lemma.
\end{proof}

\subsection{An estimate of the angular cutoff
  reminder}
\label{sec:angular cutoff reminder}

\begin{lemma}\label{lem3.4}
  Suppose $b(\cdot)$ satisfies the assumption {\bf (H0)}.  For all
  $p\ge 3$ we define
\begin{equation}\label{(3.9)}
\vep_p:=\fr{2}{A_2} \left|\mathbb{S}^{N-2}\right|\int_{0}^{\pi}
\left\{
 \int_{0}^{1} t\left(1-\fr{\sin^2\theta}{2}
t\right)^{p-2} {\rm d}t\right\} b(\cos\theta)\sin^{N}\theta\, {\rm d}\theta\,\quad (\le 1)\,.
\end{equation}
Then $\,\vep_p\to 0\,\, (p\to\infty).\, $ Furthermore, if either $0<\gm\le 1$
or {\bf(H2)} is satisfied, then
\begin{equation}\label{(3.10)}
p^{2-2/\gamma}\vep_p \to 0\qquad (p\to\infty)\,.
\end{equation}
\end{lemma}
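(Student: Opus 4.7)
The plan is to recast the quantity as
$$\vep_p = \fr{2\,|\mathbb{S}^{N-2}|}{A_2} \int_{0}^{\pi} I(\theta,p)\, b(\cos\theta)\,\sin^{N}\theta\,{\rm d}\theta, \qquad I(\theta,p) := \int_{0}^{1} t\left(1-\fr{\sin^2\theta}{2}\,t\right)^{p-2}\!{\rm d}t,$$
and to derive both conclusions from dominated convergence with suitable pointwise bounds on $I(\theta,p)$. First, the crude bound $I(\theta,p)\le \int_{0}^{1}t\,{\rm d}t = 1/2$ gives $\vep_p\le 1$. Setting $a:=\sin^2\theta/2\in(0,1/2]$ and changing variables $s=at$ with $s\le a$ on $[0,a]$ provides the sharper estimate
$$I(\theta,p) = \fr{1}{a^{2}} \int_{0}^{a} s(1-s)^{p-2}\,{\rm d}s \le \fr{1}{a}\cdot\fr{1-(1-a)^{p-1}}{p-1}\le \fr{1}{(p-1)\,a} = \fr{2}{(p-1)\sin^{2}\theta}.$$
For each fixed $\theta\in(0,\pi)$ this yields $I(\theta,p)\to 0$ as $p\to\infty$, and since the integrand in the definition of $\vep_p$ is dominated by $\fr{1}{2}b(\cos\theta)\sin^{N}\theta$ (integrable on $(0,\pi)$ by {\bf (H0)}), dominated convergence delivers $\vep_p\to 0$.

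For the refined statement $p^{2-2/\gm}\vep_p\to 0$, the case $0<\gm\le 1$ is immediate since then $2-2/\gm\le 0$, so $p^{2-2/\gm}\le 1$ for $p\ge 1$ and the first part applies. The non-trivial case is {\bf (H2)}, where $\nu:=2-2/\gm\in(0,1)$ and $\sin^{N-2\nu}\theta\,b(\cos\theta)\in L^{1}((0,\pi))$. Interpolating the two bounds $I\le 1/2$ and $I\le 1/((p-1)a)$ with weight $\nu\in(0,1)$ gives
$$I(\theta,p)\le \left(\fr{1}{2}\right)^{\!1-\nu}\!\left(\fr{1}{(p-1)a}\right)^{\!\nu}= \fr{2^{2\nu-1}}{(p-1)^{\nu}\sin^{2\nu}\theta},$$
so that for every $p\ge 2$,
$$p^{\nu}\,I(\theta,p)\,\sin^{N}\theta\le \left(\fr{p}{p-1}\right)^{\!\nu}2^{2\nu-1}\,\sin^{N-2\nu}\theta\le 2^{3\nu-1}\,\sin^{N-2\nu}\theta.$$
This furnishes a $p$-uniform integrable dominator thanks to {\bf (H2)}; combined with the pointwise decay $p^{\nu}I(\theta,p)\le p^{\nu}/((p-1)a)\to 0$ for each $\theta\in(0,\pi)$ (which uses $\nu<1$), a final dominated convergence applied to
$$p^{\nu}\vep_p=\fr{2\,|\mathbb{S}^{N-2}|}{A_2}\int_{0}^{\pi}\bigl(p^{\nu}I(\theta,p)\bigr)\,b(\cos\theta)\,\sin^{N}\theta\,{\rm d}\theta$$
yields the desired $p^{\nu}\vep_p\to 0$.

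The delicate point is the choice of the intermediate bound on $I$. Replacing it with the more natural $I(\theta,p)\le 1/(a^{2}p(p-1))$ (obtained by enlarging the inner integration to $[0,1]$ and recognizing $B(2,p-1)$) would, after interpolation, lead to a singularity like $\sin^{-4\alpha}\theta$ that cannot be matched to the integrability of {\bf (H2)} for any single value of $\alpha$ producing the desired power $p^{\nu}$. The linear-in-$1/a$ estimate $I\le 1/((p-1)a)$ is precisely tuned so that interpolation with exponent $\nu$ produces the singularity $\sin^{-2\nu}\theta$ which meshes exactly with the integrability threshold $N-2\nu$ of {\bf (H2)}, while simultaneously preserving pointwise decay for the convergence step.
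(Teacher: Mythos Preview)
Your proof is correct and follows essentially the same strategy as the paper: both reduce to dominated convergence in $\theta$ with the dominator $b(\cos\theta)\sin^{N-2\nu}\theta$ furnished by {\bf (H2)}. The only technical variation is that you first integrate out $t$ and interpolate the two bounds $I\le 1/2$ and $I\le 1/((p-1)a)$ with exponent $\nu$, whereas the paper keeps the double integral and bounds the integrand pointwise via the elementary fact $(\lambda x)^{\nu}(1-x)^{\lambda}<1$ for $x\in[0,1]$; both routes produce the same $\sin^{-2\nu}\theta$ singularity matched to {\bf (H2)}.
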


\begin{proof}[Proof of Lemma~\ref{lem3.4}]
  Under the assumption {\bf (H0)}, the convergence $\vep_p\to 0\,
  (p\to\infty)$ is obvious and hence \eqref{(3.10)} holds for $0< \gm\le 1$. Suppose {\bf (H2)} is satisfied, which means that $\nu=2-2/\gm\in (0,1)$ and $\theta\mapsto b(\cos\theta)\sin^{N-2\nu}\theta $ is integrable on $[0,\pi]$. For all $p\ge 3$ we have
  \begin{equation}\label{(3.11)}
   p^{\nu}\vep_p
  \le C\int_{0}^{\pi}\left\{
  \int_{0}^{1}\left((p-2)\fr{\sin^2\theta}{2} t
  \right)^{\nu}\left(1-\fr{\sin^2\theta}{2}t\right)^{p-2} {\rm d}t \right\}
  b(\cos\theta)\sin^{N-2\nu}\theta\,{\rm
    d}\theta
\end{equation}
where $C$ depends only on $N,A_2$ and $\nu$.  Applying elementary estimates
$$0\le (\ld x)^{\nu}(1-x)^{\ld}<1\,,\quad (\ld x)^{\nu}
(1-x)^{\ld}\to 0\quad (\ld\to\infty)\quad \forall\, x\in[0,1]$$
to $\ld=p-2$ and $
x= \fr{\sin^2\theta}{2} t$  we conclude from \eqref{(3.11)} and the dominated
convergence theorem that $p^{\nu}\vep_p\to 0\,\,( p\to\infty).$
\end{proof}

\begin{remark} It is easily calculated that if the
assumption {\bf (H4)} is satisfied, i.e. if $A_0<\infty$, then $\vep_p\le
\frac{16A_0}{A_2}\fr{1}{p}$ for all $p\ge 3$, so that in case
  $0<\gm <2$ we have $p^{2-2/\gm}\vep_p\le \fr{16A_0}{A_2} p^{1-2/\gm}\,.$
\end{remark}

\subsection{Moment estimates on the kernel $L_B$}
\label{sec:moment-estim-kern}

In this subsection we shall prove moment estimates on the kernel $L_B$
as defined in \eqref{(1.6)}.

\begin{lemma}\label{lem3.5} Let $B(z,\sg)=|z|^{\gm}b(\cos\theta)$.

\begin{itemize}
\item[(I)] Under the assumption {\bf (H0)} we have
for all $p\ge 3$
\begin{eqnarray}\label{(3.12)} &&\qquad
L_B \left[\Dt\langle \cdot\rangle^{2p}\right](v,v_*)\\ \nonumber && \quad \le
-\fr{A_2}{4}\Big(\langle v\rangle^{2p+\gm} +\langle v_*\rangle^{2p+\gm}\Big)
+\fr{A_2}{2}\Big(\langle v\rangle^{2p}\langle v_*\rangle^{\gm}+\langle
v_*\rangle^{2p}\langle v\rangle^{\gm} \Big)
\\ \nonumber
&&\quad +A_2\sum_{k=1}^{k_p}\left(\begin{array}{ll} p
\\
k\end{array}\right)\Big(\langle v\rangle^{2k+\gm}\langle v_*\rangle^{2(p-k)}+
\langle v\rangle^{2(p-k)+\gm}\langle v_*\rangle^{2k} \Big)\\ \nonumber &&\quad
+A_2\sum_{k=1}^{k_p}\left(\begin{array}{ll} p
\\ \nonumber
k\end{array}\right)\Big(\langle v\rangle^{2k}\langle v_*\rangle^{2(p-k)+\gm}+
\langle
v\rangle^{2(p-k)}\langle v_*\rangle^{2k+\gm} \Big)\\
&&\quad +2 p(p-1)A_2 \vep_p \sum_{k=0}^{k_p-1}\left(\begin{array}{ll} p-2
\\ \nonumber
\,\,\,\,k\end{array}\right)\Big(\langle v\rangle^{2(k+1)+\gm}\langle
v_*\rangle^{2(p-1-k)}+ \langle v\rangle^{2(p-1-k)+\gm}\langle v_*\rangle^{2(k+1)} \Big)\\
\nonumber && \quad+2 p(p-1)A_2 \vep_p
\sum_{k=0}^{k_p-1}\left(\begin{array}{ll} p-2
\\
\,\,\,\,k\end{array}\right)\Big(\langle v\rangle^{2(k+1)}\langle
v_*\rangle^{2(p-1-k)+\gm}+
\langle v\rangle^{2(p-1-k)}\langle v_*\rangle^{2(k+1)+\gm} \Big)\,.
\end{eqnarray}

\item[(II)]
Under the assumption {\bf (H3)} which is rewritten in the form
\begin{equation}\label{H3bis}
\gm =2,\quad 1<p_1<\infty\,,\quad A_{p_1}^*:=\left|\mathbb{S}^{N-2}\right|
\left(\int_{0}^{\pi}[b(\cos\theta)]^{p_1}\sin^{N-2}\theta\,{\rm d}\theta
\right)^{1/{p_1}}<\infty
\end{equation}
and let
\begin{equation}\label{H3bis-1}
q_1=\fr{p_1}{p_1-1},\quad \eta=\fr{1}{2q_1}.
\end{equation}
Then
\begin{eqnarray}\label{(3.13)} &&
L_B\left[\Dt\langle \cdot\rangle^{2p}\right](v,v_*) \\
 \nonumber
 &&\le
 \fr{12A^*_{p_1}}{p^{\eta}}
\sum_{k=1}^{k_p}\left(\begin{array}{ll} p
\\
k\end{array}\right)\Big(\langle v\rangle^{2(k+1)}\langle v_*\rangle^{2(p-k)}+
\langle v\rangle^{2(p-k+1)}\langle v_*\rangle^{2k} \Big)\\ \nonumber
&&+\fr{12A^*_{p_1}}{p^{\eta}} \sum_{k=1}^{k_p}\left(\begin{array}{ll} p
\\
k\end{array}\right)\Big(\langle v\rangle^{2k}\langle v_*\rangle^{2(p-k+1)}+
\langle v\rangle^{2(p-k)}\langle v_*\rangle^{2(k+1)} \Big)\\ \nonumber
&&+\fr{A_0}{2}\langle v\rangle^{2p}\langle v_*\rangle^{2} + \fr{A_0}{2}\langle
v_*\rangle^{2p}\langle v\rangle^{2} -\fr{A_0}{4}\langle
v\rangle^{2(p+1)}-\fr{A_0}{4}\langle v_*\rangle^{2(p+1)}
\end{eqnarray}
for all $p\ge \left(12A^*_{p_1}/A_0\right)^{2q_1}$.
\end{itemize}
\end{lemma}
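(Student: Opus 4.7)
The proof is a Povzner-type analysis. I work from representation~\eqref{(1.25)}, writing $\langle v'\rangle^2 = a + c$ and $\langle v_*'\rangle^2 = \tilde a - c$, where
\[
a = \langle v\rangle^2 u + \langle v_*\rangle^2(1-u), \quad \tilde a = \langle v\rangle^2(1-u) + \langle v_*\rangle^2 u, \quad u = \cos^2(\theta/2),
\]
and $c = X \sin\theta\,({\bf j}\cdot\omega)$ with $X^2 = |v|^2|v_*|^2 - (v\cdot v_*)^2$. Taylor-expanding $(a+c)^p$ and $(\tilde a - c)^p$ to second order with integral remainder and then integrating over $\omega \in \mathbb{S}^{N-2}({\bf n})$, the linear-in-$\omega$ terms vanish since $\int ({\bf j}\cdot\omega) \,{\rm d}\omega = 0$, leaving the main part $|\mathbb{S}^{N-2}|(a^p + \tilde a^p)$ plus a quadratic-in-$c$ Taylor remainder $\mathcal R_p$.

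For Part~(I), I apply Lemma~\ref{lem3.1} to the $u$-polynomials $a^p$ and $\tilde a^p$, isolating the pure powers $\langle v\rangle^{2p}u^p + \langle v_*\rangle^{2p}(1-u)^p$ (and the $\tilde a$-symmetric counterparts). Subtracting $\langle v\rangle^{2p}+\langle v_*\rangle^{2p}$ produces the negative gain $-(\langle v\rangle^{2p}+\langle v_*\rangle^{2p})(1-u^p-(1-u)^p)$. The elementary inequality $1 - u^p - (1-u)^p \ge \tfrac12\sin^2\theta$ for $p \ge 3$ (verified by symmetry about $u=1/2$ and critical-point analysis, using $u(1-u) = \sin^2\theta/4$), combined with the weight transfer
\[
|v-v_*|^\gm(\langle v\rangle^{2p}+\langle v_*\rangle^{2p}) \ge \tfrac{1}{2}(\langle v\rangle^{2p+\gm}+\langle v_*\rangle^{2p+\gm}) - (\langle v\rangle^{2p}\langle v_*\rangle^\gm+\langle v_*\rangle^{2p}\langle v\rangle^\gm)
\]
(which follows from $\langle v\rangle^\gm \le 2^{\gm/2}(|v-v_*|^\gm+\langle v_*\rangle^\gm)$, valid for $\gm \in (0,2]$), yields after $\theta$-integration the first two summands of the bound with coefficients $-\tfrac{A_2}{4}$ and $\tfrac{A_2}{2}$ (the $\sin^2\theta$ factor promotes $\sin^{N-2}\theta$ into $\sin^N\theta$, producing $A_2$). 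The residual binomial sums from Lemma~\ref{lem3.1} are bounded via $u^k(1-u)^{p-k}+u^{p-k}(1-u)^k \le \tfrac12\sin^2\theta$ (using $\min(k,p-k)\ge 1$ for $1 \le k \le k_p$) together with $|v-v_*|^\gm \le 2^{\gm/2}(\langle v\rangle^\gm+\langle v_*\rangle^\gm)$, producing the four mixed $A_2\binom{p}{k}(\cdots)$ sums.

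For the Taylor remainder $\mathcal R_p$, I use the refined bound
\[
(a+tc)^{p-2}+(\tilde a - tc)^{p-2} \le 2(\langle v\rangle^2+\langle v_*\rangle^2)^{p-2}\bigl(1-\tfrac{\sin^2\theta}{2}\,t\bigr)^{p-2},
\]
obtained by exploiting the convex-combination structure $a+tc = (1-t)a + t\langle v'\rangle^2$ together with the energy identity $\langle v'\rangle^2+\langle v_*'\rangle^2 = \langle v\rangle^2+\langle v_*\rangle^2$. Combined with $c^2 \le \langle v\rangle^2\langle v_*\rangle^2\sin^2\theta$, the $t$-integral then reproduces exactly the $\vep_p$ factor of~\eqref{(3.9)}. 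Expanding $(\langle v\rangle^2+\langle v_*\rangle^2)^{p-2}\langle v\rangle^2\langle v_*\rangle^2$ via Lemma~\ref{lem3.1} (using $k_{p-2}=k_p-1$) and multiplying by $|v-v_*|^\gm \le 2^{\gm/2}(\langle v\rangle^\gm+\langle v_*\rangle^\gm)$ produces the four $\vep_p$-weighted sums with prefactor $2p(p-1)A_2\vep_p$, completing Part~(I).

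For Part~(II), under (H3) we have $\gm = 2$ and $b \in L^{p_1}$ on the sphere. I apply Hölder's inequality with exponents $(p_1,q_1)$ to the $\theta$-integral: $b$ contributes $A^*_{p_1}$, while the $L^{q_1}$-norm of the remaining angular weight (after the same $u$-polynomial expansion as in Part~(I)) is controlled by the Beta-function estimates of Lemma~\ref{lem3.3}, producing the decay $p^{-\eta}$ with $\eta = 1/(2q_1)$. The $A_0$-terms in~\eqref{(3.13)} come from the classical Povzner gain at $\gm=2$ using~(H4) (implied by~(H3)), and the threshold $p \ge (12A^*_{p_1}/A_0)^{2q_1}$ ensures the Hölder-based cross terms remain subdominant so that the $-\tfrac{A_0}{4}$ coefficient is preserved. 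The main obstacle is the sharp derivation of the factor $(1-\sin^2\theta\,t/2)^{p-2}$ in the Taylor remainder: the naive bound $(a+tc)^{p-2} \le (\langle v\rangle^2+\langle v_*\rangle^2)^{p-2}$ only yields prefactor $p(p-1)A_2$ without the $\vep_p$ decay, which would destroy the moment-production estimates in Section~\ref{sec4}. Obtaining the refined bound via the convex-combination structure of $a+tc$ and the elastic-collision identity in the appropriate averaged form is the delicate technical core of the proof.
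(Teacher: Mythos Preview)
Your Part~(I) is essentially the paper's argument. One small slip: your refined remainder bound should read $(1-\tfrac{1-t}{2}\sin^2\theta)^{p-2}$ rather than $(1-\tfrac{t}{2}\sin^2\theta)^{p-2}$. The paper obtains it from $E(\theta)+t\,h\sin\theta\,|({\bf j}\cdot\omega)|\le E(\theta)+tE(\pi-\theta)$ (via $h\sin\theta\le E(\pi-\theta)$, which is AM--GM on $2|v||v_*|\sin(\theta/2)\cos(\theta/2)$) and then $E(\pi-\theta)\ge\tfrac{\sin^2\theta}{2}(\langle v\rangle^2+\langle v_*\rangle^2)$; after the change $t\mapsto 1-t$ in the Taylor weight $\int_0^1(1-t)(\cdots)\,{\rm d}t$ one lands exactly on the $\vep_p$-integrand. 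Your ``convex-combination'' phrasing leads to the same bound, so the discrepancy is only cosmetic.

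Your Part~(II) has a real gap. The decay $p^{-\eta}$ does \emph{not} come from Lemma~\ref{lem3.3}; that lemma is invoked only later, in Lemma~\ref{lem3.6}. The paper's mechanism is different and you have missed it: one switches to representation~\eqref{(1.24)} (not~\eqref{(1.25)}), applies H\"older on the sphere to get
\[
\int_{\mathbb{S}^{N-1}} b(\cos\theta)\,\langle v'\rangle^{2p}\,{\rm d}\sigma\ \le\ A^*_{p_1}\Bigl(\tfrac{1}{|\mathbb{S}^{N-2}|}\int_{\mathbb{S}^{N-1}}\langle v'\rangle^{2pq_1}\,{\rm d}\sigma\Bigr)^{1/q_1},
\]
and then bounds the inner integral \emph{directly}. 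Writing $\langle v'\rangle^2=\tfrac12(\langle v\rangle^2+\langle v_*\rangle^2)+\tfrac12|v+v_*||v-v_*|\,({\bf h}\cdot\sigma)$ and using the monotonicity of $x\mapsto(\tfrac{1+x}{2})^{\lambda}+(\tfrac{1-x}{2})^{\lambda}$ in $|x|$ together with $|v+v_*||v-v_*|\le\langle v\rangle^2+\langle v_*\rangle^2$, one reduces to $\int_0^\pi(\tfrac{1-\cos\theta}{2})^{pq_1}{\rm d}\theta$, which is controlled by the Wallis-type bound $\int_0^{\pi/2}\sin^n\theta\,{\rm d}\theta<\sqrt{\pi/(2n)}$. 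This produces the factor $(pq_1)^{-1/2}$, and the outer $1/q_1$-power gives exactly $p^{-1/(2q_1)}=p^{-\eta}$. Only \emph{after} this step does Lemma~\ref{lem3.1} enter, to expand $(\langle v\rangle^2+\langle v_*\rangle^2)^p$. Your proposed route---expand first via the $u$-polynomials, then estimate $L^{q_1}$-norms by Beta-function sums---does not obviously yield the specific rate $p^{-1/(2q_1)}$, and in any case is not what the paper does.
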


\begin{proof}[Proof of Lemma~\ref{lem3.5}]
\mbox{ } \medskip

\noindent
 {\bf Part (I)} Let us write
$$
L_B\left[\Dt\langle \cdot\rangle^{2p}\right](v,v_*)=|v-v_*|^{\gm}
\left|\mathbb{S}^{N-2}\right| \int_{0}^{\pi}b(\cos\theta)\sin^{N}\theta\,
L_p(v,v_*,\theta)\,{\rm d}\theta
$$
with $$L_p(v,v_*,\theta):=\fr{1}{\sin^2\theta\,|\mathbb{S}^{N-2}|}
\int_{\mathbb{S}^{N-2}({\bf k})}\left(\langle v'\rangle^{2p}+ \langle
v_*'\rangle^{2p}-\langle v\rangle^{2p}-\langle v_*\rangle^{2p} \right){\rm
d}\og .$$

We first prove that
\begin{eqnarray}\label{(3.14)}
&& L_p(v,v_*,\theta)\le -\fr{1}{2}\Big(\langle v\rangle^{2p}+\langle
 v_*\rangle^{2p}\Big)\\ \nonumber
 &&+
 \fr{1}{2}\sum_{k=1}^{k_p}\left(\begin{array}{ll} p
\\ \nonumber
k\end{array}\right)\Big(\langle v\rangle^{2k} \langle
 v_*\rangle^{2p-2k} +\langle v\rangle^{2p-2k} \langle
 v_*\rangle^{2k}\Big)\\ \nonumber
 &&+2p(p-1)\int_{0}^{1} t\Big(1-\fr{\sin^2\theta}{2}
t\Big)^{p-2} dt\\ \nonumber
&&\times \sum_{k=0}^{k_p-1}\left(\begin{array}{ll}
p-2
\\
\,\,\,\,k\end{array}\right)\Big(\langle v\rangle^{2(k+1)}\langle
v_*\rangle^{2(p-1-k)}+ \langle v\rangle^{2(p-1-k)}\langle v_*\rangle^{2(k+1)}
\Big)\,.
\end{eqnarray}
To do this we denote the shorthand
\begin{eqnarray*}
&& E(\theta)=\langle v\rangle^2\cos^2\theta/2+\langle
v_*\rangle^2\sin^2\theta/2\,,\quad h=\sqrt{|v|^2|v_*|^2-\langle
v,v_*\rangle^2}\,.
\end{eqnarray*}
Then by \eqref{(1.25)}
\begin{eqnarray*}
&&\langle v'\rangle^2= E(\theta) +h\,\sin\theta\, ({\bf
j}\cdot\og)\,,\qquad\langle v_*'\rangle^2=E(\pi-\theta) -h\,\sin\theta\,({\bf
j}\cdot\og)\,.
\end{eqnarray*}
By Taylor's formula we have
\begin{eqnarray*}&&\Big(E(\theta)\pm h\,\sin\theta\, ({\bf j}\cdot\og)\Big)^p
= \Big(E(\theta)\Big)^p \pm  q\Big(E(\theta)\Big)^{p-1}
h\,\sin\theta\, ({\bf j}\cdot\og)\\
&& + p(p-1) \int_{0}^1(1-t)\Big(E(\theta)\pm  t h\,\sin\theta\, \langle {\bf
j},\og\rangle\Big)^{p-2} {\rm d}t (h\,\sin\theta\, \langle {\bf
j},\og\rangle)^2\,.\end{eqnarray*} Look at the last term: We have for all
$\theta\in(0,\pi), t\in[0,1]$
\begin{eqnarray*}&& E(\theta)+t h\,\sin\theta\,
|({\bf j}\cdot\og)| \le E(\theta) +
\Big(E(\pi-\theta)\Big)\,t\\
&&=\langle v\rangle^2+\langle v_*\rangle^2 -
 \Big(E(\pi-\theta)\Big)
(1-t)\\
&&\le \Big(\langle v\rangle^2+\langle
v_*\rangle^2\Big)\Big(1-\fr{1-t}{2}\sin^2\theta \Big)\end{eqnarray*} where we
used
$$E(\pi-\theta)\ge(\langle v\rangle^2+\langle v_*\rangle^2)\min\{\cos^2\theta/2\,,\,\sin^2\theta/2\}
\ge(\langle v\rangle^2+\langle v_*\rangle^2)\fr{\sin^2\theta}{2}\,.$$ Since
$$
\int_{\mathbb{S}^{N-2}({\bf n})}({\bf j}\cdot\og) {\rm d}\og=0
$$
it follows that
\begin{eqnarray}\label{(3.15)}
&& L_p(v,v_*,\theta)\le
\fr{1}{\sin^2\theta}\left((E(\theta))^p+(E(\pi-\theta))^p-\langle
v\rangle^{2p}- \langle v_*\rangle^{2p}\right)
\\ \nonumber
&&+2p(p-1)\left(\langle v\rangle^2+\langle v_*\rangle^2\right)^{p-2}h^2
\int_{0}^{1} t\left(1-\fr{\sin^2\theta}{2} t\right)^{p-2}{\rm d}t\,.
\end{eqnarray}
We need to prove that for $p\ge 3$ and $k_p=[(p+1)/2]$
\begin{eqnarray}\label{(3.16)} &&
\fr{1}{\sin^2\theta}\Big((E(\theta))^{p}+ (E(\pi-\theta))^{p}-\langle
v\rangle^{2p}-\langle
 v_*\rangle^{2p}\Big)\\ \nonumber
 &&
\le -\fr{1}{2}\Big(\langle v\rangle^{2p}+\langle
 v_*\rangle^{2p}\Big)+\fr{1}{2}
\sum_{k=1}^{k_p}\left(\begin{array}{ll} p
\\
k\end{array}\right)\Big(\langle v\rangle^{2k} \langle
 v_*\rangle^{2p-2k} +\langle v\rangle^{2p-2k} \langle
 v_*\rangle^{2k}\Big)\,.
\end{eqnarray}
In fact using Lemma \ref{lem3.1} we have
\begin{eqnarray*}
  && (E(\theta))^p+(E(\pi-\theta))^p\\
  &&\le \sum_{k=0}^{k_p}\left(\begin{array}{ll} p
      \\
      k\end{array}\right)\Bigg(\left[\langle v\rangle^2
    \cos^2(\theta/2)\right]^{k} \left[ \langle
    v_*\rangle^2\sin^2(\theta/2)\right]^{p-k} \\
  && \qquad \qquad \qquad \qquad +\left[\langle v\rangle^2 \cos^2(\theta/2)\right]^{p-k}
  \left[ \langle
    v_*\rangle^2\sin^2(\theta/2)\right]^{k}\Bigg)
  \\
  &&+ \sum_{k=0}^{k_p}\left(\begin{array}{ll} p
      \\
      k\end{array}\right)\Big(\left[\langle v\rangle^2
      \sin^2(\theta/2)\right]^{k} \left[ \langle
      v_*\rangle^2\cos^2(\theta/2)\right]^{p-k}\\
&& \qquad \qquad \qquad \qquad +\left[\langle v\rangle^2
\sin^2(\theta/2)\right]^{p-k}
    \left[ \langle
      v_*\rangle^2\cos^2(\theta/2)\right]^{k}\Bigg)
  \\
  &&\le \fr{\sin^2\theta}{2}\sum_{k=1}^{k_p}\left(\begin{array}{ll} p
      \\
      k\end{array}\right)\Big(\langle v\rangle^{2k} \langle
  v_*\rangle^{2p-2k}+
  \langle v\rangle^{2p-2k} \langle
  v_*\rangle^{2k}
  \Big)\\
  &&+\Big(\langle v\rangle^{2p}+\langle v_*\rangle^{2p}\Big)
  \Big(\cos^{2p}(\theta/2)+\sin^{2p}(\theta/2)\Big)
\end{eqnarray*}
where we used the fact that $p\ge 3 \Longrightarrow
p-k_p\ge 1$  so that
\begin{eqnarray*}
&&\cos^{2k}(\theta/2)\sin^{2p-2k}(\theta/2),\,
  \sin^{2k}(\theta/2)\cos^{2p-2k}(\theta/2) \le \fr{1}{4}\sin^2\theta
\end{eqnarray*}
for all  $k\in\{1,2,...,k_p\}$. Since $p\ge 3$ implies
\begin{eqnarray*}
&&\cos^{2p}(\theta/2)+\sin^{2p}(\theta/2)\le
  \cos^4(\theta/2)+\sin^4(\theta/2) =1-\fr{1}{2}\sin^2(\theta)
\end{eqnarray*}
this gives \eqref{(3.16)}.

Note that $h^2\le\langle v\rangle^2\langle v_*\rangle^2$. Then using Lemma
\ref{lem3.1} again and recalling $k_p-1=k_{p-2}=[(p-1)/2]$ we have
 \begin{eqnarray*}&&
 \Big(\langle v\rangle^2+\langle
v_*\rangle^2\Big)^{p-2}h^2\le \sum_{k=0}^{k_p-1}\left(\begin{array}{ll} p-2
\\
\,\,\,\,k\end{array}\right)\Big(\langle v\rangle^{2(k+1)}\langle
v_*\rangle^{2(p-1-k)}+ \langle v\rangle^{2(p-1-k)}\langle v_*\rangle^{2(k+1)}
\Big).
\end{eqnarray*}
This together with \eqref{(3.15)}-\eqref{(3.16)} concludes the proof of
\eqref{(3.14)}.

Now using \eqref{(3.14)} and the definitions of $L_B[\Dt\vp]$, $A_2$ and
$\vep_p$ we obtain
\begin{eqnarray} \label{(3.18)}
&& \qquad L_B\left[\Dt\langle \cdot\rangle^{2p}\right](v,v_*) \le
-\fr{A_2}{2}\Big(\langle v\rangle^{2p}+\langle
 v_*\rangle^{2p}\Big)|v-v_*|^{\gm}\\ \nonumber
 &&+\fr{A_2}{2}
 \sum_{k=1}^{k_p}\left(\begin{array}{ll} p
\\
k\end{array}\right)\Big(\langle v\rangle^{2k} \langle
 v_*\rangle^{2p-2k} +\langle v\rangle^{2p-2k} \langle
 v_*\rangle^{2k}\Big)|v-v_*|^{\gm} \\ \nonumber
 &&+p(p-1)A_2\vep_p
\sum_{k=0}^{k_p-1}\left(\begin{array}{ll} p-2
\\
\,\,\,\,k\end{array}\right) \Big(\langle v\rangle^{2(k+1)}\langle
v_*\rangle^{2(p-1-k)}+ \langle
v\rangle^{2(p-1-k)}\langle v_*\rangle^{2(k+1)} \Big)|v-v_*|^{\gm}\,.
\end{eqnarray}
Next by $0<\gm\le 2$ we have
\begin{equation}\label{(3.19)}
|v-v_*|^{\gm}\ge\fr{1}{2}\langle v\rangle^{\gm}- \langle v_*\rangle^{\gm}
\,,\quad |v-v_*|^{\gm}\ge \fr{1}{2}\langle v_*\rangle^{\gm}- \langle
v\rangle^{\gm} \,.
\end{equation}
Thus
\begin{eqnarray*}
&&\left(\langle v\rangle^{2p}+
 \langle v_*\rangle^{2p}\right)|v-v_*|^{\gm}=\langle v\rangle^{2p}|v-v_*|^{\gm}+
 \langle v_*\rangle^{2p}|v-v_*|^{\gm}\\
&&\ge\langle v\rangle^{2p}\Big(\fr{1}{2}\langle v\rangle^{\gm}- \langle
v_*\rangle^{\gm} \Big) +
 \langle v_*\rangle^{2p}\Big(\fr{1}{2}\langle v_*\rangle^{\gm}- \langle v\rangle^{\gm} \Big)
\\
&&=\fr{1}{2}\langle v\rangle^{2p+\gm} +\fr{1}{2}\langle v_*\rangle^{2p+\gm} -
\langle v\rangle^{2p}\langle v_*\rangle^{\gm}-\langle v_*\rangle^{2p}\langle
v\rangle^{\gm}\,.
\end{eqnarray*}
Since
\begin{equation}\label{(3.20)}
|v-v_*|^{\gm}
\le 2(\langle v\rangle^{\gm}+\langle v_*\rangle^{\gm})
\end{equation}
it follows that
\begin{eqnarray*}
&&\left(\langle v\rangle^{2k}\langle v_*\rangle^{2(p-k)}+ \langle
v\rangle^{2(p-k)}\langle
v_*\rangle^{2k} \right)|v-v_*|^{\gm}\\
&&\le 2\left(\langle v\rangle^{2k}\langle v_*\rangle^{2(p-k)}+ \langle
v\rangle^{2(p-k)}\langle
v_*\rangle^{2k} \right)(\langle v\rangle^{\gm}+\langle v_*\rangle^{\gm})\\
&&=2\left(\langle v\rangle^{2k+\gm}\langle v_*\rangle^{2(p-k)}+ \langle
v\rangle^{2(p-k)+\gm}\langle
v_*\rangle^{2k} \right)\\
&&+2\left(\langle v\rangle^{2k}\langle v_*\rangle^{2(p-k)+\gm}+ \langle
v\rangle^{2(p-k)}\langle v_*\rangle^{2k+\gm} \right)\,.
\end{eqnarray*}
And similarly
\begin{eqnarray*}
&&\Big(\langle v\rangle^{2(k+1)}\langle v_*\rangle^{2(p-1-k)}+ \langle
v\rangle^{2(p-1-k)}\langle
v_*\rangle^{2(k+1)} \Big)|v-v_*|^{\gm}\\
&&\le 2\Big(\langle v\rangle^{2(k+1)}\langle v_*\rangle^{2(p-1-k)}+ \langle
v\rangle^{2(p-1-k)}\langle
v_*\rangle^{2(k+1)} \Big)(\langle v\rangle^{\gm}+\langle v_*\rangle^{\gm})\\
&&=2\Big(\langle v\rangle^{2(k+1)+\gm}\langle v_*\rangle^{2(p-1-k)}+ \langle
v\rangle^{2(p-1-k)+\gm}\langle
v_*\rangle^{2(k+1)} \Big)\\
&&+2\Big(\langle v\rangle^{2(k+1)}\langle v_*\rangle^{2(p-1-k)+\gm}+ \langle
v\rangle^{2(p-1-k)+\gm}\langle v_*\rangle^{2(k+1)+\gm} \Big) \,.
\end{eqnarray*}
These together with \eqref{(3.18)} yield the estimate \eqref{(3.12)}.  \medskip

\noindent {\bf Part (II)} For any $p\ge 1$ we have
\begin{eqnarray*}
&& |v-v_*|^{-2}L_B[\Dt\langle \cdot\rangle^{2p}] (v,v_*)
=2\int_{\mathbb{S}^{N-1}}b(\cos\theta)\langle v'\rangle^{2p} {\rm
d}\sg-A_0\Big(
\langle v\rangle^{2p}+\langle v_*\rangle^{2p}\Big)\\
&&\le 2A^*_{p_1}\left(\fr{1}{|\mathbb{S}^{N-2}|}
\int_{\mathbb{S}^{N-1}}\langle
  v'\rangle^{2pq_1} {\rm d}\sg\right)^{1/{q_1}} -A_0\Big( \langle
v\rangle^{2p}+\langle v_*\rangle^{2p}\Big)
\end{eqnarray*}
where we used H\"{o}lder's inequality. We have to prove that
\begin{equation}\label{(3.21)}
\left(\fr{1}{|\mathbb{S}^{N-2}|}\int_{\mathbb{S}^{N-1}}\langle
v'\rangle^{2pq_1} {\rm d}\sg\right)^{1/{q_1}} \le \fr{3}{p^{\eta}}\Big(\langle
v\rangle^2+\langle v_*\rangle^2\Big)^{p}\,.
\end{equation}
To do this we denote $\ld=pq_1\,( >1)$. Then using elementary inequalities
$$\left(\fr{1+x}{2}\right)^{\ld}+\left(\fr{1-x}{2}\right)^{\ld} 
\le \left(\fr{1+y}{2}\right)^{\ld}+\left(\fr{1-y}{2}\right)^{\ld}
,\quad x,y\in[-1,1],\quad |x|\le |y|\,;$$
$$
|v+v_*||v-v_*|\le \langle v\rangle^2+\langle v_*\rangle^2\,,$$
and the formula \eqref{(1.24)} 
 we compute (recall that $N\ge 2$)
\begin{eqnarray*}&& \fr{1}{|\mathbb{S}^{N-2}|}\int_{\mathbb{S}^{N-1}}\langle
  v'\rangle^{2\ld} {\rm d}\sg
  \\
  && =\left(\langle v\rangle^2+\langle
    v_*\rangle^2\right)^{\ld}\int_{0}^{\pi}\sin^{N-2}\theta\left(\fr{1}{2}+\fr{|v+v_*||v-v_*|}
    {2(\langle v\rangle^2+\langle v_*\rangle^2)}\cos\theta \right)^{\ld}
  {\rm d}\theta\\
  &&\le \left(\langle v\rangle^2+\langle v_*\rangle^2\right)^{\ld}
  \int_{0}^{\pi}\left(\fr{1-\cos\theta}{2} \right)^{\ld}{\rm d}\theta
  \le\Big(\langle v\rangle^2+\langle v_*\rangle^2\Big)^{\ld}
  \sqrt{\fr{2\pi}{\ld}}\end{eqnarray*} where we used the well-known
inequality
\[
\int_{0}^{\pi/2}\sin^n\theta\,{\rm d}\theta<\sqrt{\fr{\pi}{2n}}
\]
with $n=2[\ld]$. This yields \eqref{(3.21)}.

From this and using Lemma \ref{lem3.1} we obtain that for all $p\ge  3$
\begin{eqnarray*}&& |v-v_*|^{-2} L_B\left[\Dt\langle
    \cdot\rangle^{2p}\right](v,v_*)\le \fr{6A^*_{p_1}}{p^{\eta}} \left(
    \langle v\rangle^2+\langle v_*\rangle^2\right)^{p}-A_0\left( \langle v\rangle^{2p}
    +\langle
    v_*\rangle^{2p}\right)\\
  &&\le\fr{6A^*_{p_1}}{p^{\eta}}\sum_{k=1}^{k_p}
  \left(\begin{array}{ll} p
      \\
      k\end{array}\right)\Big(\langle v\rangle^{2k} \langle v_*\rangle^{2(p-k)}+ \langle
  v\rangle^{2(p-k)} \langle v_*\rangle^{2k}
  \Big) -\Big(A_0-\fr{6A^*_{p_1}}{p^{\eta}}\Big) \Big( \langle v\rangle^{2p}
  +\langle
  v_*\rangle^{2p}\Big) \,.
\end{eqnarray*}
Since
$p\ge
\left(12A^*_{p_1}/A_0\right)^{2q_1}\Longleftrightarrow
6A^*_{p_1}/p^{\eta}\le A_0/2$,
it follows that
\begin{eqnarray*}
&& L_B\left[\Dt\langle \cdot\rangle^{2p}\right](v,v_*)
\\
&&
\le
\fr{6A^*_{p_1}}{p^{\eta}}\sum_{k=1}^{k_p}
\left(\begin{array}{ll} p
\\
k\end{array}\right)\left(\langle v\rangle^{2k} \langle
 v_*\rangle^{2(p-k)}+
 \langle v\rangle^{2(p-k)} \langle
 v_*\rangle^{2k}
\right)|v-v_*|^{2}\\
&& -\fr{A_0}{2}\left( \langle v\rangle^{2p}+\langle
  v_*\rangle^{2p}\right)|v-v_*|^{2}\qquad \forall\,p\ge \left(12A^*_{p_1}/A_0\right)^{2q_1}.
\end{eqnarray*}
Therefore as shown in the above using \eqref{(3.19)}-\eqref{(3.20)}
with $\gm=2$ we obtain \eqref{(3.13)}.
\end{proof}

\subsection{Moment estimates on the collision operator}
\label{sec:moment-estim-coll}

We shall now deduce from the moment estimates on $L_B$ in the previous
Lemma~\ref{lem3.5} some moment estimates on the collision operator.

\begin{lemma}\label{lem3.6}
Let $B(z,\sg)=|z|^{\gm}b(\cos\theta)$, $\mu\in {\mathcal
    B}_s^+(\mathbb{R}^N) $ with $\|\mu\|_0\neq 0$, $s\ge \gm +2p$, $0<\gm\le 2$,
and $p\ge 3$.

\begin{itemize}
\item[(I)] If $b(\cos\theta)$ satisfies
  the assumption {\bf (H0)}, then
\begin{eqnarray}\label{(3.22)} &&
\left\langle Q(\mu,\mu),\,\left\langle\cdot\right\rangle^{2p}\right\rangle\le
2^{2p+1} A_2\|\mu\|_2\|\mu\|_{2p}-\fr{1}{4}A_2\|\mu\|_0\|\mu\|_{2p+\gm}\,.
\end{eqnarray}
Furthermore if $0<\gm<2$,  then
\begin{eqnarray} &&
 \fr{\left\langle
      Q(\mu,\mu),\,\langle\cdot\rangle^{2p}\right\rangle
  }{\Gm(q)\|\mu\|_0} \\ \nonumber &&
  \le \Big(C_a q^{2-a} + C_a
  q^{3-a}\vep_p \Big)A_2\|\mu\|_0 Z_p^*+\fr{1}{2}\|\mu\|_2A_2Z_{q}-
  \fr{q}{16}A_2\|\mu\|_0Z_q^{1+\fr{1}{q}}
\label{(3.23)} \end{eqnarray} where $q=ap\,,\, a= 2/\gm$,
\begin{equation}
Z_q=\fr{\|\mu\|_{\gm q}}{\Gm(q)\|\mu\|_0},\qquad Z_p^*=\max_{k\in\{1,2, \dots,
k_p\}} \{Z_{ak+1}Z_{a(p-k)}\,,\,Z_{ak}Z_{a(p-k)+1}\} \label{(3.25)}
\end{equation} and the constant $0<C_a<\infty$ only depends on $a$.
\medskip

\item[(II)] If $\gm=2$ and $b(\cos\theta)$
  satisfies {\bf (H3)} which is rewritten as in \eqref{H3bis}, and let
 $p_1,q_1,\eta$ be given in \eqref{H3bis}-\eqref{H3bis-1}, then
 \begin{multline}
  \fr{\left\langle
      Q(\mu,\mu),\,\langle\cdot\rangle^{2p}\right\rangle}{\Gm(p)\|\mu\|_0}
  \\
  \le 48A^*_{p_1}p^{1-\eta}(\log p)\|\mu\|_0 \wt{Z}_p^*+
  \Big(12A^*_{p_1}p^{1-\eta}+\fr{A_0}{4}\Big)\|\mu\|_{2}Z_p
  -\fr{p}{16}A_0\|\mu\|_0Z_{p}^{1+\fr{1}{p}}
\label{(3.26)}
\end{multline}
 for all $p\ge \left(12A^*_{p_1}/A_0\right)^{2q_1}$, where
\begin{equation} Z_p=\fr{\|\mu\|_{2p}}{\Gm(p)\|\mu\|_0},\qquad
\wt{Z}_p^*=\max_{k\in\{1, \, 2, \dots,\,
  k_p\}}Z_{k+1}Z_{p-k}\,.
\label{(3.27)}
\end{equation}
\end{itemize}
\end{lemma}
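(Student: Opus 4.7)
The plan is to combine the pointwise kernel estimates from Lemma~\ref{lem3.5} with the bilinear representation \eqref{(1.11)} of the collision operator, together with standard moment interpolation and the Beta-function sums of Lemma~\ref{lem3.3}. My starting point will be to write
\[
\left\langle Q(\mu,\mu),\langle\cdot\rangle^{2p}\right\rangle = \fr{1}{2}\intt_{\mathbb{R}^N\times \mathbb{R}^N} L_B\!\left[\Dt\langle\cdot\rangle^{2p}\right](v,v_*)\,{\rm d}\mu(v){\rm d}\mu(v_*),
\]
insert the pointwise bound \eqref{(3.12)} for Part~(I) and \eqref{(3.13)} for Part~(II), and integrate term by term: each tensor monomial $\langle v\rangle^a\langle v_*\rangle^b$ yields $\|\mu\|_a\|\mu\|_b$, and several symmetric pairs collapse by the symmetry of $\mu\otimes\mu$.

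For the proof of \eqref{(3.22)} I will use the moment interpolation inequality $\|\mu\|_c\|\mu\|_d\le\|\mu\|_a\|\mu\|_b$, valid whenever $a\le c\le d\le b$ with $a+b=c+d$, which is a direct consequence of the log-convexity of $s\mapsto\log\|\mu\|_s$ (itself Hölder applied to the normalized probability measure $\mu/\|\mu\|_0$). Taking $a=\gm$, $b=2p$ and using $\gm\le 2$ (hence $\|\mu\|_\gm\le\|\mu\|_2$) this gives $\|\mu\|_{2k+\gm}\|\mu\|_{2(p-k)}\le\|\mu\|_2\|\mu\|_{2p}$, and likewise for all symmetric and shifted variants, including the $\vep_p$-sum with $(k+1,p-1-k)$ in place of $(k,p-k)$. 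The binomial sums are then closed via $\sum_{k=0}^{k_p}\binom{p}{k}\le 2^p$ from \eqref{(3.1)}, while the $\vep_p$-piece is handled with $\vep_p\le 1$ together with $\sum_{k=0}^{k_p-1}\binom{p-2}{k}\le 2^{p-2}$ and $p(p-1)\le 2^p$ for $p\ge 3$. Retaining the negative contribution $-\fr{A_2}{4}\|\mu\|_0\|\mu\|_{2p+\gm}$ from \eqref{(3.12)} then produces the sharp constant $2^{2p+1}$.

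For the refined estimates \eqref{(3.23)} and \eqref{(3.26)} I will normalize each cross moment as $\|\mu\|_{2k+\gm}=\Gm(ak+1)\|\mu\|_0\,Z_{ak+1}$ (and analogues for the other combinations), converting the prefactor $\binom{p}{k}\Gm(ak+1)\Gm(a(p-k))/\Gm(ap)$ into $ak\cdot{\rm B}(ak,a(p-k))$ via $\Gm(\alpha+1)=\alpha\Gm(\alpha)$ and the Beta identity \eqref{(3.2)}. Bounding $ak\le q$ (respectively $k\le p$) and invoking \eqref{(3.6)}-\eqref{(3.7)} from Lemma~\ref{lem3.3} closes the sums and yields the $C_a q^{2-a}$ and $C_a q^{3-a}\vep_p$ prefactors of \eqref{(3.23)}; for \eqref{(3.26)} one uses \eqref{(3.5)} instead, producing the $48A^*_{p_1}p^{1-\eta}\log p$ prefactor. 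In \eqref{(3.26)}, the $k=1$ term of the sum $\binom{p}{k}\|\mu\|_{2k}\|\mu\|_{2(p-k+1)}$ equals $p\|\mu\|_2\|\mu\|_{2p}$; since $Z_1 Z_p$ is not captured by $\wt Z_p^*$ (whose range starts at $k=1$, giving $Z_2 Z_{p-1}$), this term must be peeled off and accounts for the separate $12A^*_{p_1}p^{1-\eta}\|\mu\|_2 Z_p$ contribution.

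The principal obstacle is converting the negative piece $-\fr{A_2}{4}\|\mu\|_0\|\mu\|_{2p+\gm}$ (respectively $-\fr{A_0}{4}\|\mu\|_0\|\mu\|_{2(p+1)}$) into the superlinear production form $-\fr{q}{16}A_2\|\mu\|_0\,Z_q^{1+1/q}$ (respectively $-\fr{p}{16}A_0\|\mu\|_0\,Z_p^{1+1/p}$). Since $\gm(q+1)=2p+\gm$, I will exploit that $s\mapsto\log(\|\mu\|_s/\|\mu\|_0)$ is convex and vanishes at $s=0$, so $s\mapsto s^{-1}\log(\|\mu\|_s/\|\mu\|_0)$ is nondecreasing; applied at $s=\gm q$ and $s=\gm(q+1)$ this yields
\[
\|\mu\|_{\gm(q+1)}\ge\|\mu\|_0^{-1/q}\,\|\mu\|_{\gm q}^{1+1/q}.
\]
Substituting $\|\mu\|_{\gm q}=\Gm(q)\|\mu\|_0 Z_q$ produces a factor $\Gm(q)^{1/q}$, and Stirling's formula gives $\Gm(q)^{1/q}\ge q/4$ for $q\ge 3$, yielding the desired $q/16$. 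An identical argument with $\gm=2$ produces the $p/16$ in \eqref{(3.26)}, completing the proof.
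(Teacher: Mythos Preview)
Your proposal is correct and follows essentially the same route as the paper: integrate the pointwise bounds of Lemma~\ref{lem3.5}, interpolate the cross moments against $\|\mu\|_2\|\mu\|_{2p}$ via H\"older/log-convexity, close the binomial sums with \eqref{(3.1)} and Lemma~\ref{lem3.3}, and convert the negative moment via $\|\mu\|_{\gm(q+1)}\ge\|\mu\|_0^{-1/q}\|\mu\|_{\gm q}^{1+1/q}$ together with $\Gm(q)^{1/q}\ge q/4$. The only cosmetic difference is that the paper combines the two companion terms $Z_{ak+1}Z_{a(p-k)}$ and $Z_{ak}Z_{a(p-k)+1}$ using the Beta identity \eqref{(3.4)}, ${\rm B}(ak+1,a(p-k))+{\rm B}(ak,a(p-k)+1)={\rm B}(ak,a(p-k))$, before invoking Lemma~\ref{lem3.3}, whereas you bound $ak\le q$ (resp.\ $a(p-k)\le q$) and treat the two terms separately; both yield the same $C_a q^{2-a}$ prefactor.
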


\begin{proof}[Proof of Lemma~\ref{lem3.6}] By replacing $\mu$ with
$\mu/\|\mu\|_0$ we can assume that $\|\mu\|_0=1$.

\noindent {\bf Part (I).}  By part (I) of Lemma \ref{lem3.5} we have
\begin{eqnarray}\label{(3.Q)}
  &&\quad \left\langle Q(\mu,\mu),\,\langle\cdot\rangle^{2p}\right\rangle =
  \fr{1}{2} \intt_{\mathbb{R}^N \times \mathbb{R}^N}L_B\left[\Dt\langle
  \cdot\rangle^{2p}\right] (v,v_*)
  {\rm d}\mu(v){\rm d}\mu(v_*) \\
&&\quad \le A_2\sum_{k=1}^{k_p}\left(\begin{array}{ll} p
       \nonumber \\
      k\end{array}\right)\left(\|\mu\|_{2k+\gm}\|\mu\|_{2(p-k)}+
  \|\mu\|_{2k}\|\mu\|_{2(p-k)+\gm}\right)  \nonumber \\
  &&\quad +2 p(p-1)A_2 \vep_p \sum_{k=0}^{k_p-1}
  \left(\begin{array}{ll} p-2
  \\  \,\,\,\,k\end{array}\right) \left(\|\mu\|_{2(k+1)+\gm}
  \|\mu\|_{2(p-1-k)}+\|\mu\|_{2(k+1)}
  \|\mu\|_{2(p-1-k)+\gm}\right)  \nonumber\\
&&\quad
  +\fr{A_2}{2}\|\mu\|_{2p}\|\mu\|_{\gm} -\fr{A_2}{4}\|\mu\|_{2p+\gm}  \nonumber
  \,.\nonumber
\end{eqnarray}
Using H\"{o}lder's inequality we have (for $ s>2$)
\begin{equation}
\|\mu\|_{r}\le \|\mu\|_{2}^{\fr{s-r}{s-2}}\|\mu\|_s
^{\fr{r-2}{s-2}}\,, \quad  2\le r\le s
\label{(3.28)}
\end{equation}
from which we obtain for all $s_1, s_2\ge 2$ satisfying $s_1+s_2\le
2p+2$
\begin{eqnarray*}
&& \|\mu\|_{s_1}\|\mu\|_{s_2}\le
  \|\mu\|_{2}^{\fr{2p-s_1+2p-s_2}{2p-2}} \|\mu\|_{2p}^{\fr{s_1+s_2-4}{2p-2}}
  \le \|\mu\|_2\|\mu\|_{2p}
\end{eqnarray*}
where we used
$\|\mu\|_{2}\le \|\mu\|_{2p}$.  Thus
\begin{eqnarray*}
  \left\langle
    Q(\mu,\mu),\,\langle\cdot\rangle^{2p}\right\rangle &\le&
  4A_2\left\{\sum_{k=1}^{k_p}\left(\begin{array}{ll} p
 \\ k\end{array}\right)+2p(p-1)
  \sum_{k=0}^{k_p-1}\left(\begin{array}{ll} p-2
 \\ \,\,\,\,k\end{array}\right)\right\}\|\mu\|_2\|\mu\|_{2p}
 \\
  &+& \fr{A_2}{2}\|\mu\|_2\|\mu\|_{2p}-\fr{A_2}{4}\|\mu\|_{2p+\gm}
 \\
  &\le& 4A_2\left( 2^p-1+p(p-1)2^{p-1} \right) \|\mu\|_2\|\mu\|_{2p}
  +\fr{A_2}{2}\|\mu\|_2\|\mu\|_{2p}- \fr{A_2}{4}\|\mu\|_{2p+\gm}
\\
  &\le&2^{2p+1}
  A_2\|\mu\|_2\|\mu\|_{2p}-\fr{A_2}{4}\|\mu\|_{2p+\gm}\end{eqnarray*}
which proves \eqref{(3.22)} for $\|\mu\|_0=1$, where we used the inequality
\eqref{(3.1)} and
$$2^{p}+p(p-1)2^{p-1}\le  2^{2p-1}\,,\quad  p\ge 3\,.$$

Now suppose that $0<\gm<2$. This implies $a=2/\gm>1$. Recall definitions of
$Z_q$ and $Z_p^*$ in \eqref{(3.25)}. Then applying (\ref{(3.2)}) and
(\ref{(3.4)}) we compute for all $k\in\{1, 2,..., k_p\}$
\begin{eqnarray*}
&&\|\mu\|_{2k+\gm}\|\mu\|_{2(p-k)}+ \|\mu\|_{2k}\|\mu\|_{2(p-k)+\gm}
\\
&&=\|\mu\|_{\gm(ak+1)}\|\mu\|_{\gm a(p-k)}+ \|\mu\|_{\gm ak}\|\mu\|_{\gm
(a(p-k)+1)}
\\
&&=Z_{ak+1} Z_{a(p-k)}\Gm(ak+1)\Gm(a(p-k)) +
Z_{ak}Z_{a(p-k)+1}\Gm(ak)\Gm(a(p-k)+1)
\\
&&\le Z_p^*\Gm(ap+1)\Big({\rm B}(ak+1, a(p-k))+{\rm B}(ak, a(p-k)+1)\Big)
\\
&&=Z_p^*\Gm(q+1){\rm B}(ak, a(p-k))\,,
\end{eqnarray*}
 and for all $k\in\{0,1,...,
k_p-1\}$
\begin{eqnarray*}
&&\|\mu\|_{2(k+1)+\gm} \|\mu\|_{2(p-1-k)}+\|\mu\|_{2(k+1)}
\|\mu\|_{2(p-1-k)+\gm}
\\
&&=Z_{a(k+1)+1}Z_{a(p-k-1)}\Gm(a(k+1)+1)\Gm(a(p-1-k))
\\
&&+Z_{a(k+1)} Z_{a(p-1-k)+1}\Gm(a(k+1))\Gm(a(p-1-k)+1)
\\
&&\le Z_p^*\Gm(q+1){\rm B}(a(k+1), a(p-1-k)) \,.
\end{eqnarray*}
This together with $\Gm(q+1)/\Gm(q)=q$ and Lemma \ref{lem3.3} gives from
\eqref{(3.Q)} that
\begin{eqnarray}  \label{(3.29)}
&& \fr{\left\langle
Q(\mu,\mu),\,\langle\cdot\rangle^{2p}\right\rangle}{\Gm(q)} \le Z_p^* q
A_2\sum_{k=1}^{k_p}\left(\begin{array}{ll} p
\\
k\end{array}\right){\rm B}(ak,a(p-k))\\ \nonumber &&+ Z_p^* 2q p(p-1)A_2\vep_p
\sum_{k=0}^{k_p-1}\left(\begin{array}{ll} p-2
\\ \nonumber
\,\,\,\,k\end{array}\right){\rm B}(a(k+1), a(p-1-k))\\ \nonumber
&&+\fr{A_2\|\mu\|_2}{2}Z_q-\fr{A_2}{4}\fr{\|\mu\|_{2p+\gm}}{\Gm(q)}
\\ \nonumber
&& \le  Z_p^* A_2 C_a q^{2-a} +  Z_p^* A_2 C_a q^{3-a}\vep_p
+\fr{A_2\|\mu\|_2}{2}Z_q-\fr{A_2}{4}\fr{\|\mu\|_{2p+\gm}}{\Gm(q)} \,.
\end{eqnarray}
For the negative term we use H\"{o}lder's inequality, $\|\mu\|_0=1$,
and $q=ap=\fr{2p}{\gm }$ to get
$$\|\mu\|_{2p+\gm}\ge
\|\mu\|_{2p} ^{1+\fr{\gm}{2p}}=\|\mu\|_{\gm q} ^{1+\fr{1}{q}}
$$ and so
\begin{eqnarray}  \label{(3.30)}
\fr{\|\mu\|_{2p+\gm}}{\Gm(q)} \ge \Gm(q)^{\fr{1}{q}}\left(\fr{\|\mu\|_{\gm
q}}{\Gm(q)}\right)^{1+\fr{1}{q}}=\Gm(q)^{\fr{1}{q}}Z_q^{1+\fr{1}{q}}\ge
\fr{q}{4}Z_q^{1+\fr{1}{q}}
\end{eqnarray} where we have used the inequality $\Gm(q)^{\fr{1}{q}}\ge q/4$.
Thus \eqref{(3.23)} (with $\|\mu\|_0=1$) follows from \eqref{(3.29)}.

\noindent {\bf Part (II).} In this case we have $\gm=2$, i.e. $a=1$ so
that $q=p$ and hence (\ref{(3.30)}) becomes
$$\fr{\|\mu\|_{2(p+1)}}{\Gm(p)} \ge \fr{p}{4}
Z_p^{1+\fr{1}{p}}.$$ By part (II) of Lemma \ref{lem3.5} we have, as shown
above, that (the special term $\|\mu\|_{2k}\|\mu\|_{2(p-k+1)}$ for $k=1$ in
the sum should be treated separately)
\begin{eqnarray*}
  && \fr{\left\langle
      Q(\mu,\mu),\,\langle\cdot\rangle^{2p}\right\rangle}{\Gm(p)}
  =\fr{1}{2\Gm(p)}\intt_{\mathbb{R}^N \times \mathbb{R}^N}
  L_B\left[\Dt\langle \cdot\rangle^{2p}\right](v,v_*){\rm d}\mu(v){\rm d}\mu(v_*)\\
  &&\le \fr{1}{\Gm(p)}\cdot\fr{12A^*_{p_1}}{p^{\eta}}
  \sum_{k=1}^{k_p}\left(\begin{array}{ll} p
      \\
      k\end{array}\right) \left(\|\mu\|_{2(k+1)}\|\mu\|_{2(p-k)}+
    \|\mu\|_{2k}\|\mu\|_{2(p-k+1)} \right)\\
  &&+\fr{1}{4\Gm(p)}A_0\|\mu\|_2\|\mu\|_{2p}-
  \fr{A_0}{4}\fr{\|\mu\|_{2(p+1)}}{\Gm(p)}
  \\
  &&= \fr{1}{\Gm(p)}\cdot\fr{12A^*_{p_1}}{p^{\eta}}
  \sum_{k=2}^{k_p}\left(\begin{array}{ll} p
      \\
      k\end{array}\right) \left(\|\mu\|_{2(k+1)}\|\mu\|_{2(p-k)}+
    \|\mu\|_{2k}\|\mu\|_{2(p-k+1)} \right)
  \\
  &&+\fr{1}{\Gm(p)}\cdot\fr{12A^*_{p_1}}{p^{\eta}}\left(\begin{array}{ll}
      p
      \\
      1\end{array}\right)
  \|\mu\|_{4}\|\mu\|_{2(p-1)}+\fr{1}{\Gm(p)}\cdot\fr{12A^*_{p_1}}{p^{\eta}}
  \left(\begin{array}{ll} p
      \\
      1\end{array}\right)\|\mu\|_{2}\|\mu\|_{2p}
  \\
  &&+\fr{A_0}{4}\|\mu\|_2\fr{\|\mu\|_{2p}}{\Gm(p)}
  -\fr{A_0}{4}\fr{\|\mu\|_{2(p+1)}}{\Gm(p)}
  \\
  &&\le \wt{Z}_p^*\fr{12A^*_{p_1}}{p^{\eta}}\cdot
  p\sum_{k=2}^{k_p}\left(\begin{array}{ll} p
      \\
      k\end{array}\right){\rm B}(k,p-k)+\wt{Z}_p^*\cdot\fr{12A^*_{p_1}}
  {p^{\eta}}p\left(\begin{array}{ll} p
      \\
      1\end{array}\right){\rm B}(2,p-1)
  \\
  &&+\Big(12A^*_{p_1}p^{1-\eta}+\fr{A_0}{4}\Big)\|\mu\|_{2}Z_p
  -\fr{p}{16}A_0Z_{p}^{1+\fr{1}{p}}
  \\
  &&\le \wt{Z}_p^*\fr{12A^*_{p_1}}{p^{\eta}}\cdot
  p\sum_{k=1}^{k_p}\left(\begin{array}{ll} p
      \\
      k\end{array}\right){\rm
    B}(k,p-k)+\Big(12A^*_{p_1}p^{1-\eta}+\fr{A_0}{4}\Big)\|\mu\|_{2}Z_p
  -\fr{p}{16}A_0Z_p^{1+\fr{1}{p}}
  \\
  &&\le 48A^*_{p_1}p^{1-\eta}(\log p) \wt{Z}_p^*+
  \Big(12A^*_{p_1}p^{1-\eta}+\fr{A_0}{4}\Big)\|\mu\|_{2}Z_p
  -\fr{p}{16}A_0Z_{p}^{1+\fr{1}{p}}
\end{eqnarray*}
where in the last inequality we used Lemma \ref{lem3.3}. This proves
\eqref{(3.26)} for $\|\mu\|_0=1$.
\end{proof}

\subsection{An ODE comparison inequality}
\label{sec:an-ode-comparison-inequality}

Finally we shall conclude this section by proving an ODE comparison inequality which
will be useful for proving moment production estimates.

\begin{lemma} \label{lem3.7}
Given any $A>0, B>0, \vep>0$, we have:
\begin{itemize}
\item[(I)] The function
$$Y(t)=\left(\fr{A}{B(1-e^{-\vep A t})}\right)^{1/\vep}\,,\quad
t>0$$ is the unique positive $C^1$-solution of the equation
$$\fr{{\rm d}}{{\rm d}t}Y(t)=AY(t)-B Y(t) ^{1+\vep}\,,\quad t> 0\,;\quad Y(0+)=\infty\,. $$

\item[(II)]  Let $u(t)$ be a non-negative function in $(0,\infty)$ with the
    properties
that $u$ is absolutely continuous on every bounded closed subinterval of
$(0,\infty)$ and
$$ \left(\fr{{\rm d}
}{{\rm d}t}u(t)\right)1_{\{u(t)>Y(t)\}}\le \left(Au(t)-B u(t) ^{1+\vep}\right)1_{\{u(t)>Y(t)\}}\quad {\rm a.e.}\quad t\in(0,\infty)\,. $$
 Then $\,u(t)\le Y(t)$ for all $t\in(0,\infty)$.
\end{itemize}
\end{lemma}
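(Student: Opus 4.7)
For Part~(I), the plan is to use the Bernoulli-type substitution $W = Y^{-\vep}$, which converts the nonlinear equation $Y' = AY - BY^{1+\vep}$ into the linear ODE $W' = -\vep A W + \vep B$, with the singular boundary condition $Y(0^+) = +\infty$ translating into the tame condition $W(0^+) = 0$. Integrating this linear ODE explicitly yields $W(t) = (B/A)(1 - e^{-\vep A t})$ and hence the announced formula for $Y(t)$. Uniqueness of the positive $C^1$-solution reduces to the standard uniqueness for this linear equation on $(0, \infty)$ with the prescribed limit at $0$.

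For Part~(II), the plan is to argue by contradiction: assume $u(t^*) > Y(t^*)$ for some $t^* > 0$, and define
\[
t_0 := \inf\{t \in (0, t^*] : u(s) > Y(s) \text{ for all } s \in [t, t^*]\}.
\]
Continuity of $u - Y$ on $(0, \infty)$ implies $t_0 < t^*$, and if $t_0 > 0$ then (by the infimum property combined with continuity) $u(t_0) = Y(t_0)$ while $u > Y$ on $(t_0, t^*]$. The argument then splits according to whether $t_0 > 0$ or $t_0 = 0$.

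In the case $t_0 > 0$, on $[t_0, t^*]$ we have $u > Y > 0$, so the hypothesis combined with the equation for $Y$ gives
\[
(u - Y)'(s) \le A(u - Y)(s) - B\bigl(u(s)^{1+\vep} - Y(s)^{1+\vep}\bigr) \le A(u - Y)(s) \quad \text{a.e.}
\]
(using $u^{1+\vep} > Y^{1+\vep}$ to drop the negative term). Gronwall's inequality then yields $(u - Y)(t^*) \le (u - Y)(t_0) e^{A(t^* - t_0)} = 0$, contradicting $u(t^*) > Y(t^*)$.

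In the case $t_0 = 0$, the strict inequality $u > Y > 0$ holds throughout $(0, t^*]$. Here the plan is to introduce the reciprocals $W := u^{-\vep}$ and $Z := Y^{-\vep}$. On any compact subinterval $[\tau, t^*] \subset (0, t^*]$, $u$ is bounded below away from zero, hence $W$ is absolutely continuous, and the hypothesis translates into $W'(s) \ge -\vep A W(s) + \vep B$ a.e., while $Z$ satisfies the same equation with equality. Consequently $(e^{\vep A t}(Z - W))' \le 0$ a.e.\ on $[\tau, t^*]$, so $t \mapsto e^{\vep A t}(Z(t) - W(t))$ is non-increasing there. Since $u(\tau) > Y(\tau) \to +\infty$ as $\tau \to 0^+$, both $W(\tau) \to 0$ and $Z(\tau) \to 0$; letting $\tau \to 0^+$ yields $e^{\vep A t^*}(Z(t^*) - W(t^*)) \le 0$, i.e., $u(t^*) \le Y(t^*)$, again a contradiction. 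The delicate point is precisely this $t_0 = 0$ case: because $Y$ blows up at $t = 0$ one cannot directly invoke a Gronwall comparison with a finite initial datum, but the reciprocal substitution regularizes the singularity so that $Z(0^+) = 0$, and the assumption $u > Y$ automatically forces $W(0^+) = 0$ as well, which makes the limit argument close.
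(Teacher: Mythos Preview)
Your proof is correct and follows essentially the same strategy as the paper's. Both rely on the Bernoulli substitution $W=Y^{-\vep}$ for Part~(I) and, crucially, on the same substitution $u^{-\vep}$ to handle the singular behavior near $t=0$ in Part~(II). The only organizational difference is that the paper first shows $(u-Y)^+$ is globally non-increasing on $(0,\infty)$ (using that $x\mapsto Bx^{1+\vep}-Ax$ is increasing for $x>(A/B)^{1/\vep}$ and $Y(t)>(A/B)^{1/\vep}$, which gives $(u'-Y')1_{\{u>Y\}}\le 0$ directly rather than via Gronwall), and then reduces to finding some $t_*\in(0,t)$ with $u(t_*)\le Y(t_*)$; your case split on whether $t_0>0$ or $t_0=0$ achieves the same end.
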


\begin{proof}[Proof of  Lemma~\ref{lem3.7}] Part (I) is obvious. To prove part (II) we use the
  assumption on $u$ and notice that the function $x\mapsto
  Bx^{1+\vep}-A x$ is increasing in $((A/B)^{1/\vep},\infty)$ and
  $Y(t)>(A/B)^{1/\vep}$. Then it follows from the assumption of the lemma that \begin{eqnarray*}&&
  \Big(\fr{{\rm d}}{{\rm d}t}u (t)-\fr{{\rm d}}{{\rm d}t}Y(t)\Big)1_{\{u(t)>Y(t)\}}
\\
&&\le \Big(BY(t)^{1+\vep}-AY(t)-Bu(t)^{1+\vep}+Au(t)\Big)1_{\{u(t)>Y(t)\}}
  \le 0 \quad {\rm a.e.}\quad t\in(0,\infty).\end{eqnarray*}
Thus by the absolute
  continuity of $u$ we have for any $t>t_*>0$
\begin{eqnarray*}&&(u(t)-Y(t))^{+}\\
 && =(u(t_*)-Y(t_*))^{+}+\int_{t_*}^t \Big( \fr{{\rm d}}{{\rm
      d}\tau}u(\tau)-\fr{{\rm d}}{{\rm
      d}\tau}Y(\tau)\Big)1_{\{u(\tau)>Y(\tau)\}} {\rm d}\tau \le
  (u(t_*)-Y(t_*))^{+}\,.
  \end{eqnarray*}
 From this we see it is enough to prove that
for any $t>0$ there is $t_*\in (0,t)$ such that $u(t_*)\le
Y(t_*)$. Otherwise there were $t_0>0$ such that $u(t)>Y(t)$ for all
$t\in (0,t_0)$. By assumption on $u$, this implies
$$\fr{{\rm d}}{{\rm d}t}u (t)\le Au(t)-Bu(t)^{1+\vep} \quad {\rm a.e.}\quad t\in (0,t_0)\,.$$
On the other hand, from the lower bound $Y(t)>(A/B)^{1/\vep}$ we see that the
function $t\mapsto u^{-\vep}(t)$ is absolutely continuous on every
closed subinterval of $(0,t_0]$. We then compute for a.e. $t\in(0,t_0)$
\begin{eqnarray*}&& \fr{\rm d}{{\rm d}t}(u^{-\vep}(t)) \ge -\vep Au^{-\vep}(t)+ \vep
B\end{eqnarray*} and hence for any $0<\tau<t_0$ we have by the absolute continuity of
$t\mapsto u^{-\vep}(t)e^{\vep A t}$ on $[\tau, t_0]$ that
$$u^{-\vep}(t)e^{\vep A t}
\ge u^{-\vep}(\tau)e^{\vep A \tau }+\fr{B(e^{\vep A t}-e^{\vep A \tau} )}{A} \,,\quad
\forall\,t\in [\tau, t_0]\,.$$ Omitting the positive term $u^{-\vep}(\tau)e^{\vep A \tau
}$ and letting $\tau\to 0+$ leads to
$$u^{-\vep}(t)e^{\vep A t}
\ge \fr{B(e^{\vep A t}-1 )}{A}\,,\quad \forall\, t\in (0, t_0]$$ i.e.
$$u(t)\le\left(\fr{A}{B(1-e^{-\vep A t})}\right)^{1/\vep}=Y(t)\quad
\forall\, t\in (0, t_0]$$ which contradicts the assertion
``$u(t)>Y(t)$ for all $t\in (0,t_0)$''. This prove the existence of
$t_*\in(0,t)$ for all $t>0$ and therefore concludes the proof of the lemma.
\end{proof}

\section{Construction of weak measure solutions: Proof of Theorem
  \ref{theo1}}
\label{sec4}

For notation convenience we denote
$$\int_{\mathbb{R}^N}\vp {\rm d}F_t=
\int_{\mathbb{R}^N}\vp(v){\rm d}F_t(v),\quad {\rm etc.}$$
And note that if $F_t$ is a measure weak solution of Eq.~\eqref{(B)},
then for any $\vp\in C^2_b(\mathbb{R}^N)$ we have
\begin{equation}
\int_{\mathbb{R}^N}\vp{\rm d}F_{t}=\int_{\mathbb{R}^N}\vp{\rm
  d}F_{t_0} +\int_{t_0}^{t} \left\langle Q(F_\tau,F_{\tau}),\,\vp \right\rangle {\rm
  d}\tau \quad \forall\, t>t_0> 0\,.
\label{(4.1)}
\end{equation}

Our proofs of the parts (a)-(b)-(c)-(d) of Theorem \ref{theo1} are
contained in the following three steps.  \medskip

\subsection*{Step 1. A priori estimates for measure weak solutions}

We first prove part (b) and moreover we prove that the solution $F_t$
in part (b) satisfies that for any $s\ge 0$ and any $\vp\in
L^{\infty}_{-s}\cap C^2 (\mathbb{R}^N)$,
\begin{equation}
t\mapsto \langle Q(F_t,F_t),\,\vp\rangle
\quad {\rm is \,\,\,continuous\,\,\, in}\quad  (0,\infty)
\label{(4.4)}
\end{equation}
and
\begin{equation}
\fr{\rm d}{{\rm d}t}\int_{\mathbb{R}^N}\vp{\rm d}F_t=\langle
Q(F_t,F_t),\,\vp\rangle
\quad \forall\, t>0\,.
\label{(4.5)}
\end{equation}
And these integrals are absolutely convergent for any $t>0$. Then we prove
that  $F_t$ satisfies the moment production estimates in parts (c) and (d) of
Theorem \ref{theo1}.

Now let $F_t$ satisfy the assumptions in part (b). Recall that $F_t$ already
conserves the mass as mentioned in Definition~\ref{def:measure-weak}.
Therefore the assumption $\|F_t\|_2\le \|F_0\|_2\,(\forall\, t>0)$ is
equivalent to the energy inequality
\begin{equation}\int_{\mathbb{R}^N}|v|^2{\rm d}F_t(v)\le
\int_{\mathbb{R}^N}|v|^2{\rm d}F_0(v)\quad \forall\, t>0. \label{(4.E)}
\end{equation}

Since our test function space for defining measure weak solutions is only
$C^2_b(\mathbb{R}^N)$, we need a truncation-mollification approximation. Let
$\chi\in C^{\infty}_c(\mathbb{R}^N)$ satisfy $0\le \chi\le 1$ on
$\mathbb{R}^N$ and $\chi(v)=1$ for $|v|\le 1$,\, $\chi(v)=0$ for $|v|\ge 2$.
Given any $s\ge 0$ and any $\vp\in L^{\infty}_{-s}\cap C^2(\mathbb{R}^N)$, let
$\vp_n(v):=\vp(v)\chi(v/n)$. It is easily seen that $\vp_n\in
C^2_c(\mathbb{R}^N)\subset C^2_b(\mathbb{R}^N)$ and their Hessian matrices
satisfy
\[
\sup\limits_{n\ge 1}\left|H_{\vp_n}(v)\right|\le C_{\vp}\langle v\rangle^s\,.
\]
Thus by \eqref{(1.8)} we have for any $s_1>s+2+\gm$
\[
\sup_{n\ge 1}\fr{\left|L_B\left[\Dt\vp_n\right](v,v_*)\right|}{\langle
    v\rangle^{s_1}+\langle v_*\rangle^{s_1}}\le C_{\vp} A_{2}
    \fr{(\langle v\rangle^s+\langle
    v_*\rangle^{s})|v-v_*|^{2+\gm}}{\langle v\rangle^{s_1}+\langle v_*\rangle^{s_1}}\to
  0
\]
as $|v|^2+|v_*|^2\to\infty$, and by part (II) of Proposition
\ref{prop2.1}, we deduce
\[
\lim_{n\to\infty}L_{B}\left[\Dt
  \vp_n\right](v,v_*)=L_{B}\left[\Dt \vp\right](v,v_*)\qquad \forall\,
  (v,v_*)\in\mathbb{R}^N \times \mathbb{R}^N\,.
\]
Thus by \eqref{(4.1)}, the assumption \eqref{(1.E)} and the dominated
convergence theorem we obtain
$$\lim_{n\to\infty}\int_{t_0}^t \left\langle
  Q(F_{\tau},F_{\tau}),\vp_n\right\rangle {\rm d}\tau =\int_{t_0}^t
\left\langle Q(F_{\tau},F_{\tau}),\vp \right\rangle {\rm d}\tau\quad \forall\,
t>t_0>0$$ and thus \eqref{(4.1)} holds for all $\vp\in \bigcup_{s\ge
  0}L^{\infty}_{-s}\cap C^2(\mathbb{R}^N)$.

Since $\psi_j(v)=v_j$, $j=1, \dots, N$, and $\psi(v)=|v|^2$ belong to
$L^{\infty}_{-2}\cap C^2(\mathbb{R}^N)$ and $\Dt\psi_j=\Dt\psi=0$, it follows
from (\ref{(4.1)}) that $F_t$ conserves the momentum and energy in the
\emph{open} interval $(0,\infty)$. Therefore in order to prove the
conservation of momentum and energy in the \emph{closed} interval
$[0,\infty)$, we only have to prove that
\begin{equation}\label{(4.EE)}\lim_{t\to
    0^+}\int_{\mathbb{R}^N}v_j{\rm d}F_t(v)=
  \int_{\mathbb{R}^N}v_j{\rm d}F_0(v),\quad\lim_{t\to
    0^+}\int_{\mathbb{R}^N}|v|^2{\rm d}F_t(v)=
  \int_{\mathbb{R}^N}|v|^2{\rm d}F_0(v)\end{equation} for $j= 1,
2,\dots, N$.

Let $\chi(v)$ be given above and let $\vep>0$. Then $v\mapsto
v_j\chi(\vep v), v\mapsto |v|^2\chi(\vep v)$ belong to
$C^2_c(\mathbb{R}^N)\subset C^2_b(\mathbb{R}^N)$ so that, by
definition of measure weak solutions, the functions
\[
t\mapsto \int_{\mathbb{R}^N}v_j\chi(\vep v){\rm d}F_t(v) \quad \mbox{
  and } \quad t\mapsto \int_{\mathbb{R}^N}|v|^2\chi(\vep v){\rm
  d}F_t(v)
\]
are all continuous on $[0,\infty)$.  Since
\[
|v_j-v_j\chi(\vep v)|\le |v|1_{\{|v|\ge 1/\vep\}}\le \vep |v|^2
\] and
\[
C:=\sup_{t\ge
  0}\int_{\mathbb{R}^N}|v|^2{\rm d}F_t(v)\le
\int_{\mathbb{R}^N}|v|^2{\rm d}F_0(v)<\infty,
\]
it follows that
$$\int_{\mathbb{R}^N}v_j{\rm
  d}F_t(v)=\int_{\mathbb{R}^N}v_j\chi(\vep v){\rm
  d}F_t(v)+O(\vep)\qquad \forall\,t\ge 0$$ where $|O(\vep)|\le C\vep.$
Thus letting $t\to 0^+$ gives $$ \lim_{t\to
  0^+}\int_{\mathbb{R}^N}v_j{\rm d}F_t(v)
=\int_{\mathbb{R}^N}v_j\chi(\vep v){\rm d}F_0(v) +O(\vep).$$ Then
letting $\vep\to 0^+$ leads to the first equality in (\ref{(4.EE)})
for $j=1,2,\dots ,N$. Next using $|v|^2\ge |v|^2\chi(\vep v)$ and the
inequality ({\ref{(4.E)}) we have
$$\int_{\mathbb{R}^N}|v|^2{\rm
    d}F_0\ge\lim_{t\to 0^+}
\int_{\mathbb{R}^N}|v|^2{\rm
    d}F_t\ge \lim_{t\to 0^+}\int_{\mathbb{R}^N}|v|^2\chi(\vep v){\rm d}F_t(v)=
 \int_{\mathbb{R}^N}|v|^2\chi(\vep v){\rm d}F_0(v)$$
which leads to the second equality in (\ref{(4.EE)}) by letting $\vep \to
0^+$.

Next let's prove \eqref{(4.4)} and \eqref{(4.5)}. Given any $s\ge 0$ and
$\vp\in L^{\infty}_{-s}\cap C^2(\mathbb{R}^N)$.  For any $0<\dt<T<\infty$, by
denoting
\[
C_{\dt,T,s}=\sup_{\dt\le t\le T}\|F_t\|_{s}^2<\infty
\]
and using \eqref{(1.8)} we have
\begin{eqnarray*}
&&\left| \int_{\mathbb{R}^N}\vp{\rm
d}F_{t_1}-\int_{\mathbb{R}^N}\vp{\rm d}F_{t_2}\right| \le
C_{\vp}A_{2}C_{\dt,T,s}|t_1-t_2|\qquad \forall\, t_1, t_2\in [\dt, T]\,.\end{eqnarray*}
So
\begin{equation}
t\mapsto\int_{\mathbb{R}^N}\vp{\rm d}F_{t}
\quad {\rm is \,\,\,continuous\,\,\, in}\,\,\, t\in
(0,\infty)\,.
\label{(4.6)}
\end{equation}
In order to prove \eqref{(4.4)}, we need
only to show that for any fixed $t>0$ and any sequence $\{t_n\}\subset
[t/2, \,3t/2]$ satisfying $t_n\to t\ (n\to\infty)$ we have
\begin{equation}\label{(4.7)}
\lim_{n\to\infty} \left\langle Q(F_{t_n}, F_{t_n}),\vp \right\rangle=
\left\langle Q(F_{t},
F_{t}),\vp \right\rangle \,.
\end{equation}
This is an application of Proposition \ref{prop2.2}. In fact by
Proposition \ref{prop2.1} we know that $(v,v_*)\mapsto
L_B[\Dt\vp](v,v_*)$ is continuous on $\mathbb{R}^N \times
\mathbb{R}^N$, and as shown above
\begin{eqnarray*}
  &&  \fr{\left|L_B\left[\Dt\vp\right](v,v_*)\right|}{\langle v\rangle^{s_1}+\langle v_*\rangle^{s_1}}\le
  C_{\vp} A_{2} \fr{(\langle v\rangle^s+\langle v_*\rangle^{s})
  |v-v_*|^{2+\gm}}{\langle
    v\rangle^{s_1}+\langle v_*\rangle^{s_1}}\to 0
\end{eqnarray*}
for all $s_1>s+2+\gm$ as $|v|^2+|v_*|^2\to\infty$.  Since
\[
\sup\limits_{t/2\le \tau\le
  3t/2}\|F_{\tau}\|_{s_1}<\infty,
\]
it follows from Proposition \ref{prop2.2} and the weak-star convergence
$F_{t_n}\rightharpoonup F_{t}$ $(n\to\infty)$ (see \eqref{(4.6)}) that
\eqref{(4.7)} and therefore \eqref{(4.4)} hold true.

The differential equation \eqref{(4.5)} follows from the continuity property
\eqref{(4.4)} and from the equation \eqref{(4.1)} which has been proven to
hold for all $\vp\in L^{\infty}_{-s}\cap C^2(\mathbb{R}^N)$.

Now for any $s\ge 6$,  applying \eqref{(4.5)} to
 $\vp(v)=\langle v\rangle^{s}$, which belongs to $L^{\infty}_{-s}\cap
C^2(\mathbb{R}^N)$, and applying Lemma \ref{lem3.6} with $p=s/2$  we have for
any $t>0$
\begin{eqnarray*}&&\fr{\rm d}{{\rm d}t}\|F_t\|_{s}=\langle Q(F_t,F_t),\,
\langle\cdot\rangle^{s}\rangle\le 2^{s+1}
A_2\|F_0\|_2\|F_t\|_{s}-\fr{1}{4}A_2\|F_0\|_0\|F_t\|_{s+\gm}.
\end{eqnarray*}
Since, by using the inequality \eqref{(3.28)},
\begin{eqnarray*}&& \|F_t\|_{s+\gm}\ge (\|F_0\|_2)^{-\fr{\gm}{s-2}}
\left(\|F_t\|_{s}\right)^{1+\fr{ \gm}{s -2}}
\end{eqnarray*}
it follows that
\begin{eqnarray*}
&& \fr{\rm d}{{\rm d}t}\|F_t\|_{s}\le
  2^{s+1}A_2\|F_0\|_2\|F_t\|_{s}-\fr{1}{4}A_2\|F_0\|_0(\|F_0\|_2)^{-\fr{\gm}{s-2}}
  \left(\|F_t\|_{s}\right)^{1+\fr{ \gm}{s -2}}\quad
  \forall\,t>0\,.
\end{eqnarray*}
Thus using Lemma \ref{lem3.7} we obtain
\begin{eqnarray*}
&& \|F_t\|_{s}\le
  \left(\fr{2^{s+1}A_2\|F_0\|_2}{\fr{1}{4}A_2\|F_0\|_0(\|F_0\|_2)^{-\fr{\gm}{s-2}}
      \Big(1-\exp(-\fr{\gm}{s-2}2^{s+1}A_2\|F_0\|_2
      t)\Big)}\right)^{\fr{s-2}{\gm}}\quad \forall\,
  t>0\,.
\end{eqnarray*}
Since $s\ge 6$ implies $2^{s}\ge 8(s-2)$, this gives
\begin{eqnarray*}
&&\fr{\gm}{s-2}2^{s+1}A_2\|F_0\|_2 \ge
  16A_2\|F_0\|_2\gm =:\beta
\end{eqnarray*}
and hence
\begin{eqnarray*}
&&
  \|F_t\|_{s}\le \|F_0\|_2\left(\fr{\|F_0\|_2}{\|F_0\|_0}\cdot\fr{2^{s+3}}{ 1-e^{-\beta
        t}}\right)^{\fr{s-2}{\gm}}\,,\quad t>0\,,\quad s\ge
  6\,.
\end{eqnarray*}
Applying this estimate to $s=6$ we also obtain that for any $2\le s<6$
\begin{eqnarray*}
&& \|F_t\|_{s}\le
  (\|F_0\|_2)^{\fr{6-s}{4}}(\|F_t\|_{6})^{\fr{s-2}{4}}
  \le(\|F_0\|_2)^{\fr{6-s}{4}}(\|F_0\|_2)^{\fr{s-2}{4}}
  \left(\fr{\|F_0\|_2}{\|F_0\|_0}\cdot\fr{2^{9}}{
      1-e^{-\beta t}}\right)^{\fr{4}{\gm}\times\fr{s-2}{4}}\\
  && =\|F_0\|_2 \left(\fr{\|F_0\|_2}{\|F_0\|_0}\cdot\fr{2^{9}}{ 1-e^{-\beta
        t}}\right)^{\fr{s-2}{\gm}} \,.
\end{eqnarray*}
Maximizing the two cases gives $\max\{2^{s+3}\,,\, 2^9 \}\le 2^{s+7}$ for all
$s\ge 2$  and thus
\begin{equation}
\|F_t\|_{s}\le \|F_0\|_2 \left(\fr{\|F_0\|_2}{\|F_0\|_0}\cdot\fr{2^{s+7}}{
1-e^{-\beta t}}\right)^{\fr{s-2}{\gm}} \qquad
\forall\, t>0\,,\quad \forall\,s\ge 2 .
\label{(4.8)}
\end{equation}

The estimate \eqref{(1.12)} now follows from \eqref{(4.8)} since by using the
inequality
\[
\fr{1}{1-e^{-\beta
    t}}\le \left(1+\frac{1}\beta\right)\left(1+\frac1t\right)
\]
we have
\begin{eqnarray*}
&& \|F_t\|_{s}\le
  \|F_0\|_2\left\{2^{s+7}\fr{\|F_0\|_2}{\|F_0\|_0}
    \left(1+\fr{1}{\beta}\right)\right\}^{\fr{s-2}{\gm}}
  \left(1+\frac1t\right)^{\fr{s-2}{\gm}} ={\mathcal K}_s(F_0)
  \left(1+\frac1t\right)^{\fr{s-2}{\gm}}.
\end{eqnarray*}
Note that from \eqref{(4.8)} and $0<\gm\le 2$ we also have
\begin{equation}
\|F_t\|_{s}\le \fr{\|F_0\|_0}{2^{s+7}}
\left(\fr{\|F_0\|_2}{\|F_0\|_0}\cdot\fr{2^{s+7}}{\left(1-e^{-\beta
t}\right)}\right)^{\fr{s}{\gm}} \qquad \forall\, t>0\,,\quad \forall\,s\ge 2
\label{(4.10)}
\end{equation}
which will be used below.

Now we are going to prove the exponential moment production estimate
\eqref{(1.13)}.
Let $p,q$ be
defined through the following relation (as used in Lemma \ref{lem3.6})
$$ q=ap\quad{\rm with}\quad  a=\fr{2}{\gm}\,.$$
Also recall that $F_t$ conserves the mass and energy, i.e.
$\|F_t\|_0=\|F_0\|_0,\,\|F_t\|_2=\|F_0\|_2$.  We consider two cases:
\smallskip

\noindent {\bf Case 1.} $0<\gm<2$. In this case we have $a>1$. By
Lemma \ref{lem3.6} we have for all $t >0$ and $q\ge 3 a$ (i.e. for all $p\ge 3$)
\begin{eqnarray*}
  &&\fr{{\rm d}}{{\rm d} t}Z_q(t)=\fr{\left\langle
      Q(F_t,F_t),\,\left\langle\cdot\right\rangle^{2p}\right\rangle}{\Gm(q)\|F_0\|_0}
  \\
  &&\le \left(C_a q^{2-a} + C_a q^{3-a}\vep_p \right)A_2\|F_0\|_0 Z_p^*(t)
 +\fr{1}{2}A_2\|F_0\|_2Z_{q}(t)-\fr{q}{16}A_2\|F_0\|_0Z_q(t)^{1+\fr{1}{q}}\,,
\end{eqnarray*}
where
$$
Z_q(t)=\fr{\|F_t\|_{\gm q}}{\Gm(q)\|F_0\|_0}\,,\quad Z_p^*(t)=\max_{k\in\{1,2,
\dots, k_p\}} \{Z_{ak+1}(t)Z_{a(p-k)}(t)\,,\,Z_{ak}(t)Z_{a(p-k)+1}(t)\}\,.
$$
Using $a=2/\gm>1$ and Lemma \ref{lem3.4} we have
\[
C_a q^{2-a} + C_a q^{3-a}\vep_p=o(1)q\quad (q\to\infty)
\]
so that there is a positive
integer $n_0$, depending only on $b(\cdot)$ and $\gm$, such that
$$
n_0\dt\ge 3a\quad {\rm and}\quad C_a q^{2-a} + C_a
q^{3-a}\vep_p\le \fr{q}{32}\qquad \forall\, q\ge n_0\dt,\quad {\rm where}\quad \dt=a-1\,.
$$
Since
\[
q\ge n_0\dt \,\,\Longrightarrow\,\, \fr{1}{2}A_2\|F_0\|_2
<16A_2\|F_0\|_2\gm q=\beta q,
\]
it follows that
\begin{equation}\label{(4.11)}
\fr{{\rm d}}{{\rm d}t}Z_q(t) \le \fr{A_2\|F_0\|_0q}{32}Z_p^*(t)+\beta q
Z_{q}(t)- \fr{q}{16}A_2\|F_0\|_0 Z_q(t)^{1+\fr{1}{q}} \qquad \forall\, q\ge
n_0\dt\,.
\end{equation}
Let
$$\Theta=2^{\gm n_0\dt+7}\fr{\|F_0\|_2}{\|F_0\|_0}\,,\quad
Y_q(t)=\left(\fr{\Theta}{1-e^{-\beta t}}\right)^{q} \,, \quad\, t>0\,.$$ Then
$Y_q$ satisfies the equation
$$\fr{\rm d}{{\rm d}t}Y_q(t)=\beta qY_q(t)-\fr{\beta
q}{\Theta}(Y_q(t))^{1+\fr{1}{q}}\,,\quad t>0\,;\quad Y_q(0+)=\infty\,.$$
We now prove that
\begin{equation}
Z_q(t)\le Y_q(t)\qquad \forall\, t>0\,,\quad \forall\, q\ge
1\,.
\label{(4.12)}
\end{equation}
To do this, it suffices to show that
\begin{equation}\label{(4.13)}
Z_q(t)\le Y_q(t)\qquad \forall\, t>0\,,\quad \forall\, q\in[1,\, n\dt]\,,\quad
n=n_0, n_0+1, n_0+2, \dots\,.
\end{equation}
First of all it is easily seen that  \eqref{(4.13)} holds
for $n=n_0$.  In fact by definitions of $Z_q(t)$ and $Y_q(t)$ and using the
inequality $\Gm(q)>1/2$ ($\forall\,q\ge 1$) and \eqref{(4.10)} we have  for all $1\le q\le
n_0\dt$
\begin{eqnarray*}&&
Z_q(t)\le 2\fr{\|F_t\|_{\gm q}}{\|F_0\|_0} \le \left(\fr{\|F_0\|_2}{\|F_0\|_0}
\cdot \fr{2^{\gm q+7}}{1-e^{-\beta t}}\right)^{q}\le Y_q(t)\quad
\forall\,t>0.\end{eqnarray*} Suppose that \eqref{(4.13)} holds for an integer
$n\ge n_0$.  Take any $q\in [n\dt\,,\, (n+1)\dt]\,.$ Then $q\ge n\dt\ge
n_0\dt$ and so \eqref{(4.11)} holds for such $q$. Recall that $ap=q$.
Since for all integer $1\le k\le k_p=[(p+1)/2]$ there hold
\[
\left\{
\begin{array}{l}
 1<ak< ak+1
\le \fr{(n+1)\dt +a}{2}+1<n\dt,\vspace{0.2cm} \\
 1<a(p-k)< a(p-k) +1\le q-\dt\le n\dt
\end{array}
\right.
\]
it follows from the inductive hypothesis that
\[
\left\{
\begin{array}{l}
Z_{ak+1}(t)Z_{a(p-k)}(t)\le
  Y_{ak+1}(t)Y_{a(p-k)}(t)=Y_{q+1}(t) \,,\vspace{0.2cm} \\
  Z_{ak}(t)Z_{a(p-k)+1}(t)\le
  Y_{ak}(t)Y_{a(p-k)+1}(t)=Y_{q+1}(t) \,.
\end{array}
\right.
\]
Therefore by definitions of $Z_p^*(t), Y_q(t)$ we obtain
$$Z_p^*(t)\le Y_{q+1}(t)=Y_q(t)^{1+\fr{1}{q}}\,,\quad \forall\, t>0\,,\quad
\forall\, q\in[n\dt,\, (n+1)\dt]$$
and hence by \eqref{(4.11)}
 $$\fr{{\rm d}}{{\rm d} t}Z_q(t)\le \beta q Z_{q}(t)+\fr{A_2\|F_0\|_0}{32}q Y_q(t)^{1+\fr{1}{q}}
- \fr{A_2\|F_0\|_0}{16} qZ_q(t)^{1+\fr{1}{q}} \qquad
\forall\,t>0$$ for all $q\in[n\dt,\, (n+1)\dt]$. From this we obtain the
following inequality:
\begin{eqnarray*}
&&\left(\fr{{\rm d}
}{{\rm d} t}Z_q(t)\right)1_{\{Z_q(t)>Y_q(t)\}}\le\left(\beta q Z_{q}(t)- \fr{\beta q}{\Theta}
Z_q(t)^{1+\fr{1}{q}}\right)1_{\{Z_q(t)>Y_q(t)\}}\quad \forall\, t>0
\end{eqnarray*}
where we used the obvious fact that $$\fr{A_2\|F_0\|_0}{32}>
\fr{\beta}{\Theta}.$$ Thus applying Lemma \ref{lem3.7} we conclude $Z_q(t)\le Y_q(t)$ for all $t>0$. This together with the inductive hypotheses implies that $Z_q(t)\le
Y_q(t)$ for all $t>0$ and all $q\in [1\,,\, (n+1)\dt]\,.$ This proves
\eqref{(4.13)} and thus  \eqref{(4.12)} holds true.

Now let
\begin{equation}\label{(4.C)}
\alpha(t)=\fr{1-e^{-\beta t}}{2\Theta}\,,\quad t>0\,.
\end{equation}
Then by definitions of $Z_q(t), Y_q(t)$ and $Z_q(t)\le Y_q(t)$ we have for all
$t>0$
\begin{eqnarray*}
\fr{(\alpha(t))^q\|F_t\|_{\gm q}}{q!\|F_0\|_0}\le (\alpha(t))^qZ_q(t)
\le(\alpha(t))^qY_q(t)=\fr{1}{2^q},\quad q=1,2,\dots\end{eqnarray*} and thus
\begin{eqnarray*}
  \int_{\mathbb{R}^N} e^{\alpha(t)\langle v\rangle^{\gm}}
  {\rm d}F_t(v)=\|F_0\|_0+\sum_{q=1}^{\infty}\fr{(\alpha(t))^q}{q!}\|F_t\|_{\gm
    q}\le 2\|F_0\|_0\,.
\end{eqnarray*}
\smallskip

\noindent {\bf Case 2.} $\gm =2$. In this case we have $a=1$ hence $q=p$. From
part (II) of Lemma \ref{lem3.6} with  $p_1,q_1$ and $\eta$ given in \eqref{H3bis}-\eqref{H3bis-1}, we
have for all $p\ge (12 A^*_{p_1}/{A_0})^{2q_1}$ (which is larger
than $5$)
\begin{eqnarray*}&& \fr{{\rm d}}{{\rm d}t}Z_p(t)\le
 48A^*_{p_1}p^{1-\eta}(\log p)\|F_0\|_0 \wt{Z}_p^*(t) \\
 && \qquad \qquad +
\left(12A^*_{p_1}p^{1-\eta}  +\fr{A_0}{4}\right)\|F_0\|_2Z_p(t)
-\fr{A_0\|F_0\|_0}{16}pZ_p(t)^{1+\fr{1}{p}}
\end{eqnarray*}
where
$$Z_p(t)=\fr{\|F_t\|_{2p}}{\Gm(p)\|F_0\|_0}\,,\quad\wt{Z}_p^*(t)
=\max_{k\in\{1,2, \dots,\, k_p\}}Z_{k+1}(t) Z_{p-k}(t)\,,\quad t>0\,.$$ Let us
fix an integer $n_0\ge (12 A^*_{p_1}/{A_0})^{2q_1}$ such that
$$
48A^*_{p_1}p^{1-\eta}\log p\le \fr{A_2}{32}p\,,\quad
12A^*_{p_1}p^{1-\eta}+\frac{A_0}{4}\le 32A_2 p\qquad \forall\,
p\ge n_0\,.$$ Recalling $\beta=32 A_2\|F_0\|_2$ for $\gm=2$, this gives
\begin{equation}
\fr{{\rm d}}{{\rm d}t}Z_p(t)\le \fr{A_2\|F_0\|_0}{32}p \wt{Z}_p^*(t) +\beta p
Z_p(t)- \fr{A_2\|F_0\|_0}{16}pZ_p(t)^{1+\fr{1}{p}} \qquad \forall\,
p\ge n_0\,.
\label{(4.15)}
\end{equation} It will be clear that in the present case all $p$
can be chosen integers. Let
$$\Theta=2^{2n_0+7}\fr{\|F_0\|_2}{\|F_0\|_0}\,,\quad
Y_p(t)=\left(\fr{\Theta}{1-e^{-\beta t}}\right)^{p} \,, \quad\, t>0\,;\  p\ge
1\,.$$ Then $Y_p$ satisfies the equation
$$\fr{\rm d}{{\rm d}t}Y_p(t)=\beta pY_p(t)-\fr{\beta
p}{\Theta}Y_p(t)^{1+\fr{1}{p}}\,,\quad t>0\,;\quad Y_p(0+)=\infty\,.$$ We now
prove that
\begin{equation}
Z_p(t)\le Y_p(t)\qquad \forall\, t>0\,,\quad p=1,2,3, \dots
\label{(4.16)}
\end{equation}
As shown in the Case 1 one sees that \eqref{(4.16)} holds for all integer
$1\le p\le n_0$.  Suppose that \eqref{(4.16)} holds true for some integer $p-1
\ge n_0$. Let us check the case $p$. By $p-1\ge n_0>5$ we have $k_p+1\le
(p+1)/2+1\le p-1$ and so $ Z_{k+1}(t)Z_{p-k}(t) \le
Y_{k+1}(t)Y_{p-k}(t)=(Y_{p}(t))^{1+\fr{1}{p}}$ hold for all $k\in\{1, 2,..., k_p\}$.
So
\begin{eqnarray*}
&& \wt{Z}_{p}^*(t)=\max_{k\in\{1, \, 2,\dots,\,
    k_{p}\}}Z_{k+1}(t)Z_{p-k}(t)\le Y_{p}(t)^{1+\fr{1}{p}}\end{eqnarray*}
hence from \eqref{(4.15)} we obtain
$$\fr{{\rm d}}{{\rm d}t}Z_p(t) \le \beta p
Z_{p}(t)+\fr{A_2\|F_0\|_0}{32}pY_p(t)^{1+\fr{1}{p}} - \fr{A_2\|F_0\|_0}{16}pZ_p(t)^{1+\fr{1}{p}}
 \qquad \forall\, t>0$$
which together with
 $\fr{A_2\|F_0\|_0}{32}> \fr{\beta}{\Theta}$ implies the inequality
\begin{eqnarray*}
  &&\left(\fr{{\rm d}}{{\rm d}t} Z_{p}(t)\right)1_{\{Z_p(t)>Y_p(t)\}}\le
  \left( \beta p Z_{p}(t)- \fr{\beta
    p}{\Theta}Z_p(t)^{1+\fr{1}{p}}\right)1_{\{Z_p(t)>Y_p(t)\}}\quad \forall\, t>0.
\end{eqnarray*}
Applying Lemma \ref{lem3.7} we then conclude that $Z_{p}(t)\le
Y_p(t)\,\,\forall\, t>0\,.$ This proves \eqref{(4.16)}.

As shown above we obtain with the function $\alpha(t)$ defined in
\eqref{(4.C)} that
\begin{equation*}
\int_{\mathbb{R}^N} e^{\alpha(t)\langle v\rangle^{2}} {\rm d}F_t(v)\le
2\|F_0\|_0\qquad \forall\, t>0\,.
\end{equation*}
This completes Step 1.
\medskip

\noindent
\subsection*{Step 2. Construction of solutions for absolutely continuous measures} Suppose that $F_0$ is absolutely continuous with respect to the
Lebesgue measure, i.e. ${\rm d}F_0(v)=f_0(v) {\rm d}v$, and suppose
that (moment bounds and finite entropy)
\[
0\le
f_0\in \bigcap_{s\ge 0}L^1_s(\mathbb{R}^N) \quad \mbox{ and } \quad
0<\int_{\mathbb{R}^N}f_0(v)|\log f_0(v)|{\rm d}v<\infty\,.
\]
In this case we prove that there exists $\{f_t\}_{t\ge 0}\subset \bigcap_{s\ge
0}L^1_s(\mathbb{R}^N)$ such that the measure $F_t$ defined by ${\rm
d}F_t(v)=f_t(v){\rm d}v$ is a conservative measure weak solution of
Eq.~\eqref{(B)} associated with the initial datum $F_0$ and $F_t$ satisfies
the moment production estimates \eqref{(1.12)} and \eqref{(1.13)}.

To do this we consider some bounded truncations $B_n$ of the kernel $B$:
$$
B_n(z,\sg)=\min \{|z|^{\gm},\,n\}\min\{b(\cos\theta),\,n\}\,, \quad
n=1,2,\dots
$$
It is well known that for every $n\ge 1$ the Eq.~\eqref{(B)} with the bounded
kernel $B_n$ has a unique conservative solution $f^n_t(v)$ satisfying
$f^n_0(v)=f_0(v)$ and $f^n\in C^1([0,\infty); L^1_s(\mathbb{R}^N))\cap
L^{\infty}_{\rm loc} ([0,\infty); L^1_s(\mathbb{R}^N))$ for all $s\ge 0$, and
\begin{equation}\label{borne-unif}
\sup_{n\ge 1,\, t\ge 0}\int_{\mathbb{R}^N}f^n_t(v)\left(1+|v|^2+|\log f^n_t(v)|\right)
{\rm d}v<\infty\,.
\end{equation}

Let $Q_{B_n}(\cdot,\cdot)$ (collision operator) and $A_{n,2}$ (angular
momentum defined in {\bf (H0)}) correspond to the kernel $B_n$, and define
${\rm d}F^n_t(v)= f^n_t(v){\rm d}v$. Then $\|F^n_t\|_2=\|F_0^n\|_2=\|F_0\|_2$
and from the proof of Lemmas \ref{lem3.5}-\ref{lem3.6} we see that by omitting
the negative term in the proofs of the two lemmas and noting that $A_{n,2}\le
A_2$ we have for all $p\ge 3$
\begin{eqnarray*}
&&\fr{\rm d}{{\rm d}t}\|F^n_t\|_{2p} =\left\langle
  Q_{B_n}(F^n_{t},F^n_t),\left\langle\cdot\right\rangle^{2p}\right\rangle\le
  2^{2p+1}A_2\|F_0\|_2\|F^n_t\|_{2p} \,.
\end{eqnarray*}
Thus for all $s\ge 6$, letting $p=s/2$ and recalling
$\|f^n_t\|_{L^1_s}=\|F^n_t\|_s$ we obtain
\begin{eqnarray*}
\sup_{n\ge 1}\|f^n_t\|_{L^1_s}\le \|f_0\|_{L^1_s} \exp\left(2^{s+1} A_2\|F_0\|_2
t\right)\qquad \forall\,
t\ge 0\,.
\end{eqnarray*}

From this and the basic estimate \eqref{(1.8)} we get for any $\vp\in
C^2_b(\mathbb{R}^N)$ and any $T\in (0,\infty)$
\begin{eqnarray*}
&& \Big|\int_{\mathbb{R}^N}\vp(v) f^n_{t_1}(v){\rm d}v-\int_{\mathbb{R}^N}\vp
(v)f^n_{t_2}(v){\rm d}v\Big| \le C_{\vp, T} |t_1-t_2|\qquad \forall\, t_1,
t_2\in[0,T]\,.
\end{eqnarray*}
This together with \eqref{borne-unif} implies for any $\psi\in
L^{\infty}(\mathbb{R}^N)$ and any $T\in(0,\infty)$
\begin{equation}
\sup_{t_1,t_2\in[0,T],\,|t_1-t_2|\le \dt;\, n\ge 1}
\left|\int_{\mathbb{R}^N}\psi f^n_{t_1}{\rm d}v -\int_{\mathbb{R}^N}\psi f^n_{t_2}{\rm
d}v\right| \to 0\quad {\rm as}\quad \dt\to 0^+\,.
\label{cvg-Linfty}
\end{equation}

Since \eqref{borne-unif} implies that for every $t\ge 0$,
$\{f^n_t\}_{n=1}^{\infty}$ is $L^1$-weakly relatively compact, it follows from
diagonal argument and \eqref{cvg-Linfty} that there is a subsequence of
$\{n\}$ (independent of $t$), still denoted as $\{n\}$, and a nonnegative
measurable function $(t,v)\mapsto f_t(v)$ on $[0,\infty)\times\mathbb{R}^N$
satisfying $f_t\in L^1(\mathbb{R}^N)$ ($\forall\, t\ge 0$) such that for all
$\psi\in L^{\infty}(\mathbb{R}^N)$
\begin{equation}
\lim_{n\to\infty}\int_{\mathbb{R}^N}\psi f^n_t {\rm d}v
=\int_{\mathbb{R}^N}\psi f_t{\rm d}v\quad \forall\, t\ge
0\,.
\label{(4.19)}
\end{equation}
And consequently
\[
f_t\in \bigcap_{s\ge 0} L^1_s(\mathbb{R}^N) \quad \forall\,t\ge 0\,,
\]
and
\begin{equation}
\sup_{t\ge 0}\|f_t\|_{L^1_2}\le\|f_0\|_{L^1_2}\,,\quad \sup_{0\le t\le
T}\|f_t\|_{L^1_s}<\infty\quad \forall\, 0<T<\infty\,,\quad
\forall\, s\ge 0\,,
\label{(4.20)}
\end{equation}
and for any $s>0$ and any $\psi\in L^{\infty}(\mathbb{R}^N)$
\begin{equation}
t\mapsto \int_{\mathbb{R}^N}\psi f_t{\rm d}v\quad {\rm is \,\,\,continuous
\,\,\,on}\quad [0,\infty)
\,.
\label{(4.21)}
\end{equation}

Now we are going to show that $f_t$ (or equivalently the measure $F_t$ defined
by ${\rm d}F_t(v)=f_t(v){\rm d}v$) is a conservative weak solution of
Eq.~\eqref{(B)} with the kernel $B$. Given any $\vp\in C^2_b(\mathbb{R}^N)$,
we have by \eqref{(1.8)} and $B_n\le B$
\begin{eqnarray*}
  && \sup_{n\ge 1}
  \fr{\left|L_{B_n}\left[\Dt\vp\right](v,v_*)\right|}{\langle v\rangle^{s}
  +\langle
    v_*\rangle^{s}}\le A_2C_{\vp}\fr{ |v-v_*|^{2+\gm}}{\langle v\rangle^{s}
    +\langle v_*
    \rangle^{s}}\to 0\,\quad (|v|^2+|v_*|^2\to\infty)\end{eqnarray*} for
$s>2+\gm.$
Moreover by Proposition \ref{prop2.1}, $L_{B_n}[\Dt\vp](v,v_*),\,
L_B[\Dt\vp](v,v_*)$ are all
continuous on $(v,v_*)\in\mathbb{R}^N \times \mathbb{R}^N$, and
$$
\lim_{n\to\infty}\sup_{|v|+|v_*|\le R}
\left|L_{B_n}\left[\Dt\vp\right](v,v_*)-L_B\left[\Dt\vp\right](v,v_*)
\right|=0
\qquad \forall\,0<R<\infty\,.
$$
It follows from \eqref{(4.19)} and Proposition \ref{prop2.2} that
$$\sup_{0\le t\le T}\intt_{\mathbb{R}^N \times \mathbb{R}^N}
\left|L_{B}\left[\Dt\vp\right](v,v_*)\right| f_t(v) f_t(v_*)
{\rm d}v{\rm d}v_*<\infty\qquad \forall\, 0<T<\infty\,,
$$
$$
\left\langle Q_{B_n}(f^n_{t}, f^n_{t}), \vp\right\rangle \to \left\langle
Q_{B}(f_{t}, f_{t}), \vp \right\rangle \quad (n\to\infty)\quad \forall\, t\ge
0\,.
$$
Again using Proposition \ref{prop2.2} and \eqref{(4.21)} we conclude that
$$
t\mapsto \langle Q_{B}(f_{t}, f_{t}), \vp\rangle \quad {\rm is
  \,\,\,continuous \,\,\,on}\quad [0,\infty)\,.
$$
Finally using the dominated convergence theorem (in the $t$ variable) we
conclude that
\begin{eqnarray*}
&& \int_{\mathbb{R}^N}\vp f_t{\rm d}v
  =\int_{\mathbb{R}^N}\vp f_0{\rm d}v+\int_{0}^t \left\langle Q_{B}(f_{\tau},
  f_{\tau}), \vp \right\rangle {\rm d}\tau\qquad \forall\, t\ge
  0\,.
\end{eqnarray*}
Thus $f_t$ is a weak solution of Eq.~\eqref{(B)}. Let $F_t$ be defined by
${\rm d}F_t(v)=f_t(v){\rm d}v$. Then from $\|F_t\|_s=\|f_t\|_{L^1_s}$,
\eqref{(4.20)}, and Step 1 we conclude that $F_t$ is a conservative measure
weak solution of Eq.~\eqref{(B)} associated with the initial datum $F_0$ and
satisfies the moment production estimates \eqref{(1.12)} and \eqref{(1.13)}.
\medskip

\subsection*{Step 3. The approximation argument and conclusion}
Let $F_0$ be the given measure in ${\mathcal
  B}_2^{+}(\mathbb{R}^N)$ with $\|F_0\|_0\neq 0$. We shall prove the
existence of a measure weak solution $F_t$ that has all properties listed in
the theorem.

First if $F_0=c\dt_{v=v_0}$ ($c>0$) is a Dirac mass, then it is easily checked
that the measure $F_t\equiv c\dt_{v=v_0}$ is a measure weak solution of
Eq.\eqref{(B)} and apparently it conserves the mass, momentum and energy and
has finite moments of all orders. By Step 1 we conclude that $F_t$ satisfies
the moment production estimates \eqref{(1.12)}-\eqref{(1.13)}.

Suppose $F_0$ is not a Dirac mass. We shall use \textbf{Mehler
  transform}: Let
\begin{equation}
\rho=\|F_0\|_0\,,\quad v_0=\fr{1}{\rho}\int_{\mathbb{R}^N}v {\rm
d}F_0(v)\,,\quad T=\fr{1}{
N\rho}\int_{\mathbb{R}^N}|v-v_0|^2{\rm d}F_0(v)\,.
\label{(4.22)}
\end{equation}
Then $T>0$ so that the Maxwellian used in the Mehler transform can be
defined:
\begin{equation}
M(v)=\fr{e^{-|v|^2/2T}}{(2\pi T)^{N/2}}\,,\qquad v\in \mathbb{R}^N\,.
\label{(4.23)}
\end{equation}
The Mehler transform of $F_0$ is defined by
\begin{equation}
f^n_0(v)=e^{Nn}\int_{\mathbb{R}^N}M\left(e^{n}\Big(v-v_0-\sqrt{1-e^{-2n}}\,\,
(v_*-v_0)\Big)\right){\rm d}F_0(v_*)\,,\quad n\ge 1\,.
\label{(4.24)}
\end{equation}
It is well known that
\begin{eqnarray*}
\int_{\mathbb{R}^N} \left( \begin{array}{c} 1 \\ v \\
    |v|^2 \end{array} \right)  f^n_0(v){\rm d}v=\int_{\mathbb{R}^N} \left( \begin{array}{c} 1 \\ v \\
    |v|^2 \end{array} \right) {\rm d}F_0(v)
\end{eqnarray*}
and for all $\psi\in L^{\infty}_{-2}\cap C(\mathbb{R}^N)$
\begin{eqnarray*}
\lim_{n\to\infty} \int_{\mathbb{R}^N}\psi(v)f^n_0(v){\rm
d}v=\int_{\mathbb{R}^N}\psi(v){\rm
d}F_0(v)\,.
\end{eqnarray*}

For every $n$, choose $K_n>n $ such that
\begin{equation}
\int_{\mathbb{R}^N}\Big(f^n_0(v)-\min\{f^n_0(v),\, K_n\}e^{-\fr{|v|^2}{K_n}}
\Big)\langle v\rangle^2 {\rm d}v\le \fr{\|F_0\|_0}{2n}\,.
\label{(4.27)}
\end{equation}
Then let
$$
\wt{f}^n_0(v)=\min\{f^n_0(v),\, K_n\} e^{-|v|^2/n},\qquad
{\rm d}F_0^n(v)=\wt{f}^n_0(v){\rm d}v\,.
$$
We need to prove that
\begin{equation}
\lim_{n\to\infty} \int_{\mathbb{R}^N}\psi{\rm
d}F^n_0=\int_{\mathbb{R}^N}\psi{\rm d}F_0\qquad \forall\,
\psi\in L^{\infty}_{-2}C(\mathbb{R}^N)\,.
\label{(4.28)}
\end{equation}
Indeed we have \begin{eqnarray*}&& \Big| \int_{\mathbb{R}^N}\psi{\rm
d}F^n_0-\int_{\mathbb{R}^N}\psi{\rm
  d}F_0\Big|
\le\Big| \int_{\mathbb{R}^N}\psi(\wt{f}^n_0-f^n_0){\rm
d}v\Big|+\Big|\int_{\mathbb{R}^N}\psi f^n_0{\rm d}v
-\int_{\mathbb{R}^N}\psi{\rm d}F_0\Big|\,. \end{eqnarray*}
 The second term converges to
zero ($n\to\infty$). The first term also goes to zero: By \eqref{(4.27)} we
have
\begin{eqnarray*}
&&\Big| \int_{\mathbb{R}^N}\psi(\wt{f}^n_0-f^n_0){\rm d}v\Big| \le
C\int_{\mathbb{R}^N}\langle v\rangle^2|\wt{f}^n_0-f^n_0|{\rm d}v\le
\fr{C}{2n}\,.
\end{eqnarray*}

Since for every $n$, $\wt{f}^n_0$ satisfies the condition in the Step 2, there
is a conservative measure weak solution $F^n_t$ of Eq.~\eqref{(B)} with the
kernel $B$ and the initial data $F^n_0$, such that $F^n_t$ satisfies the
moment estimates
$$\|F^n_t\|_{s}\le {\mathcal K}_s(F_0^n)(1+1/t)^{\fr{s-2}{\gm}}\qquad \forall\,t>0
\,,\quad \forall\, s\ge 2. $$ Here recall that ${\mathcal K}_s(\cdot)$ is
defined in \eqref{(1.12*)}. By the convergence \eqref{(4.28)} we have
$$\lim_{n\to\infty}{\mathcal K}_s(F_0^n)={\mathcal K}_s(F_0)\qquad \forall\,
s\ge 2 \,.$$ Thus for any $s\ge 2$, $C_s^*:=\sup\limits_{n\ge 1}{\mathcal
K}_s(F_0^n) <\infty$ and hence
\begin{equation} \label{(4.29)}
\sup_{n\ge 1}\|F^n_t\|_{s}\le C_{s}^*\left(1+1/t\right)^{\fr{s-2}{\gm}} \qquad
\forall\, t>0\,,\quad \forall\,s\ge 2\,.
\end{equation}

Next we prove the equi-continuity of $\{F^n_t\}_{n=1}^{\infty}$ in
$t\in[0,\infty)$ (in particular in the neighborhood of $t=0$).  It is only in
this part that the logarithm $|\log(\sin\theta)|$ comes into play. Let
$$
\ld(\theta):= \fr{1}{1+|\log(\sin\theta)|}\,,\quad 0<\theta<\pi\,.
$$
By \eqref{(1.8)} and $0<\gm\ld(\theta)\le \gm\le 2$ we have for any $\vp\in
C_b^2(\mathbb{R}^N)$ \begin{eqnarray*}&&
  \left|\int_{\mathbb{S}^{N-2} ({\bf n})}\Dt\vp \,{\rm d}\og\right|\le
  C_{\vp} \left|\int_{\mathbb{S}^{N-2} ({\bf
        n})}\Dt\vp \,{\rm d}\og\right|^{\fr{2-\gm\ld(\theta)}{2}}
  \le
  C_{\vp}|v-v_*|^{2-\gm\ld(\theta)}(\sin\theta)^{2-\gm\ld(\theta)}
\end{eqnarray*} where here and below $C_{\vp}$ only depends on $\vp$ and $N$.  Then by
using
\[
|v-v_*|^{\gm+2-\gm\ld(\theta)}\le 8 \left(\langle
v\rangle^{\gm+2-\gm\ld(\theta)}+\langle
v_*\rangle^{\gm+2-\gm\ld(\theta)}\right)
\]
and $ (\sin\theta)^{-\gm\ld(\theta)}=e^{\gm(1-\ld(\theta))}\le e^2
$
and recalling \eqref{(1.6)} we obtain
$$
\left|L_{B}\left[\Dt\vp\right](v,v_*)\right|\le C_{\vp}\int_{0}^{\pi}
b(\cos\theta)\sin^N\theta\, \Big(\langle v\rangle^{\gm+2-\gm\ld(\theta)}
+\langle v_*\rangle^{\gm+2-\gm\ld(\theta)}\Big){\rm
  d}\theta \,.
$$
So for all $t>0$ (using Fubini's theorem and \eqref{(4.29)})
\begin{eqnarray}\label{(4.3*)}
&&\intt_{\mathbb{R}^N \times \mathbb{R}^N}
\left|L_{B}\left[\Dt\vp\right](v,v_*)\right| {\rm d}F_{t}^n(v){\rm
d}F_{t}^n(v_*)\\  \nonumber &&\le C_{\vp}\|F_0\|_0\int_{0}^{\pi}
b(\cos\theta)\sin^N\theta\,\|F^n_t\|_{\gm+2-\gm\ld(\theta)} {\rm d}\theta\\
 &&  \nonumber \le C_{\vp,F_0}\int_{0}^{\pi}
b(\cos\theta)\sin^N\theta\, \left(1+\frac1t\right)^{1-\ld(\theta)} {\rm
  d}\theta\,.
\end{eqnarray}
Thus for all $t_1, t_2\in[0,\infty)$ we compute (assuming $t_1<t_2$)
\begin{eqnarray} \label{(4.30)}
  && \int_{t_1}^{t_2}{\rm d}t\intt_{\mathbb{R}^N \times \mathbb{R}^N}
  \left|L_{B}\left[\Dt\vp\right](v,v_*)\right| {\rm
      d}F_{t}^n(v){\rm
      d}F_{t}^n(v_*) \\ \nonumber
&&\le C_{\vp,F_0}\int_{0}^{\pi}
    b(\cos\theta)\sin^N\theta\,{\rm d}\theta \left(1+t_2-t_1
    \right)^{1-\ld(\theta)}\int_{0}^{t_2-t_1} t^{\ld(\theta)-1} {\rm
      d}t \\ \nonumber
&&= C_{\vp,F_0}\int_{0}^{\pi}
    b(\cos\theta)\sin^N\theta\,(1+|\log(\sin\theta)|)\left(1+t_2-t_1
    \right)^{1-\ld(\theta)} (t_2-t_1)^{\ld(\theta)} {\rm
      d}\theta
    \\ \nonumber
    &&=:C_{\vp,F_0}\Og(t_2-t_1).
\end{eqnarray}

Since
\[
\left|\left\langle Q(F^n_t, F^n_t),\vp\right\rangle\right|\le
\intt_{\mathbb{R}^N \times
\mathbb{R}^N}\left|L_{B}\left[\Dt\vp\right](v,v_*)\right| {\rm
d}F_{t}^n(v){\rm d}F_{t}^n(v_*)\,,
\]
it follows that
\begin{eqnarray*}
\sup_{n\ge 1}\Big|\int_{\mathbb{R}^N}\vp{\rm
d}F_{t_2}^n-\int_{\mathbb{R}^N}\vp{\rm d}F_{t_1}^n \Big|&\le& \sup_{n\ge
1}\Big|\int_{t_1}^{t_2}|\langle Q(F^n_t, F^n_t),\vp\rangle| {\rm d}t\Big|\\
&\le& C_{\vp,F_0}\Og(|t_2-t_1|)\to 0
\end{eqnarray*}
as $|t_1-t_2|\to 0$.  We then deduce for any $\psi\in C_c(\mathbb{R}^N)$ that
\begin{equation}
\Ld_{\psi}(\dt):=\sup_{|t_1-t_2|\le \dt;\, n\ge 1}
\left|\int_{\mathbb{R}^N}\psi{\rm d}F^n_{t_1} -\int_{\mathbb{R}^N}\psi{\rm
d}F^n_{t_2}\right| \to 0\quad {\rm as}\quad \dt\to 0^{+}\,.
\label{(4.32)}
\end{equation}
Since $C_{c}(\mathbb{R}^N)$ is separated, it follows from a diagonal argument
that there is a subsequence of $\{n\}$ (independent of $t$), still denoted by
$\{n\}$, and a family $\{F_t\}_{t\ge 0}\subset {\mathcal
B}_2^{+}(\mathbb{R}^N)$, such that
\begin{equation}
\lim_{n\to\infty}\int_{\mathbb{R}^N}\psi{\rm d}F^n_t
=\int_{\mathbb{R}^N}\psi{\rm d}F_t \qquad \forall\, t\ge 0\,,\quad \forall\,\psi\in
C_c(\mathbb{R}^N)\,.
\label{(4.33)}
\end{equation}

Using \eqref{(4.29)} and the fact that $F^n_t$ are conservative
solutions we have
\begin{equation}
\|F_t\|_2 \le \|F_0\|_2\,,\quad \|F_t\|_{s}\le C_{s}^*
\left(1+1/t\right)^{\fr{s-2}{\gm}}
\quad \forall\, t>0\,,\quad \forall\,s\ge 2\,.
\label{(4.34)}
\end{equation}
Also by \eqref{(4.33)} and \eqref{(4.32)} we have
$$\Big|\int_{\mathbb{R}^N}\psi{\rm d}F_{t_1}-\int_{\mathbb{R}^N}\psi{\rm
d}F_{t_2}\Big|\le \Ld_{\psi}(|t_1-t_2|)\,.$$ Hence
\begin{equation}
t\mapsto \int_{\mathbb{R}^N}\psi{\rm d}F_{t} \quad {\rm is \,\,\,
continuous\,\,\, on }\quad [0,\infty)\quad \forall\,\psi\in
C_c(\mathbb{R}^N)\,.
\label{(4.35)}
\end{equation}

We now prove that $F_t$ is a measure weak solution of Eq.~\eqref{(B)}. Given
any $\vp\in C^2_b(\mathbb{R}^N)$, by \eqref{(4.34)} we see that the derivation
of \eqref{(4.3*)} holds also for $F_t$ and so
$$\intt_{\mathbb{R}^N \times \mathbb{R}^N}
\left|L_{B}\left[\Dt\vp\right](v,v_*)\right| {\rm d}F_{t}(v){\rm
d}F_{t}(v_*)<\infty\quad \forall\, t>0 \,.
$$
Next by Proposition \ref{prop2.1} the function $(v,v_*)\mapsto
L_B[\Dt\vp](v,v_*)$ is continuous on $\mathbb{R}^N \times
\mathbb{R}^N$ and
\begin{equation}\label{(4.36)}
\fr{\left|L_B\left[\Dt\vp\right](v,v_*)\right|}{\langle v\rangle^{s}+\langle
v_*\rangle^{s}} \le C_{\vp}A_{2} \fr{|v-v_*|^{2+\gm}}{\langle v\rangle^{s}
+\langle v_*\rangle^{s}}\to
0\quad (|v|^2+|v_*|^2\to \infty)
\end{equation}
for all $s>2+\gm$.  Thus by using
\eqref{(4.29)}-\eqref{(4.33)}-\eqref{(4.36)}, Propositions
\ref{prop2.1} and \ref{prop2.2} we have
\begin{equation}
\left\langle Q(F^n_t, F^n_t),\vp \right\rangle \to \left\langle Q(F_t,
F_t),\vp \right\rangle\quad
(n\to\infty)\qquad \forall\,t>0\,.
\label{(4.37)}
\end{equation}
Similarly by using \eqref{(4.34)}-\eqref{(4.35)}, Propositions \ref{prop2.1}
and \ref{prop2.2} we conclude that
\begin{equation}
t\mapsto  \langle Q(F_t, F_t),\vp\rangle \quad {\rm is \,\,\,
continuous\,\,\, in }\quad (0,\infty)\,.
\label{(4.38)}
\end{equation}

Note that the derivation of \eqref{(4.30)} also holds for $F_t$ and hence we
have for all $T\in(0,\infty)$
\begin{equation}
  \int_{0}^{T}{\rm d}\tau\intt_{\mathbb{R}^N \times \mathbb{R}^N}
 \left|L_{B}\left[\Dt\vp\right](v,v_*)\right|
  {\rm d}F_{t}(v){\rm d}F_{t}(v_*) \le C_{\vp,F_0}\Og(T)<\infty\,.
\label{(4.39)}
\end{equation}
Thus
$$
t\mapsto  \left\langle Q(F_t, F_t),\vp \right\rangle \quad {\rm belongs
\,\,\,to }\quad C((0,\infty))\cap L^1_{\rm loc}([0,\infty))\,.
$$
And it also follows from \eqref{(4.30)}-\eqref{(4.37)} and the
dominated convergence theorem that for all $t>0$ we have
$$
\int_{0}^t \left\langle Q(F^n_{\tau}, F^n_{\tau}),\vp \right\rangle {\rm
  d}\tau \to \int_{0}^t\left\langle
Q(F_{\tau}, F_{\tau}),\vp\right\rangle {\rm d}\tau \quad (n\to\infty)\,.
$$
Thus in the integral equation of measures solutions $F^n_t$, letting
$n\to\infty$ gives
$$
\int_{\mathbb{R}^N}\vp{\rm d}F_t= \int_{\mathbb{R}^N}\vp{\rm d}F_0+ \int_{0}^t
\left \langle Q(F_{\tau}, F_{\tau}),\vp \right \rangle {\rm d}\tau\,\quad
\forall\,t> 0\,.
$$

We have proved that $F_t$ satisfies the conditions (i)-(ii) in the Definition
1.1 of measure weak solutions. So $F_t$ is a measure weak solution of
Eq.~\eqref{(B)} associated with the initial datum $F_0$. Finally from the
moment estimates in \eqref{(4.34)} and Step 1 we conclude that the solution
$F_t$ conserves mass, momentum and energy, and satisfies the moment production
estimates \eqref{(1.12)}-\eqref{(1.13)}. This completes the proof of
Theorem~\ref{theo1}.

\section {Uniqueness and stability for angular cutoff: Proof of
  Theorem \ref{theo2}}
\label{sec5}

This section is devoted to the proof of Theorem \ref{theo2}. We shall first
prove some lemmas on how the sign decomposition of measures behaves with time
integration and with the action of the collision operator.

\subsection{Sign decomposition of measures}
\label{sec:sign-decomp-meas}

As usual we denote $${\mathcal
  B}(\mathbb{R}^N)={\mathcal B}_0(\mathbb{R}^N),\quad
\|\mu\|=\|\mu\|_0=|\mu|(\mathbb{R}^N).$$
For any $\mu\in{\mathcal B}(\mathbb{R}^N)$, let $\mu^{+},\mu^{-}$ be the positive and
negative parts of $\mu$, i.e. $\mu^{\pm}=\fr{1}{2}(|\mu|\pm \mu)$. Let
$h:\mathbb{R}^N\to \mathbb{R}$ be the Borel function satisfying $|h(v)|\equiv 1$ such
that ${\rm d}\mu=h{\rm d}|\mu|$. We may call $h$ the sign function of $\mu$.
Then ${\rm d}\mu^+=\fr{1}{2}(1+h){\rm d}\mu.$ So for any $\mu,\nu\in {\mathcal
  B}(\mathbb{R}^N)$, we have
\begin{equation}
|\mu-\nu|=\nu-\mu+2(\mu-\nu)^{+}\,.
\label{(5.2)}
\end{equation}

Let us now prove that this sign decomposition behaves well with the
time integration.

\begin{lemma}[Sign decomposition and time integration] \label{lem5.1}
Let $\mu_t\in C([a,\infty); {\mathcal
    B}(\mathbb{R}^N)), \nu_a\in {\mathcal B}(\mathbb{R}^N)$, and
$$\nu_t=\nu_{a}+\int_{a}^{t} \mu_{s}{\rm d}s\,,\quad t\ge a, $$
and let $v\mapsto h_t(v)$ be the sign function of the measure $\nu_t$
and let $\kappa_t=(1+h_t)/2$ so that ${\rm d}\nu_t^{+}=\kappa_t
{\rm d}\nu_t$.

Then for any bounded Borel function $\psi$ on $\mathbb{R}^N$, the
functions
\[
t\mapsto
\int_{\mathbb{R}^N} \psi{\rm d}\mu_{t}\, , \quad
t\mapsto \int_{\mathbb{R}^N} \psi{\rm
  d}|\mu_{t}| \quad \mbox{ and }\quad  t\mapsto \int_{\mathbb{R}^N} \psi{\rm
  d}\mu_{t}^{+}
\]
 all belong to $L^1_{{\rm loc}}([a,\infty))$ and for
any $t\in[a,\infty)$ we have
\begin{equation}
\int_{\mathbb{R}^N}\psi{\rm d}\nu_t=\int_{\mathbb{R}^N}\psi{\rm d}\nu_a+
\int_{a}^{t}{\rm d}s\int_{\mathbb{R}^N} \psi{\rm
  d}\mu_{s}\,,
\label{(5.3)}
\end{equation}
\begin{equation}
\int_{\mathbb{R}^N}\psi{\rm d}|\nu_t|=\int_{\mathbb{R}^N}\psi {\rm d}|\nu_a|+
\int_{a}^{t} {\rm d}s\int_{\mathbb{R}^N}\psi h_s{\rm d}\mu_{s}
\,,
\label{(5.4)}
\end{equation}
\begin{equation}
\int_{\mathbb{R}^N}\psi{\rm d}\nu_t^{+}=\int_{\mathbb{R}^N}\psi{\rm d}\nu_a^{+}+
\int_{a}^{t}{\rm d}s\int_{\mathbb{R}^N} \psi{\kappa}_s{\rm
  d}\mu_{s}\,.
\label{(5.5)}
\end{equation}

\end{lemma}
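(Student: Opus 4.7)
The plan is to establish \eqref{(5.3)} directly from the definition of the measure-valued integral, then reduce \eqref{(5.4)} to a combination of \eqref{(5.3)} and \eqref{(5.5)}, so that the real work lies in proving \eqref{(5.5)}.

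For \eqref{(5.3)}, the continuity of $s\mapsto\mu_s$ in total variation makes $s\mapsto\|\mu_s\|$ continuous, hence bounded on each compact subinterval; combined with $|\int\psi\,{\rm d}\mu_s|\le(\sup|\psi|)\,\|\mu_s\|$, this gives the local integrability of $s\mapsto\int\psi\,{\rm d}\mu_s$, and the identity itself is immediate from $\nu_t(E)=\nu_a(E)+\int_a^t\mu_s(E)\,{\rm d}s$ extended from indicators to bounded Borel $\psi$ by a monotone-class argument. Once \eqref{(5.5)} is established, \eqref{(5.4)} follows by linearity: since $|\nu_t|=2\nu_t^+-\nu_t$ and $h_s=2\kappa_s-1$, subtracting \eqref{(5.3)} from twice \eqref{(5.5)} yields \eqref{(5.4)}, and the local integrability of $\int\psi h_s\,{\rm d}\mu_s$ and $\int\psi\kappa_s\,{\rm d}\mu_s$ follows from the same total-variation bound.

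The main work is \eqref{(5.5)}. By linearity in $\psi$ we may assume $\psi\ge 0$. For such $\psi$ we use the variational characterization
\[
\int\psi\,{\rm d}\nu^+=\sup\left\{\int\psi\varphi\,{\rm d}\nu:\varphi\ {\rm Borel},\ 0\le\varphi\le 1\right\},
\]
attained at $\varphi=\kappa$. Testing with $\varphi=\kappa_s$ and with $\varphi=\kappa_t$, and applying \eqref{(5.3)} to the bounded Borel functions $\psi\kappa_s$ and $\psi\kappa_t$, yields the two-sided bound
\[
\int_s^t\!\!\int\psi\kappa_s\,{\rm d}\mu_r\,{\rm d}r\;\le\;F(t)-F(s)\;\le\;\int_s^t\!\!\int\psi\kappa_t\,{\rm d}\mu_r\,{\rm d}r,\qquad F(\tau):=\int\psi\,{\rm d}\nu_\tau^+,
\]
valid for $a\le s<t$. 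Moreover $F$ is Lipschitz on compact subintervals since $\|\nu_t^+-\nu_s^+\|\le\|\nu_t-\nu_s\|\le\int_s^t\|\mu_r\|\,{\rm d}r$, hence absolutely continuous. Telescoping the sandwich over a refining sequence of partitions $\{a=t_0^n<\dots<t_{k_n}^n=t\}$ gives
\[
\int_a^t\!\!\int\psi\kappa_r^{n,L}\,{\rm d}\mu_r\,{\rm d}r\;\le\;F(t)-F(a)\;\le\;\int_a^t\!\!\int\psi\kappa_r^{n,R}\,{\rm d}\mu_r\,{\rm d}r,
\]
where $\kappa_r^{n,L}(v):=\kappa_{t_i^n}(v)$ and $\kappa_r^{n,R}(v):=\kappa_{t_{i+1}^n}(v)$ for $r\in[t_i^n,t_{i+1}^n)$.

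The main obstacle is then to pass to the limit $n\to\infty$ and recover $\int_a^t\int\psi\kappa_r\,{\rm d}\mu_r\,{\rm d}r$ on both sides. This requires a jointly Borel-measurable choice of $(r,v)\mapsto\kappa_r(v)$, produced by a measurable version of the Hahn decomposition for the TV-continuous family $\{\nu_r\}_{r\ge a}$, together with a convergence argument for the step approximants. The delicate point is that $r\mapsto\kappa_r(v)$ need not be continuous in $r$ for fixed $v$; however, the set of times $r$ at which the Hahn decomposition of $\nu_r$ oscillates on a $|\mu_r|$-nonnegligible subset of $\mathbb{R}^N$ has Lebesgue measure zero, so that off this exceptional set the approximants coincide with $\kappa_r$ for sufficiently fine partitions and the convergence closes by dominated convergence, identifying both bounds with $\int_a^t\int\psi\kappa_r\,{\rm d}\mu_r\,{\rm d}r$ and completing the proof.
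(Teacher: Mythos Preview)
Your sandwich inequality is correct and is in fact the heart of the paper's argument too. The genuine gap is in your final paragraph: the claim that the step approximants $\kappa_r^{n,L}$, $\kappa_r^{n,R}$ converge to $\kappa_r$ in a sense strong enough to pass the limit is not justified, and the sentence about ``the set of times $r$ at which the Hahn decomposition of $\nu_r$ oscillates on a $|\mu_r|$-nonnegligible subset has Lebesgue measure zero'' is both vague and unproven. Hahn decompositions are highly non-unique and there is no reason a measurable selection $r\mapsto\kappa_r$ should be approximated $|\mu_r|$-a.e.\ by $\kappa_{t_i^n}$ as the partition refines; you would in effect need a continuity-in-$r$ statement for the sign that does not hold in general. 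Even if you could show the upper and lower telescoped sums squeeze together, that alone would not identify their common limit with $\int_a^t\!\int\psi\,\kappa_r\,{\rm d}\mu_r\,{\rm d}r$.

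The fix is to abandon telescoping and instead differentiate, which is precisely the paper's route. You already observed that $F(\tau)=\int\psi\,{\rm d}\nu_\tau^+$ is locally Lipschitz, hence differentiable for a.e.\ $t_0$. At each such $t_0$ apply your own sandwich twice. With $s=t_0$ and $t\downarrow t_0$ the lower bound uses the \emph{fixed} competitor $\kappa_{t_0}$ and gives
\[
F'(t_0)\;\ge\;\lim_{t\to t_0^+}\frac{1}{t-t_0}\int_{t_0}^{t}\!\!\int\psi\,\kappa_{t_0}\,{\rm d}\mu_r\,{\rm d}r\;=\;\int\psi\,\kappa_{t_0}\,{\rm d}\mu_{t_0},
\]
using only the total-variation continuity of $r\mapsto\mu_r$. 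With $t=t_0$ and $s\uparrow t_0$ the upper bound likewise uses the fixed $\kappa_{t_0}$ and gives $F'(t_0)\le\int\psi\,\kappa_{t_0}\,{\rm d}\mu_{t_0}$. Hence $F'(t_0)=\int\psi\,\kappa_{t_0}\,{\rm d}\mu_{t_0}$ a.e., and the fundamental theorem of calculus for absolutely continuous functions yields \eqref{(5.5)}. The paper carries out exactly this argument for $|\nu_t|$ (i.e.\ for \eqref{(5.4)}) with the sign function $h_t$ in place of $\kappa_t$; your reduction of \eqref{(5.4)} to \eqref{(5.3)} and \eqref{(5.5)} is just the mirror image of the paper's reduction of \eqref{(5.5)} to \eqref{(5.3)} and \eqref{(5.4)}. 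So you had all the ingredients; only the last move needed to be differentiation at a fixed time rather than a partition limit.
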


\begin{proof}[Proof of Lemma~\ref{lem5.1}]
  Since the half-sum of \eqref{(5.3)} and \eqref{(5.4)} is equal to
  \eqref{(5.5)}, we only have to prove \eqref{(5.3)} and
  \eqref{(5.4)}. The proof of \eqref{(5.3)} is easy and similar to
  that of \eqref{(5.4)}. By simple function approximation, the proof
  of \eqref{(5.4)} can be reduced to the proof of that for any Borel
  set $E\subset \mathbb{R}^N$, $t\mapsto \int_{E}h_t{\rm d}\mu_t$
  belongs to $L^1_{{\rm loc}}([a,\infty))$ (and so does $t\mapsto
  \int_{\mathbb{R}^N}\psi h_t{\rm d}\mu_t$ for any bounded Borel
  function $\psi$ on $\mathbb{R}^N$) and
\begin{equation}
|\nu_t|(E)=|\nu_a|(E)+
\int_{a}^{t}{\rm d}s\int_{E} h_s{\rm d}\mu_{s}\,,\quad
t\in[a,\infty)\,.
\label{(5.6)}
\end{equation}

By assumption on $\mu_t$, the strong derivative $\fr{{\rm d} }{{\rm
d}t}\nu_t=\mu_t$ exists, and
\[
\|\nu_{t_1}-\nu_{t_2}\|\le \int_{t_1}^{t_2} \|\mu_s\|ds\qquad \forall \, a\le
t_1\le t_2<\infty\,.
\]
This implies that for any Borel set $E\subset \mathbb{R}^N$, $t\mapsto
|\nu_t|(E)$ is Lipschitz on every bounded interval $[a,T]\subset [a,\infty)$:
For all $a\le t_1\le t_2\le T$
\begin{eqnarray*}
  && \left|
    \left|\nu_{t_1}\right|(E)-\left|\nu_{t_2}\right|(E)\right|\le
  |\nu_{t_1}-\nu_{t_2}|(E) \le \int_{t_1}^{t_2} \|\mu_s\|\,{\rm d}s\le
  C_{T}|t_1-t_2|
\end{eqnarray*}
and so $t\mapsto |\nu_t|(E)$ is differentiable for almost every
$t\in[a,\infty)$ and satisfies
$$
|\nu_t|(E)=|\nu_a|(E)+\int_{a}^{t}\fr{{\rm d}}{{\rm d}s}|\nu_s|(E) {\rm
d}s\,\quad \forall\, t\in[a,\infty)\,.
$$
Therefore in order to prove \eqref{(5.6)} we only have to show that for
every Borel set $E\subset\mathbb{R}^N$
\begin{equation}
\fr{{\rm d} }{{\rm d}t}|\nu_t|(E)=\int_{E}h_t {\rm d}\mu_t\,, \quad {\rm
a.e.}\quad t\in[a,\infty)
\label{(5.7)}
\end{equation}
which also implies that
 $t\mapsto \int_{E}h_t{\rm
d}\mu_t$ belongs to $L^1_{{\rm loc}}([a,\infty))$.

For any $t, s\in[a,\infty)$, using
\[
|\nu_{s}|(E)=\int_{E}{\rm d}|\nu_{s}|\ge \int_{E}h_t{\rm d}\nu_s
\]
we have
\begin{equation}\label{(5.8)}
|\nu_{s}|(E) -|\nu_t|(E)\ge \int_{E}h_t{\rm
  d}(\nu_s-\nu_t)\,.
\end{equation}

Now take any $t\in(a,\infty)$ such that the derivative $ \fr{{\rm
    d}}{{\rm d}t}|\nu_t|(E)$ exists.  By \eqref{(5.8)} we have
\begin{eqnarray*}
&&s>t\,\Longrightarrow\,\fr{|\nu_{s}|(E)
  -|\nu_t|(E)}{s-t}\ge \int_{E}h_t{\rm d}\Big( \fr{\nu_s-\nu_t}{s-t}\Big)\,,\\
&& s<t \,\Longrightarrow \,\fr{|\nu_{s}|(E) -|\nu_t|(E)}{s-t}\le
\int_{E}h_t{\rm d}\Big( \fr{\nu_s-\nu_t}{s-t}\Big)\,.
\end{eqnarray*}
Since $(\nu_s-\nu_t)/(s-t)\to \mu_t\,(s\to t)$ in norm $\|\cdot\|$,
it follows that
$$\fr{{\rm d}}{{\rm d}t}|\nu_t|(E)=\lim_{s\to t} \fr{|\nu_{s}|(E)
-|\nu_t|(E)}{s-t}=\int_{E}h_t{\rm d}\mu_t\,.$$ This proves \eqref{(5.7)} and
completes the proof.
\end{proof}

Let us now prove that the sign decomposition on differences of product
measures preserves the invariance by exchanging $v$ and $v_*$.
\begin{lemma}[Sign decomposition and exchange of particles]\label{lem5.3}
  For any $\mu,\nu\in {\mathcal B}_s^{+}(\mathbb{R}^N) $ ($s\ge 0$)
  and any locally bounded Borel function $\psi\in
  L^{\infty}_{-s}(\mathbb{R}^N \times \mathbb{R}^N)$ we have
\begin{equation}
\intt_{\mathbb{R}^N \times \mathbb{R}^N} \psi(v,v_*){\rm d}(\mu\otimes \mu-\nu\otimes\nu) = \intt_{\mathbb{R}^N \times \mathbb{R}^N}
\psi(v_*,v){\rm d}(\mu\otimes \mu-\nu\otimes\nu)\,,
\label{(5.14)}
\end{equation}
\begin{equation}
\intt_{\mathbb{R}^N \times \mathbb{R}^N} \psi(v,v_*){\rm d}|\mu\otimes
\mu-\nu\otimes\nu| = \intt_{\mathbb{R}^N \times \mathbb{R}^N}
\psi(v_*,v){\rm d}|\mu\otimes \mu-\nu\otimes\nu|\,,
\label{(5.15)}
\end{equation}
\begin{equation}
  \intt_{\mathbb{R}^N \times \mathbb{R}^N} \psi(v,v_*)
  {\rm d}(\mu\otimes \mu-\nu\otimes\nu)^{+} =
  \intt_{\mathbb{R}^N \times \mathbb{R}^N} \psi(v_*,v)
  {\rm d}(\mu\otimes \mu-\nu\otimes\nu)^{+}
  \,.
\label{(5.16)}
\end{equation}

\end{lemma}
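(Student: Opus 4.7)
The plan is to reduce everything to a single symmetry property of $\lambda:=\mu\otimes\mu-\nu\otimes\nu$ under the flip map
\[
T:\mathbb{R}^N\times\mathbb{R}^N\to\mathbb{R}^N\times\mathbb{R}^N,\qquad T(v,v_*)=(v_*,v),
\]
which is a measurable involution ($T\circ T=\mathrm{Id}$). Then \eqref{(5.14)}--\eqref{(5.16)} will all follow from the change-of-variables formula $\int\psi\,{\rm d}(T_*\rho)=\int\psi\circ T\,{\rm d}\rho$ for any signed Borel measure $\rho$ and any $\rho$-integrable $\psi$.

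First I would observe that for any positive Borel measure $\rho\in {\mathcal B}^{+}_s(\mathbb{R}^N)$, the product measure $\rho\otimes\rho$ is $T$-invariant, i.e.\ $T_*(\rho\otimes\rho)=\rho\otimes\rho$. This is the content of Fubini's theorem: for every bounded Borel $\psi$,
\[
\intt \psi(v_*,v)\,{\rm d}\rho(v)\,{\rm d}\rho(v_*)=\intt \psi(v,v_*)\,{\rm d}\rho(v)\,{\rm d}\rho(v_*)
\]
(and the assumption $\mu,\nu\in{\mathcal B}^{+}_{s}$ together with $\psi\in L^{\infty}_{-s}(\mathbb{R}^N\times\mathbb{R}^N)$ ensures absolute integrability since $\langle v\rangle^s+\langle v_*\rangle^s$ bounds the integrand). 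Applying this to $\rho=\mu$ and $\rho=\nu$ and subtracting, I conclude $T_*\lambda=\lambda$, which is exactly \eqref{(5.14)}.

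Next, for \eqref{(5.15)}, the key point is that pushforward along a measurable bijection commutes with the total variation: for any signed Borel measure $\rho$ and any measurable bijection $T$ with measurable inverse, $|T_*\rho|=T_*|\rho|$. This follows from the Hahn decomposition $\rho=\rho^+-\rho^-$ (supported on disjoint Borel sets $P,M$), the fact that $T(P)$ and $T(M)$ are again a Hahn decomposition for $T_*\rho$, and the uniqueness of the Jordan decomposition. Combining with $T_*\lambda=\lambda$ yields $T_*|\lambda|=|\lambda|$, and applying change of variables gives \eqref{(5.15)}.

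Finally, for \eqref{(5.16)}, I would use the identity $\lambda^{+}=\tfrac{1}{2}(|\lambda|+\lambda)$, so that
\[
T_*\lambda^{+}=\tfrac{1}{2}\bigl(T_*|\lambda|+T_*\lambda\bigr)=\tfrac{1}{2}\bigl(|\lambda|+\lambda\bigr)=\lambda^{+}
\]
and one more application of the change-of-variables formula gives the identity. The only nontrivial step is the stability $|T_*\rho|=T_*|\rho|$ under pushforward by a measurable involution; this is classical but should be recorded explicitly, as the rest of the argument is a direct consequence of Fubini and the Jordan decomposition.
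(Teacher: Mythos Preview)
Your proof is correct and follows the same overall architecture as the paper: establish \eqref{(5.14)} by Fubini (i.e.\ $T_*\lambda=\lambda$ for the flip $T$), derive \eqref{(5.16)} from \eqref{(5.14)} and \eqref{(5.15)} via $\lambda^{+}=\tfrac12(|\lambda|+\lambda)$, and treat \eqref{(5.15)} as the only nontrivial step.

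The one genuine difference is in how \eqref{(5.15)} is obtained. You invoke the abstract fact that pushforward by a bi-measurable bijection commutes with total variation, $|T_*\rho|=T_*|\rho|$, which you justify via the Hahn decomposition. The paper instead argues directly with the polar decomposition: writing ${\rm d}\lambda=h\,{\rm d}|\lambda|$ with $|h|\equiv1$, it applies \eqref{(5.14)} to $\psi h$ to get
\[
\intt \psi(v,v_*)\,{\rm d}|\lambda|=\intt \psi(v,v_*)h(v,v_*)\,{\rm d}\lambda=\intt \psi(v_*,v)h(v_*,v)\,{\rm d}\lambda\le \intt \psi(v_*,v)\,{\rm d}|\lambda|
\]
(for $\psi\ge0$), and then swaps the roles of $(v,v_*)$ to obtain the reverse inequality. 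Your route is cleaner conceptually and reusable; the paper's route is slightly more self-contained in that it avoids naming the Hahn sets and the uniqueness of the Jordan decomposition. Either way the content is the same.
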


\begin{proof}[Proof of Lemma~\ref{lem5.3}]
  Equality \eqref{(5.14)} easily follows from Fubini's
  theorem. Equality \eqref{(5.16)} follows from \eqref{(5.15)} and the
  relation
$$
{\rm d}(\mu\otimes \mu-\nu\otimes\nu)^{+}=\fr{1}{2}\Big({\rm d} |\mu\otimes
\mu-\nu\otimes\nu|+{\rm d}(\mu\otimes \mu-\nu\otimes\nu)\Big)\,.
$$
So we only have to prove \eqref{(5.15)}. To do this we split $\psi$ as
$\psi=\psi^{+}-(-\psi)^{+}$ so that we can assume that $\psi\ge
0$. Let $h(v,v_*)$ be the sign function of the measure $\mu\otimes
\mu-\nu\otimes\nu$.  Then applying \eqref{(5.14)} to $\psi(v,v_*)h(v,v_*)$ we
have
\begin{eqnarray*}
  &&\intt_{\mathbb{R}^N \times \mathbb{R}^N}
  \psi(v,v_*){\rm d}|\mu\otimes \mu-\nu\otimes \nu| =\intt_{\mathbb{R}^N
    \times \mathbb{R}^N} \psi(v,v_*)h(v,v_*){\rm d}(\mu\otimes
  \mu-\nu\otimes\nu)
  \\
  &&=\intt_{\mathbb{R}^N \times \mathbb{R}^N} \psi(v_*,v)h(v_*,v){\rm
    d}(\mu\otimes \mu-\nu\otimes\nu)\le\intt_{\mathbb{R}^N \times
    \mathbb{R}^N} \psi(v_*,v){\rm d}|\mu\otimes
  \mu-\nu\otimes\nu|\,. \end{eqnarray*} Replacing $\psi(v,v_*)$ with
$\psi(v_*,v)$ we also obtain the reversed inequality. This proves
\eqref{(5.15)}.
\end{proof}

Finally let us prove a signed estimate on the collision operator.

\begin{lemma} \label{lem5.4} Let $B(z,\sg)$ be given by
  \eqref{(1.B)}-\eqref{(1.2)}-\eqref{(1.3)} with $b(\cdot)$ satisfying {\bf
    (H4)}. Let $\mu\in {\mathcal B}_{2+\gm}^{+}(\mathbb{R}^N)\,, \nu\in
  {\mathcal B}_{2\gm}^{+}(\mathbb{R}^N)$, and let $h(v)$ be the sign function of
  $\mu-\nu$ and let $\kappa=\fr{1}{2}(1+h)$ so that $\kappa {\rm
    d}(\mu-\nu)={\rm d}(\mu-\nu)^{+}$.  Then for any $\vp\in C_b(\mathbb{R}^N)$
  satisfying $0\le \vp(v)\le \langle v\rangle^2$ we have
\begin{eqnarray}\label{(5.A1)} &&
\int_{\mathbb{R}^N}\vp(v) \kappa(v){\rm d} (Q(\mu,\mu)-Q(\nu,\nu))(v)\\
\nonumber &&\le
E_{\vp}+2^{\gm/2}A_0\Big(\|\mu\|_{2+\gm}\|\mu-\nu\|_{0}+\|\mu\|_{2}\|\mu-\nu\|_{\gm}
\Big)\end{eqnarray} where
$$ E_{\vp}=
A_02^{\gm}\|\mu\|_{\gm}\int_{\mathbb{R}^N}(\langle v\rangle^2-\vp(v))\langle
v\rangle^{\gm} {\rm d}\mu(v). $$
\end{lemma}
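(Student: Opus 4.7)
The plan is to exploit the bilinearity identity
\[
Q(\mu,\mu)-Q(\nu,\nu)=Q(\mu,\mu-\nu)+Q(\mu-\nu,\nu),
\]
and to split each piece into its gain and loss components, using throughout the positivity $\kappa(v)\,d(\mu-\nu)(v)=d(\mu-\nu)^+(v)$.

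First, directly from the definition of the loss operator,
\[
\int_{\mathbb{R}^N}\vap\kappa\,dQ^-(\mu-\nu,\nu)=\iint A(v-v_*)\vap(v)\,d(\mu-\nu)^+(v)\,d\nu(v_*)\ge 0,
\]
so this term enters $I:=\int\vap\kappa\,d(Q(\mu,\mu)-Q(\nu,\nu))$ with a nonpositive sign and may be discarded in the upper bound. For the remaining loss contribution, taking absolute values and using $\vap\kappa\le\langle v\rangle^2$ yields
\[
-\int\vap\kappa\,dQ^-(\mu,\mu-\nu)\le\iint A(v-v_*)\langle v\rangle^2\,d\mu(v)\,d|\mu-\nu|(v_*),
\]
which by the elementary bound $|v-v_*|^\gm\le 2^{\gm/2}(\langle v\rangle^\gm+\langle v_*\rangle^\gm)$ produces exactly $2^{\gm/2}A_0(\|\mu\|_{2+\gm}\|\mu-\nu\|_0+\|\mu\|_2\|\mu-\nu\|_\gm)$, matching the second target term.

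The remaining ingredient is the combined gain contribution
\[
\int\vap\kappa\,dQ^+(\mu,\mu-\nu)+\int\vap\kappa\,dQ^+(\mu-\nu,\nu)=\int\vap\kappa\,d(Q^+(\mu,\mu)-Q^+(\nu,\nu)),
\]
which I would treat by writing $\vap\kappa=\langle\cdot\rangle^2\kappa-(\langle\cdot\rangle^2-\vap)\kappa$. The $\langle\cdot\rangle^2\kappa$-part I would handle via the $v\leftrightarrow v_*$ symmetry of $d\mu\otimes d\mu$ and $d\nu\otimes d\nu$, replacing $\langle v'\rangle^2\kappa(v')$ by the symmetric expression $\tfrac12(\langle v'\rangle^2\kappa(v')+\langle v_*'\rangle^2\kappa(v_*'))\le\tfrac12(\langle v\rangle^2+\langle v_*\rangle^2)$ via the kinetic-energy conservation $\langle v'\rangle^2+\langle v_*'\rangle^2=\langle v\rangle^2+\langle v_*\rangle^2$; the resulting $\sg$-independent integrand then combines with the corresponding part of the loss operator via the general identity $\int\langle v\rangle^2\,dQ(\rho,\rho)=0$. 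The defect part produced by $(\langle v\rangle^2-\vap)\kappa\le\langle v\rangle^2-\vap$ I would bound crudely using $|v-v_*|^\gm\le 2^\gm\langle v\rangle^\gm\langle v_*\rangle^\gm$, yielding precisely $E_\vap=A_0 2^\gm\|\mu\|_\gm\int(\langle v\rangle^2-\vap)\langle v\rangle^\gm\,d\mu$.

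The main obstacle will be carrying out the cancellation in the $\langle\cdot\rangle^2\kappa$-part cleanly so that the final bound contains only moments $\|\mu\|_0,\|\mu\|_\gm,\|\mu\|_2,\|\mu\|_{2+\gm}$ of $\mu$ together with the two low moments $\|\mu-\nu\|_0,\|\mu-\nu\|_\gm$ of $\mu-\nu$, with no moments of $\nu$ and no higher moments of $\mu-\nu$ appearing; this bookkeeping relies on exploiting both the positivity $\kappa\,d(\mu-\nu)=d(\mu-\nu)^+$ and the energy conservation $\int\langle v\rangle^2\,dQ(\rho,\rho)=0$ simultaneously at every step.
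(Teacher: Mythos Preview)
Your treatment of the loss terms is essentially correct and mirrors what the paper does after its cancellation step. The genuine gap is in the gain part.

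The symmetrization step you propose, replacing $\langle v'\rangle^2\kappa(v')$ by $\tfrac12(\langle v'\rangle^2\kappa(v')+\langle v_*'\rangle^2\kappa(v_*'))$ and then invoking the \emph{inequality} $\le \tfrac12(\langle v\rangle^2+\langle v_*\rangle^2)$, is only legitimate when you integrate against a \emph{positive} measure. You are integrating against ${\rm d}(\mu\otimes\mu-\nu\otimes\nu)$, which is signed; on the $\nu\otimes\nu$ side the inequality points the wrong way. Your sentence ``the resulting $\sg$-independent integrand then combines with the corresponding part of the loss operator'' cannot rescue this, since you have already discarded one loss term and bounded the other --- there is no loss piece left to absorb the $\langle\cdot\rangle^2\kappa$ contribution. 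Likewise, your proposed defect bound would produce $\int(\langle v\rangle^2-\vp)\langle v\rangle^{\gm}\,{\rm d}\nu$ rather than the same integral against ${\rm d}\mu$, and $\nu$ is only assumed to lie in ${\mathcal B}_{2\gm}^+$, so this need not even be finite.

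The paper's device for the gain part is to first use $L_B[\vp\kappa]\ge 0$ to replace ${\rm d}(\mu\otimes\mu-\nu\otimes\nu)$ by its \emph{positive part} ${\rm d}(\mu\otimes\mu-\nu\otimes\nu)^+$; Lemma~\ref{lem5.3} shows this positive part is still invariant under $v\leftrightarrow v_*$, so \emph{now} the symmetrization plus the energy inequality are valid. One then writes $\langle v\rangle^2=\vp+(\langle v\rangle^2-\vp)$: for the defect one uses $(\mu\otimes\mu-\nu\otimes\nu)^+\le\mu\otimes\mu$ to obtain exactly $E_\vp$; for the $\vp$ piece one uses $(\mu\otimes\mu-\nu\otimes\nu)^+\le \mu\otimes(\mu-\nu)^+ + (\mu-\nu)^+\otimes\nu$. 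The second of these two terms then cancels \emph{exactly} against the corresponding piece $\iint A(v-v_*)\vp(v)\,{\rm d}(\mu-\nu)^+(v)\,{\rm d}\nu(v_*)$ in the (exact) expansion of $I^{(-)}$, and this cancellation is precisely what removes all $\nu$-moments from the final estimate. Without passing through $(\mu\otimes\mu-\nu\otimes\nu)^+$ you will not be able to arrange this cancellation, and the crude bilinear bounds on the gain will always leave either a $\|\mu-\nu\|_2$ or a high moment of $\nu$ that you cannot control.
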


 \begin{proof}[Proof of Lemma~\ref{lem5.4}] Since $\vp$ is bounded, there is no problem of
integrability in the following derivation. For instance we can write
\begin{eqnarray}\label{(5.AA)}
\int_{\mathbb{R}^N}\vp(v) \kappa(v){\rm d} \Big(Q(\mu,\mu)-Q(\nu,\nu)\Big)(v)
=I^{(+)}-I^{(-)}\end{eqnarray}
where \begin{eqnarray*}&&I^{(+)}=
\intt_{\mathbb{R}^N\times\mathbb{R}^N} L_B[\vp\kappa](v,v_*){\rm d}(\mu\otimes
\mu-\nu\otimes \nu) ,\\
&& I^{(-)}=\intt_{\mathbb{R}^N\times\mathbb{R}^N}A(v-v_*)\vp(v)\kappa(v){\rm
d}(\mu\otimes \mu-\nu\otimes \nu).\end{eqnarray*}
By definition of $B(v-v_*,\sg)$ and
$\vp(v)\kappa(v)\le \langle v\rangle^2$ we have
\begin{eqnarray*}&& L_B[\vp\kappa](v,v_*)+L_B[\vp\kappa](v_*,v)\\
&&\le \int_{\mathbb{S}^{N-1}}B(v-v_*,\sg)(\langle v'\rangle^2+\langle
v_*'\rangle^2)
 d\sg
=A(v-v_*)(\langle v\rangle^2+\langle v_*\rangle^2). \end{eqnarray*} Then
using ${\rm d}(\mu\otimes \mu-\nu\otimes \nu)\le {\rm d}(\mu\otimes
\mu-\nu\otimes \nu)^{+}$ and Lemma \ref{lem5.3} we compute
\begin{eqnarray*} I^{(+)} &\le &
\fr{1}{2}\intt_{\mathbb{R}^N\times \mathbb{R}^N}(
L_B[\vp\kappa](v,v_*)+L_B[\vp\kappa](v_*,v) ){\rm d}(\mu\otimes \mu-\nu\otimes
\nu)^{+} \\
&\le& \intt_{\mathbb{R}^N\times\mathbb{R}^N}A(v-v_*)\langle v\rangle^2{\rm
d}(\mu\otimes \mu-\nu\otimes\nu)^{+}\,.\end{eqnarray*} Since $A(v-v_*)\le
A_02^{\gm}\langle v\rangle^{\gm}\langle v_*\rangle^{\gm}$, $\langle
v\rangle^2-\vp(v)\ge 0$, and $(\mu\otimes \mu-\nu\otimes\nu)^{+}\le \mu\otimes
\mu$, it follows that
\begin{eqnarray*}&&\intt_{\mathbb{R}^N\times\mathbb{R}^N}A(v-v_*)(\langle
v\rangle^2-\vp(v)){\rm d}(\mu\otimes
\mu-\nu\otimes\nu)^{+}\\
&& \le A_02^{\gm}\intt_{\mathbb{R}^N\times\mathbb{R}^N}\langle
v\rangle^{\gm}\langle v_*\rangle^{\gm}(\langle
v\rangle^2-\vp(v)){\rm d}(\mu\otimes \mu)\\
&&= A_02^{\gm}\|\mu\|_{\gm}\int_{\mathbb{R}^N}\langle v\rangle^{\gm}(\langle
v\rangle^2-\vp(v)){\rm d}\mu(v)=E_{\vp}\,.  \end{eqnarray*}  Therefore using
$${\rm d}(\mu\otimes \mu-\nu\otimes
\nu)^{+}(v,v_*) \le {\rm d}\mu(v){\rm d}(\mu-\nu)^{+}(v_*)+{\rm
d}(\mu-\nu)^{+}(v){\rm d}\nu(v_*)$$ we have
\begin{eqnarray}\label{(5.A2)} I^{(+)} &\le&
E_{\vp}+\intt_{\mathbb{R}^N\times\mathbb{R}^N}A(v-v_*)\vp(v){\rm d}\mu(v)
{\rm d}(\mu-\nu)^{+}(v_*)
\\ \nonumber
&+&\intt_{\mathbb{R}^N\times\mathbb{R}^N}A(v-v_*)\vp(v){\rm
d}(\mu-\nu)^{+}(v) {\rm d}\nu(v_*)\,.\quad \quad
\end{eqnarray}
Similarly using ${\rm d}(\mu\otimes \mu-\nu\otimes \nu)(v,v_*) ={\rm
d}\mu(v){\rm d}(\mu-\nu)(v_*)+{\rm d}(\mu-\nu)(v){\rm d}\nu(v_*)$ and \\
$\kappa(v){\rm d}(\mu-\nu)(v)= {\rm d}(\mu-\nu)^{+}(v)$  we have
\begin{eqnarray}\label{(5.A3)}
I^{(-)}&=&\intt_{\mathbb{R}^N\times \mathbb{R}^N}
A(v-v_*)\vp(v)\kappa(v){\rm d}\mu(v){\rm d}(\mu-\nu)(v_*)\\
\nonumber &+& \intt_{\mathbb{R}^N\times \mathbb{R}^N}A(v-v_*)\vp(v){\rm
d}(\mu-\nu)^{+}(v){\rm d}\nu(v_*).\qquad \qquad \qquad  \qquad \end{eqnarray}
Canceling the common term in \eqref{(5.A2)} and \eqref{(5.A3)} and noticing
that
\[ {\rm d}(\mu-\nu)^{+}(v_*)\le {\rm d}(\mu-\nu)(v_*)+ {\rm
  d}|\mu-\nu|(v_*)
\]
we obtain from \eqref{(5.AA)},\eqref{(5.A2)},\eqref{(5.A3)} that
\begin{eqnarray}\label{(5.A4)}
&&\int_{\mathbb{R}^N}\vp(v)\kappa(v) {\rm d}(Q(\mu,\mu)-Q(\nu,\nu))\\
\nonumber && \le E_{\vp} +\intt_{\mathbb{R}^N\times
\mathbb{R}^N}A(v-v_*)\vp(v){\rm d}\mu(v) {\rm d}|\mu-\nu|(v_*)\,.
\end{eqnarray}
Since $A(v-v_*)\vp(v)\le A_02^{\gm/2}(\langle v\rangle^{\gm}+\langle
v_*\rangle^{\gm})\langle v\rangle^2$, it follows that
$$\intt_{\mathbb{R}^N\times \mathbb{R}^N}A(v-v_*)\vp(v){\rm d}\mu(v)
{\rm d}|\mu-\nu|(v_*)\le A_0
2^{\gm/2}(\|\mu\|_{2+\gm}\|\mu-\nu\|_{0}+\|\mu\|_2\|\mu-\nu\|_{\gm})$$ which
together with \eqref{(5.A4)} proves \eqref{(5.A1)}.\end{proof}

\subsection{Proof of Theorem~\ref{theo2}}
We shall consider each part step by step.

\subsubsection*{Proof of part (a)} Recall that
$B(z,\sg)=|z|^{\gm}b(\cos\theta)$ satisfies $A_0<\infty$ \,and
$0<\gm\le 2$. Let $F_{t}$ be a conservative measure weak solution of
Eq.~\eqref{(B)} with $F_{t}|_{t=0}=F_0\in {\mathcal
  B}_2^{+}(\mathbb{R}^N)$. We prove that $F_t$ is a measure strong
solution.

  First of all by $\|F_t\|_0,\, \|F_t\|_{\gm}\le
  \|F_0\|_2$ and Proposition \ref{prop1.4} we have
$$\|Q^{\pm}(F_t, F_t)\|_0\le 4A_0\|F_0\|_{2}^2 \, , \quad \forall\, t\ge 0,$$
$$
 \langle Q(F_t, F_t),\,\vp\rangle=\int_{\mathbb{R}^N}\vp{\rm
   d}Q(F_t,F_t)\quad \forall\, \vp\in C^2_b(\mathbb{R}^N),\quad \forall\,t\ge 0.$$
   Since
$$
t\mapsto \int_{\mathbb{R}^N}\vp{\rm d}Q(F_t,F_t)=\langle Q(F_t,
F_t),\,\vp\rangle \,\,\,{\rm  belongs \,\,\,to}\,\,\, C((0,\infty))\cap
L^1_{{\rm loc}}([0,\infty))$$ there is no problem of integrability and the
integral equation for a measure weak solutions becomes
\begin{equation}
\int_{\mathbb{R}^N}\vp{\rm d}F_t=
\int_{\mathbb{R}^N}\vp{\rm d}F_0+\int_{0}^{t}{\rm d}s\int_{\mathbb{R}^N}\vp{\rm
d}Q(F_s,F_s) \,.
\label{(5.20)}
\end{equation}
Now take any $\vp\in C_c^2(\mathbb{R}^N)$ satisfying $\|\vp\|_{L^{\infty}}\le
1$. We have
\begin{equation*}
\Big|\int_{\mathbb{R}^N}\vp{\rm d}Q(F_t,F_t)\Big| \le \|Q(F_t,F_t)\|_0\le 8
A_0\|F_0\|_2^2\, , \quad \forall\,t\ge
0\,.
\end{equation*}
and thus using \eqref{(5.20)}, for all $0\le t_1<t_2<\infty$
\begin{eqnarray*}
&&\left|\int_{\mathbb{R}^N}\vp {\rm d}(F_{t_2}-F_{t_1}) \right|\le
\int_{t_1}^{t_2}\Big|\int_{\mathbb{R}^N}\vp{\rm d}Q(F_s,F_s)\Big|ds \le
8A_0\|F_0\|^2_2|t_1-t_2|\,.
\end{eqnarray*}
Applying \eqref{(5.1)} this gives
\begin{equation}
\|F_{t_1}-F_{t_2}\|_{0}\le 8A_0\|F_0\|^2_2|t_1-t_2|\,,\quad \forall\,
t_1,t_2\in[0,\infty)
\label{(5.22)}
\end{equation}
which enables us to prove the strong continuity:
\begin{equation}
t\mapsto F_t\in C([0,\infty); {\mathcal B}_2(\mathbb{R}^N)),\quad t\mapsto
Q^{\pm}(F_t,F_t)\in C([0,\infty); {\mathcal
  B}_0(\mathbb{R}^N))\,.
\label{(5.23)}
\end{equation}
In fact applying the inequality \eqref{(5.10)} in Proposition \ref{prop1.4}
with $s=0$ (recall that $0<\gm \le 2$) we have
\begin{eqnarray}\label{(5.A5)} \|Q^{\pm}(F_{t},F_{t})-Q^{\pm}(F_{t_0},F_{t_0})\|_0\le 8A_0\|F_0\|_2
\|F_t-F_{t_0}\|_2,\quad t,t_0\ge 0.\end{eqnarray} Fix $t_0\in[0,\infty)$.
Using \eqref{(5.2)}, the conservation of mass and energy, ${\rm
d}(F_{t_0}-F_t)^{+}\le {\rm d}F_{t_0}$, and \eqref{(5.22)} we have for any
$R\ge 1$
\begin{eqnarray*}
\|F_t-F_{t_0}\|_2&=&2\int_{\mathbb{R}^N}\langle v\rangle^2 {\rm
d}(F_{t_0}-F_t)^{+}(v)
\\
&\le& 2R^2\int_{\langle v\rangle \le R} {\rm d}(F_{t_0}-F_t)^{+}(v)+
2\int_{\langle
v\rangle > R}\langle v\rangle^2 {\rm d}F_{t_0}(v)\\
&\le&  2^{4}A_0R^2|t-t_0|+2\int_{\langle v\rangle > R}\langle v\rangle^2 {\rm
d}F_{t_0}(v)\,.
\end{eqnarray*}
Thus first letting $t\to t_0$ and then letting $R\to\infty$ leads to
$\lim\sup\limits_{t\to t_0}\|F_t-F_{t_0}\|_2=0\,.$ This together with \eqref
{(5.A5)} proves \eqref{(5.23)}.

From the strong continuity in \eqref{(5.23)} we have for all $\vp\in
C^2_b(\mathbb{R}^N)$
$$
\int_{0}^{t}{\rm d}s\int_{\mathbb{R}^N}\vp{\rm d}Q(F_{s},F_{s})
=\int_{\mathbb{R}^N}\vp{\rm
  d}\left(\int_{0}^{t}Q(F_{s},F_{s})ds\right)
$$
which together with \eqref{(5.20)} yields
$$
\int_{\mathbb{R}^N}\vp{\rm d}F_t= \int_{\mathbb{R}^N}\vp{\rm
  d}F_0+\int_{\mathbb{R}^N}\vp{\rm
  d}\left(\int_{0}^{t}Q(F_{s},F_{s})ds\right)\,.$$ Therefore applying
\eqref{(5.1)} we obtain
$$F_t=F_0+\int_{0}^{t}Q(F_{s},F_{s}){\rm
d}s\,,\quad t\ge 0\,.
$$
Since $t\mapsto Q^{\pm}(F_{t},F_{t})\in C([0,\infty);{\mathcal
B}_0(\mathbb{R}^N))$, it follows  that $t\mapsto F_t\in
C^1([0,\infty);{\mathcal
  B}_0(\mathbb{R}^N)) $ and
\[
\fr{{\rm d}}{{\rm d}t}F_t=Q(F_{t},F_{t}),\quad t\ge 0\,.
\]
So $F_t$ is a measure strong solution.

The converse is obvious because of (\ref{def-strong-bis}) and
(\ref{(5.9)}) with $s=0$: Every measure strong solution is a measure
weak solution.  \medskip

\subsubsection*{Proof of parts (b)-(c)-(d)} The proof of these three
parts can be reduced to the proof of the following lemma:

\begin{lemma}\label{lem:interm}
Let  $F_0\in {\mathcal B}_2^{+}(\mathbb{R}^N)$ with
  $\|F_0\|_0\neq 0$ and let $F_t$ be a conservative measure strong solution
  of Eq.~\eqref{(B)} with the initial datum
 $F_0$ and satisfy the moment production estimate \eqref{(1.12)}-\eqref{(1.12*)} in Theorem
  \ref{theo1}. Let $G_t$ be any measure strong solution of
  Eq.~\eqref{(B)} on the time interval $[\tau,\infty)$ with initial
  data
  \[
  G_t|_{t=\tau}=G_{\tau}\in {\mathcal B}_2^{+}(\mathbb{R}^N)
  \]
  for some $\tau\ge 0$, and satisfying $\|G_t\|_2\le \|G_{\tau}\|_2$
  for all $t\in[\tau,\infty)$.

  Then the stability estimates \eqref{(1.21)} (for $\tau=0$) and
  \eqref{(1.21*)} (for $\tau>0$) hold true.
\end{lemma}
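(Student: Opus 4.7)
The plan is to derive a differential inequality for $\|F_t-G_t\|_2$ via the sign-decomposition machinery of Subsection~5.1 together with Lemma~\ref{lem5.4}, and close it by Gronwall's inequality; the case $\tau=0$ requires an additional cutoff argument to tame the singular behavior of $\|F_s\|_{2+\gm}$ near $s=0$. The starting point is that $F_t$ conserves energy by hypothesis while $\|G_t\|_2\le\|G_\tau\|_2$, so combining these with mass conservation (for both) and the identity $|\mu-\nu|=-(\mu-\nu)+2(\mu-\nu)^+$ from \eqref{(5.2)} yields
\[
\|F_t-G_t\|_2-\|F_\tau-G_\tau\|_2\le 2\int\langle v\rangle^2\,{\rm d}(F_t-G_t)^+-2\int\langle v\rangle^2\,{\rm d}(F_\tau-G_\tau)^+.
\]
Applying Lemma~\ref{lem5.1} with $\nu_s=F_s-G_s$ and $\mu_s=Q(F_s,F_s)-Q(G_s,G_s)$ against a smooth truncation $\varphi_R(v)=\langle v\rangle^2\chi(v/R)\in C^2_b(\mathbb{R}^N)$, then invoking Lemma~\ref{lem5.4} with $(\mu,\nu)=(F_s,G_s)$ and passing $R\to\infty$ (so that the error term $E_{\varphi_R}(s)$ vanishes by dominated convergence, using $\|F_s\|_{2+\gm}<\infty$ from \eqref{(1.12)} for $s>0$), I obtain, after bounding $\|F_s-G_s\|_0,\|F_s-G_s\|_\gm\le\|F_s-G_s\|_2$,
\[
\|F_t-G_t\|_2\le\|F_\tau-G_\tau\|_2+4A_0\int_\tau^t\bigl(\|F_s\|_{2+\gm}+\|F_s\|_2\bigr)\|F_s-G_s\|_2\,{\rm d}s.
\]

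For the case $\tau>0$, the moment production estimate \eqref{(1.12)} gives $\|F_s\|_{2+\gm}\le\mathcal{K}_{2+\gm}(F_0)(1+1/s)\le\mathcal{K}_{2+\gm}(F_0)(1+1/\tau)$ on $[\tau,\infty)$ and $\|F_s\|_2=\|F_0\|_2$; the prefactor is then majorized by $c_\tau$ and Gronwall's inequality yields \eqref{(1.21*)}. For the case $\tau=0$, the factor $\|F_s\|_{2+\gm}\sim 1/s$ is not integrable at $s=0$, so a direct Gronwall argument fails. I would instead split $\langle v\rangle^2=\varphi_R(v)+\psi_R(v)$ with $\varphi_R$ bounded and supported in $\{|v|\le 2R\}$, and $\psi_R$ a smooth tail supported in $\{|v|>R\}$, and control $\|F_t-G_t\|_2$ as the sum of the truncated part $\int\varphi_R\,{\rm d}|F_t-G_t|$ and the tail $\int\psi_R\,{\rm d}(F_t+G_t)$. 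The truncated part is handled by the preceding Gronwall machinery applied to the bounded test function $\varphi_R$, whose error term $E_{\varphi_R}$ is controlled by bounded moments only and scales polynomially in $R$. The tail is compared to $\int_{|v|>R}|v|^2\,{\rm d}F_0$ via the weak formulation tested against $\psi_R$: the elastic energy-conservation identity $|v'|^2+|v_*'|^2=|v|^2+|v_*|^2$ restricts the support of $\Dt\psi_R$ to $\{|v|^2+|v_*|^2\ge cR^2\}$, producing a remainder that can be estimated without invoking $\|F_s\|_{2+\gm}$. Balancing the three contributions with $R=r^{-1/3}$, where $r=\|F_0-G_0\|_2$, reproduces the three terms $r$, $r^{1/3}$ and $\int_{|v|>r^{-1/3}}|v|^2\,{\rm d}F_0(v)$ defining $\Psi_{F_0}(r)$, and \eqref{(1.21)} follows with constant $C=\mathcal{R}(\gm,A_0,A_2\|F_0\|_0,\|F_0\|_2)$.

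The main obstacle is the $\tau=0$ case, specifically the tail propagation step: establishing a bound on $\int_{|v|>R}\langle v\rangle^2\,{\rm d}F_t$ on a short interval $[0,T]$ that does \emph{not} appeal to the singular moment $\|F_s\|_{2+\gm}$. The elastic-conservation-induced support restriction of $\Dt\psi_R$ is the key structural ingredient making this possible, and it is what ultimately dictates the cutoff scale $R=r^{-1/3}$ at which the three pieces of $\Psi_{F_0}(r)$ are balanced.
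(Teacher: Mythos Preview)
Your argument for $\tau>0$ is correct and matches the paper's Step~2. The derivation of the master inequality
\[
\|H_t\|_2\le\|H_\tau\|_2+4A_0\int_\tau^t\bigl(\|F_s\|_{2+\gm}+\|F_s\|_2\bigr)\|H_s\|_2\,{\rm d}s
\]
via Lemmas~\ref{lem5.1}--\ref{lem5.4} and the truncation $\vp_n\to\langle v\rangle^2$ is also the paper's Step~1.

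For $\tau=0$, however, your splitting $\langle v\rangle^2=\varphi_R+\psi_R$ has two genuine gaps. First, applying Lemma~\ref{lem5.4} to the bounded cutoff $\varphi_R$ does \emph{not} avoid the singular factor $\|F_s\|_{2+\gm}$: that factor appears in the main term of \eqref{(5.A1)} regardless of $\varphi$, and the error $E_{\varphi_R}=A_0 2^\gm\|F_s\|_\gm\int(\langle v\rangle^2-\varphi_R)\langle v\rangle^\gm\,{\rm d}F_s$ is a tail moment of order $2+\gm$, not ``bounded moments only''. Second, your tail term $\int\psi_R\,{\rm d}(F_t+G_t)$ requires controlling the tail of $G_t$, but $G$ need only satisfy $\|G_t\|_2\le\|G_0\|_2$---no energy conservation, no higher moments---so the weak-formulation argument you sketch is unavailable for $G$.

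The paper avoids both problems by two asymmetric moves. For the tail, it uses $(H_t)^+\le F_t$ so only the tail of $F_t$ enters; this is then bounded on $[0,r]$ via energy conservation and the crude $Q^-$ estimate $\int_{|v|\le R}\langle v\rangle^2\,{\rm d}Q^-(F_s,F_s)\le 2A_0\|F_0\|_2^2 R^2$ (Step~4). For the singular integral, the paper does \emph{not} bound $\|H_s\|_0\le\|H_s\|_2$ in \eqref{(5.A6)}; instead it keeps $\|H_s\|_0$ separate and uses the non-singular Gronwall \eqref{(5.A8)} (which involves only $\|F_s+G_s\|_\gm\le C_0$) to estimate $\int_r^t s^{-1}\|H_s\|_0\,{\rm d}s\le r|\log r|+C\int_0^t\|H_s\|_2|\log s|\,{\rm d}s$. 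The $|\log s|$ weight is integrable, so Gronwall closes; the $r|\log r|$ term is absorbed into $r^{1/3}$. This interplay between the $\|\cdot\|_0$ and $\|\cdot\|_2$ estimates is the missing structural ingredient in your sketch.
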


Note that the existence of such a solution $F_t$ as in the statement has been
proven by Theorem \ref{theo1} and part (a) of the present theorem.  Therefore
if Lemma~\ref{lem:interm} holds true, then by taking $G_0=F_0$ (for the case
$\tau=0$) we get $G_t\equiv F_t$ on $[0,\infty)$ and hence this proves parts
(b), (c) and (d).

\begin{proof}[Proof of Lemma~\ref{lem:interm}]
  Our proof is divided into several steps. First of all for notation
  convenience we denote
$$H_t=F_t-G_t. $$

\noindent{\bf Step 1.} Given any $0<r\in[\tau,\infty)$. We prove that
\begin{eqnarray}\label{(5.A6)} &&
\|H_t\|_{2}\le \|G_\tau\|_2-\|F_\tau\|_2+2\|(H_r)^+\|_2\\ \nonumber && +
4A_0\left({\mathcal K}_{2+\gm}(F_0) \int_{r}^{t}(1+1/s)\|H_s\|_{0}{\rm d}s+
\|F_0\|_2 \int_{r}^{t}\|H_s\|_{\gm}{\rm d}s\right)\,, \quad t\ge r\,.
\end{eqnarray}
Here ${\mathcal K}_{2+\gm}(F_0)$ is the constant in
 \eqref{(1.12*)} with $s=2+\gm$.
 To prove \eqref{(5.A6)}, we consider approximation: By ${\rm
d}|H_t|={\rm d}G_t-{\rm d}F_t+2{\rm d}(H_t)^{+}$ we have
$$\|H_t\|_{2}=\|G_t\|_2-\|H_t\|_2+2\lim_{n\to\infty}
\int_{\mathbb{R}^N}\langle{v}\rangle^2_n {\rm d}(H_t)^{+}\quad{\rm
  with}\quad \langle{v}\rangle^2_n=\min\{ \langle v\rangle^2,\, n\}.$$ Let $
v\mapsto h_t(v)$ be the sign function of $H_t$ and
$\kappa_t(v)=\fr{1}{2}(1+h_t(v))$ so that $\kappa_t{\rm d}H_t={\rm
  d}(H_t)^{+}$. Then applying Lemma \ref{lem5.1} to the measure
\[
H_t=H_r+\int_{r}^t ( Q(F_s,F_s)-Q(G_s,G_s)){\rm d}s \quad
\mbox{ for } \quad t\ge r
\]
and then using Lemma \ref{lem5.4} we have \begin{eqnarray*}&&
  \int_{\mathbb{R}^N}\langle{v}\rangle^2_n {\rm d}(H_t)^{+}=
  \int_{\mathbb{R}^N}\langle{v}\rangle^2_n {\rm
    d}(H_r)^{+}+\int_{r}^{t}{\rm
    d}s\int_{\mathbb{R}^N}\langle{v}\rangle^2_n\kappa_s(v)
  {\rm d}\Big(Q(F_s, F_s)-Q(G_s, G_s)\Big)\\
  &&\le
  \|(H_r)^{+}\|_2+E_n(t)+2A_0\Big(\int_{r}^{t}\|F_{s}\|_{2+\gm}\|H_{s}\|_{0}{\rm
    d}s + \|F_0\|_2\int_{r}^{t}\|H_{s}\|_{\gm}{\rm d}s\Big),\quad
  t\in[r, \infty)\end{eqnarray*} where
$$E_n(t)=4A_0\int_{r}^{t}\|F_{s}\|_{\gm}\left(\int_{\mathbb{R}^N} (\langle
v\rangle^2-\langle v\rangle_n^2)\langle{v}\rangle^{\gm} {\rm d}F_s\right){\rm
d}s .$$ Since, by moment estimate \eqref{(1.12)},
$$\int_{r}^{t}\|F_{s}\|_{\gm}\left(\int_{\mathbb{R}^N}
\langle{v}\rangle^{2+\gm} {\rm d}F_s(v)\right){\rm d}s \le
\|F_0\|_2\int_{r}^{t}\|F_s\|_{2+\gm}{\rm d}s<\infty\,,\quad
t\in[r,\infty)\,,$$ it follows from dominated convergence that
$\lim\limits_{n\to\infty}E_n(t)=0$ and thus
 \begin{eqnarray*} \|H_t\|_{2}&\le &\|G_t\|_2-\|F_t\|_2+ 2\|(H_r)^{+}\|_2\\
&+&4A_0\Big(\int_{r}^{t}\|F_{s}\|_{2+\gm}\|H_{s}\|_{0}{\rm d}s +
\|F_{0}\|_{2}\int_{r}^{t}\|H_{s}\|_{\gm}{\rm d}s\Big)\,,\quad \forall\,t\in
[r,\infty)\,.\end{eqnarray*}  By assumption on $F_t$ and $G_t$ we have
$\|G_t\|_2-\|F_t\|_2\le \|G_\tau\|_2-\|F_\tau\|_2$ and $\|F_s\|_{2+\gm}\leq
{\mathcal K}_{2+\gm}(F_0)(1+1/s)$. This  proves \eqref{(5.A6)}.
\smallskip

\noindent{\bf Step 2.} Suppose $\tau>0$. Then taking $r=\tau$ in
\eqref{(5.A6)} and using
$\|G_\tau\|_2-\|F_\tau\|_2+2\|(H_\tau)^+\|_2=\|H_{\tau}\|_2$ we obtain
\[
 \|H_t\|_{2}\le \|H_\tau\|_2+
c_{\tau}\int_{\tau}^{t}\|H_s\|_2{\rm d}s\qquad \forall\, t\in[\tau,\infty)
\]
with $c_{\tau}=4A_0({\mathcal
  K}_{2+\gm}(F_0)+\|F_0\|_2)(1+\fr{1}{\tau})$. This gives
\eqref{(1.21*)} by Gronwall's Lemma.

The remaining steps deal with the case $\tau=0$ and prove
\eqref{(1.21)}.
\smallskip

\noindent{\bf Step 3.} If $\|H_0\|_2\ge 1$, then using $\|F_t\|_2=\|F_0\|_2,\,
\|G_t\|_2\le \|G_0\|_2$ we have
$$\|H_t\|_2\le (1+2\|F_0\|_2)\|H_0\|_2\qquad \forall\,
t\in[0, \infty)\,.$$ So in the following we assume that
$\|H_0\|_2<1\,.$ Note that in this case we
have \begin{equation}\label{(5.A7)}
\|F_t\pm G_t\|_{2}\le
  1+2\|F_0\|_2=:C_0\qquad \forall\, t\ge 0\,.
\end{equation}
Using Proposition \ref{prop1.4} we have
\begin{eqnarray*}
  \|H_t\|_{0}&\le &\|H_0\|_{0}+\int_{0}^{t}\|Q(F_s,F_s)-Q(G_s,G_s)\|_{0}{\rm d}s
  \\
 & \le& \| H_0\|_{0}+4A_0 \int_{0}^{t}
  \Big(\|F_s+G_s\|_{\gm}\|H_s\|_{0}+\|F_s+
  G_s\|_{0}\|H_s\|_{\gm}\Big){\rm d}s
\end{eqnarray*}
and thus by $0<\gm\le 2$ and \eqref{(5.A7)} we obtain
\begin{equation}\label{(5.A8)}
\|H_t\|_{0}\le \|H_0\|_{0}+8A_0C_0 \int_{0}^{t} \|H_s\|_{2}{\rm d}s\,, \quad
\forall\, t\ge 0.
\end{equation}
\smallskip

\noindent
{\bf Step 4.} Let $r>0$ satisfy $\|H_0\|_2\le r \le 1$.  We prove that
\begin{equation}\label{(5.A9)}
U(r):=\sup_{0\le t\le r}\|H_t\|_2 \le 4(1+9A_0C_0^2)\Psi_{F_0}(r)\,.
 \end{equation}
First of all using \eqref{(5.2)} and $\|G_t\|_2-\|F_t\|_2\le
\|G_0\|_2-\|F_0\|_2\le r$ we have
\begin{equation}\label{(5.A10)}
\|H_t\|_2=\|G_t\|_2-\|F_t\|_2+2\|(H_t)^{+}\|_2\le r+2\|(H_t)^{+}\|_2
 \end{equation}
and for any $R\ge 1$
\begin{equation}\label{(5.A11)}2\|(H_t)^{+}\|_2\le
4R^{2}\|H_t\|_{0}+2\int_{|v|> R}\langle v\rangle^2 {\rm d}F_t(v)\,.
 \end{equation}
 Next by $\|H_0\|_2\le r$ and
\eqref{(5.A8)} we have
 \begin{equation}\label{(5.A12)}
4R^{2}\|H_t\|_{0}\le 4(1+8A_0C_0^2)R^{2} \,r\,\quad \forall\, t\in [0,r].
 \end{equation}
Using the conservation of mass and energy we compute
\begin{eqnarray*}
\int_{|v|> R}\langle v\rangle^2{\rm d}F_t(v)&=&\int_{\mathbb{R}^N} \langle
v\rangle^2{\rm
d}F_t(v)-\int_{|v|\le  R}\langle v\rangle^2{\rm d}F_t(v)\\
&=&\int_{\mathbb{R}^N}\langle v\rangle^2{\rm d}F_0(v)- \int_{|v|\le R}\langle
v\rangle^2{\rm
d}F_0(v)- \int_{0}^{t}{\rm d}s\int_{|v|\le R}\langle v\rangle^2{\rm d} Q(F_s,F_s)\\
&\le& \int_{|v|>R}\langle v\rangle^2{\rm d}F_0(v)+\int_{0}^{t}{\rm
d}s\int_{|v|\le R}\langle v\rangle^2{\rm d} Q^{-}(F_s,F_s)\,.
\end{eqnarray*}
For the last term we use $|v-v_*|^{\gm}\le \langle v\rangle^{\gm} \langle
v_*\rangle^{\gm}\le \langle v\rangle^{2} \langle v_*\rangle^{2}$ to get for
all $t\in [0, r]$
\[
\int_{0}^{t}{\rm d}s\int_{|v|\le R}\langle v\rangle^2{\rm d} Q^{-}(F_s,F_s)
\le 2R^2 \int_{0}^{t}{\rm d}s\int_{\mathbb{R}^N}{\rm d} Q^{-}(F_s,F_s) \le
2A_0\|F_0\|_2^2 R^2\,r\,.
\]
Thus
\begin{equation}\label{(5.A13)}
\int_{|v|> R}\langle v\rangle^2{\rm d}F_t(v) \le \int_{|v|>R}\langle
v\rangle^2{\rm d}F_0(v) + 2A_0\,\|F_0\|_2^2R^{2}\,r\quad \forall\,
t\in[0,r]\,.
 \end{equation}
 Combining \eqref{(5.A11)}-\eqref{(5.A12)}-\eqref{(5.A13)}
gives
\begin{equation}\label{(5.A14)}
2\|(H_t)^{+}\|_2\le
4(1+9A_0C_0^2)R^{2}r + 4\int_{|v|>R}|v|^2{\rm d}F_0(v)\,,\quad t\in [0,r]\,.
 \end{equation}
Now choose $R=r^{-1/3}$. Then from \eqref{(5.A10)}, \eqref{(5.A14)}
 we obtain
\[
\|H_t\|_2\le  r+
4(1+9A_0C_0^2) r^{1/3} + 4\int_{|v|>r^{-1/3}}|v|^2{\rm d}F_0(v)\,,\quad t\in
[0,r]\,.
\]
This gives \eqref{(5.A9)} by definition of $\Psi_{F_0}(r)$ in
\eqref{(1.20)}.  \smallskip

\noindent{\bf Step 5.}  In the following we denote $C_i={\mathcal
  R}_i(\gm, A_0, A_2, \|F_0\|_0,\|F_0\|_2)$ for $(i=1,2,\dots,6)$,
where ${\mathcal R}_i(x_1,x_2, \dots, x_5)$ are some explicit positive
continuous functions in $(\mathbb{R}_{>0})^5$.

In \eqref{(5.A6)} setting $\tau=0, r=1$ we have
\[
\|H_t\|_{2}\le \|H_0\|_2+2\|H_1\|_2+ C_1 \int_{1}^{t}\|H_{s}\|_{2}{\rm
  d}s \,,\quad t\ge 1
\]
so that Gronwall's Lemma applies to get
\begin{equation}\label{(5.A15)}\|H_t\|_{2}\le
\Big(\|H_0\|_2+2\|H_1\|_2 \Big)\exp(C_1(t-1))
 \,,\quad t\ge 1\,.\end{equation}
Now we concentrate our estimate for $t\in[0,1]\,.$ In what follows we assume
$r$ satisfy
\begin{equation}\label{(5.A16)}
r>0\,,\quad \|H_0\|_2\le r< 1\,.
\end{equation}
 Using \eqref{(5.A6)} (with $\tau=0$),
$\|G_0\|_2-\|F_0\|_2\le \|H_0\|_2\le r$, and $\|H_r\|_2\le U(r)$ we have
$$ \|H_t\|_{2}\le r+2U(r)+ C_2\left(\int_{r}^{t}\fr{1}{s}\|H_{s}\|_{0}{\rm d}s
+\int_{r}^{t}\|H_{s}\|_{\gm}{\rm d}s\right), \quad t\in[r,1]\,.$$
Further, using \eqref{(5.A8)} we compute for all $t\in [r, 1]$
\begin{eqnarray*}\int_{r}^{t}\fr{1}{s}\|H_{s}\|_{0}{\rm d}s &\le&
r\log(t/r)+8A_0C_0\int_{r}^{t}\fr{1}{s}\int_{0}^{s} \|H_{\tau}\|_2 {\rm d}\tau{\rm d}s\\
&\le & r|\log r |+8A_0C_0\int_{0}^{t} \|H_{\tau}\|_{2}|\log \tau | {\rm
  d}\tau\,. \end{eqnarray*}
Thus for all $t\in [r,\,1]$
\begin{equation}\label{(5.A17)}
\|H_t\|_{2}\le r+2U(r)+ C_2 r|\log r| + C_3\int_{0}^{t} \|H_{s}\|_{2}(1+|\log
s|){\rm d}s\,. \end{equation}
 Since $\|H_t\|_{2}\le U(r)$ for all $t\in[0, r]$, the
inequality \eqref{(5.A17)} holds for all $t\in [0,1]$. Therefore by Gronwall's
Lemma we conclude
\begin{equation}\label{(5.A18)}
\|H_t\|_{2}\le C_4(r+U(r)+ r|\log r|\,)\qquad \forall\, t\in [0,\,1]\,.
\end{equation}
 In
particular taking $t=1$ yields the estimate for $\|H_1\|_{2}$ and thus from
\eqref{(5.A15)}-\eqref{(5.A16)} we obtain
\begin{equation}\label{(5.A19)}
\|H_t\|_{2}\le C_5(r+U(r)+r|\log r|\,)\exp(C_1(t-1))\,,\quad \forall\, t\in
[1,\infty)\,.
\end{equation}
 Combining \eqref{(5.A18)}-\eqref{(5.A19)}
and the inequality $r|\log r|\le r^{1/3}$ we conclude
\begin{equation}\label{(5.A20)}
  \|H_t\|_{2}\le
  \Psi_{F_0}(r)\exp(C_6(1+t))\quad \forall\, t\ge 0\,.
\end{equation}
Finally if $\|H_0\|_2=0$, then in \eqref{(5.A20)} letting $r\to 0+$
leads to $\|H_t\|_{2}\equiv 0$; if $\|H_0\|_2>0$, we take
$r=\|H_0\|_2$. This proves \eqref{(1.21)} and completes the proof of
the lemma.
\end{proof}
\medskip

\subsubsection*{Proof of part (e)} Let ${\rm d}F_0(v)=f_0(v){\rm d}v$
with $0\le f_0\in L^1_2(\mathbb{R}^N)$, and let $F_t$ be the unique
conservative measure strong solution of Eq.~\eqref{(B)} with the
initial datum $F_0$. By the Lebesgue-Radon-Nikodym theorem, for every
$t\ge 0$ we have a decomposition ${\rm d}F_t(v)=f_t(v){\rm d}v+{\rm
  d}\mu_t(v)$ where $0\le f_t\in L^1_2(\mathbb{R}^N)$, $\mu_t\in
{\mathcal B}_2^{+}(\mathbb{R}^N)$ and $\mu_t$ concentrates on a Lebesgue null
set. By the uniqueness of $F_t$ we can assume that $\|f_0\|_{L^1}\neq 0$.
Let
\[
f^n_0(v)=\min\{f_0(v),\, n\} e^{-|v|^2/n}\,, \ \mbox{ and } {\rm
  d}F_0^n(v)=f^n_0(v){\rm d}v\,.
\]
By Step 2 of the proof of Theorem \ref{theo1}, for every $n$ there is a
conservative measure weak solution $F^n_t$ with the initial datum $F^n_0$ and
${\rm d}F^n_t(v)=f^n_t(v){\rm d} v$, $0\le f^n_t\in L^1_2(\mathbb{R}^N)$ for
all $t\ge 0$.  By part (a), $F^n_t$ is also a measure strong solution. Since
${\rm d}(F_t-F^n_t)=(f_t-f^n_t){\rm d} v+{\rm d}\mu_t$ we have
$\|F_t-F^n_t\|_2= \|f_t-f^n_t\|_{L^1_2}+\|\mu_t\|_2$. Since
\begin{eqnarray*}
  \|F_0-F_0^n\|_2&=&\|f_0-f^n_0\|_{L^1_2} \\
  &\le& \int_{f_0(v)>n}f_0(v)\langle v\rangle^2{\rm
    d}v+\int_{\mathbb{R}^N}f_0(v)(1-e^{-|v|^2/n})\langle v\rangle^2{\rm d}v \to
  0\quad (n\to\infty)
\end{eqnarray*}
it follows from the stability estimate that for every fixed $t\ge 0$ we have
$$\|f_t-f^n_t\|_{L^1_2}+\|\mu_t\|_2=
\|F_t-F^n_t\|_2\le e^{C(1+t)}\Psi_{F_0}(\|F_0-F^n_0\|_2)\xrightarrow[n
  \to 0]{}0 $$
and therefore $\mu_t\equiv 0$. Thus ${\rm d}F_t(v)=f_t(v){\rm
  d}v$ for all $t\ge 0$ and hence $f_t$ is the unique conservative mild
solution of Eq.~\eqref{(B)} associated with the initial datum
$f_0$. This proves part (e). \medskip

\subsubsection*{Proof of part (f)} Suppose $F_0\in {\mathcal
  B}^{+}(\mathbb{R}^N)$ is not a Dirac mass. We can assume that
$\|F_0\|_0\neq 0$. Let $f^n_0(v)$ be defined by \eqref{(4.22)}-\eqref{(4.24)}
(the Mehler transform of $F_0$). By part (e), for every $n\ge 1$ there exists
a unique conservative $L^1$-solution $f^n_t$ of Eq.\eqref{(B)} associated with
the initial datum $f^n_t|_{t=0}=f^n_0$. If we define $F^n_0, F^n_t$ by ${\rm
  d}F^n_0(v)=f^n_0(v){\rm d}v$ and ${\rm d}F^n_t(v)=f^n_t(v){\rm d}v$,
then by uniqueness and Theorem \ref{theo1} we see that $F^n_t$ satisfies the
moment production estimates.  Thus it is easily checked that the Step 3 (where
there is no need of introducing $\wt{f}^n_0$ for the present case) in the
proof of Theorem \ref{theo1} is totally valid here. Therefore there is a
subsequence, which we still denote as $\{f^n_t\}_{n=1}^{\infty}$, such that
for the unique measure solution $F_t$ of Eq.~\eqref{(B)} with
$F_t|_{t=0}=F_0$, the weak convergence \eqref{(1.23)} holds true. This
completes the proof of Theorem \ref{theo2}.

\medskip\par\noindent\emph{Acknowledgements.} This work was started
while the first author was visiting the University Paris-Dauphine as
an invited professor during the autumn 2006, and the support of this
university is acknowledged. The first author also acknowledges support
of National Natural Science Foundation of China, Grant No. 10571101.

\par\noindent{\scriptsize\copyright\ 2011 by the authors. This paper
  may be reproduced, in its entirety, for non-commercial purposes.}

\bibliographystyle{acm}
\bibliography{Biblio-LM}

\signxl \signcm

\end{document}